\newtheorem{theorem}{Theorem}[section]
\newtheorem{lemma}[theorem]{Lemma}
\newtheorem{corollary}[theorem]{Corollary}
\theoremstyle{definition}
\newtheorem{definition}[theorem]{Definition}
\newtheorem{proposition}[theorem]{Proposition}
\theoremstyle{remark}
\newtheorem{remark}[theorem]{Remark}
\numberwithin{equation}{section}
\newcommand{\q}{\mathfrak{q}}
\newcommand{\HI}{\mathfrak{H}}
\newcommand{\C}{\mathbb{C}}
\newcommand{\B}{\mathcal{B}}
\newcommand{\qu}{\mathfrak{q}}
\newcommand{\pu}{\mathfrak{p}}
\newcommand{\oqu}{\overline{\mathfrak{q}}}
\newcommand{\quat}{\mathbb H}
\newcommand{\R}{\Bbb R}
\newcommand{\Z}{\mathbb Z}
\newcommand{\mc}{\mathcal}
\newcommand{\be}{\begin{equation}}
\newcommand{\en}{\end{equation}}
\newcommand{\D}{{\mc D}}
\newcommand{\FF}{\mc F}
\newcommand{\bedefin}{\begin{defi}}
	\newcommand{\findefi}{\end{defi} \medskip}
\newcommand{\betheo}{\begin{theorem}$\!\!${\bf \,\,\,}}
	\newcommand{\entheo}{\end{theorem}}
\newcommand{\enth}{\end{theorem}}
\newcommand{\becor}{\begin{cor}$\!\!${\bf .}}
	\newcommand{\encor}{\end{cor}}
\newcommand{\belem}{\begin{lem}$\!\!${\bf .}}
	\newcommand{\enlem}{\end{lem}}
\newcommand{\bea}{\begin{eqnarray}}
\newcommand{\ena}{\end{eqnarray}}
\newcommand{\beano}{\begin{eqnarray*}}
	\newcommand{\enano}{\end{eqnarray*}}
\newcommand{\bee}{\begin{enumerate}}
	\newcommand{\ene}{\end{enumerate}}
\newcommand{\bei}{\begin{itemize}}
	\newcommand{\eni}{\end{itemize}}
\newcommand{\betab}{\begin{tabular}}
	\newcommand{\entab}{\end{tabular}}
\newcommand{\Iop}{{\mathbb{I}_{V_{\mathbb{H}}^{R}}}}
\newcommand{\bfraka}{\mbox{\boldmath $\mathfrak a$}}
\newcommand{\bfrakb}{\mbox{\boldmath $\mathfrak b$}}
\newcommand{\bfrakp}{\mbox{ $\mathfrak p$}}
\newcommand{\bk}{\mathbf k}
\newcommand{\bi}{\mathbf i}
\newcommand{\bj}{\mathbf j}
\newcommand{\apo}{\sigma_{ap}^S}
\newcommand{\vr}{V_\quat^R}
\newcommand{\ur}{U_\quat^R}
\newcommand{\ra}{\text{ran}}
\newcommand{\kr}{\text{ker}}
\newcommand{\ind}{\text{ind}}
\newcommand{\cokr}{\text{coker}}
\newcommand{\Iopu}{\mathbb{I}_{\ur}}
\newcommand{\sel}{\sigma^S_{el}}
\newcommand{\ser}{\sigma^S_{er}}
\newcommand{\se}{\sigma_e^S}
\newcommand{\Wy}{\mathcal{W}}
\begin{document}
\title[Fredholm operator]{Fredholm operators and essential S-spectrum in the quaternionic setting}
\author{B. Muraleetharan$^{\dagger}$ and 
K. Thirulogasanthar$^{\ddagger}$}
\address{$^{\dagger}$ Department of mathematics and Statistics, University of Jaffna, Thirunelveli, Sri Lanka.}
\address{$^{\ddagger}$ Department of Computer Science and Software Engineering, Concordia University, 1455 De Maisonneuve Blvd. West, Montreal, Quebec, H3G 1M8, Canada.}
\email{bbmuraleetharan@jfn.ac.lk and santhar@gmail.com  
}
\subjclass{Primary 47A10, 47A53, 47B07}
\date{\today}
\date{\today}
\begin{abstract}
For bounded right linear operators, in a right quaternionic Hilbert space with a left multiplication defined on it, we study the approximate $S$-point spectrum. In the same Hilbert space, then we study the Fredholm operators and the Fredholm index. In particular, we prove the invariance of the Fredholm index under small norm operator and compact operator perturbations. Finally, in association with the Fredholm operators, we develop the theory of essential S-spectrum. We also characterize the $S$-spectrum in terms of the essential S-spectrum and Fredholm operators. In the sequel we study left and right S-spectrums as needed for the development of the theory presented in this note.
\end{abstract}
\keywords{Quaternions, Quaternionic Hilbert spaces, S-spectrum, Fredholm operator, Essential S-spectrum.}
\maketitle
\pagestyle{myheadings}
\section{Introduction}
In the complex theory Fredholm operators play an important role in the investigations of various classes of singular integral equations, in the theory of perturbations of Hermitian operators by Hermitian and non-Hermitian operators, and in obtaining a priori estimate in determining properties of certain differential operators \cite{Wil, Jeri, con, Goh, kub}.  Fredholm alternative theorem is used to derive an adjoint equation to the linear stability equations in Fluid dynamics, and useful in scattering of a 1-D particle on a small potential barrier, see  \cite{Jeri} and the many references therein.\\

In the complex case, studies of Fredholm theory and perturbation results are of a great importance in the description of the essential spectrum. In several applications, such as essential spectrum of the Schr\"odinger equations, essential spectrum of the perturbed Hamiltonians, in general, essential spectrum of the differential operators, essential spectrum of the transport operator (transport of the neutron, photons, molecules in gas, etc),  in particular, the time dependent transport equations arise in a number of different applications in Biology, Chemistry and Physics, information about it is important \cite{Ben, Goh, Jeri, Jeri2, Gar, Wil}. Further, near the essential spectrum, numerical calculations of eigenvalues become difficult. Hence they have to be treated analytically \cite{Jeri}.  There are several distinct definitions of the essential spectrum. However they all coincide for the self-adjoint operators on Hilbert spaces \cite{Goh, Ben}. In this note we only consider the essential spectrum associated with Fredholm operators in the quaternionic setting.\\

In the complex setting, in a Hilbert space $\HI$, a bounded linear operator, $A$, is not invertible if it is not bounded below (same is true in the quaternion setting, see theorem \ref{NT1} below). The set of approximate eigenvalues which are $\lambda\in\C$ such that $A-\lambda \mathbb{I}_\HI$, where $\mathbb{I}_{\HI}$ is the identity operator on $\HI$,  is not bounded below, equivalently, the set of $\lambda\in\C$ for which there is a sequence of unit vectors $\phi_1,\phi_2,\cdots$ such that $\displaystyle\lim_{n\rightarrow\infty}\|A\phi_n-\lambda\phi_n\|=0$. The set of approximate eigenvalues is known as the approximate spectrum. In the quaternionic setting, let $\vr$ be a separable right Hilbert space,  $A$ be a bounded right linear operator, and $R_\qu(A)=A^2-2\text{Re}(\qu)A+|\qu|^2\Iop$, with $\qu\in\quat$, the set of all quaternions, be the pseudo-resolvent operator, the set of right eigenvalues of $R_\qu(A)$ coincide with the point S-spectrum (see proposition 4.5 in \cite{ghimorper}). In this regard, it will be appropriate to define and study the quaternionic approximate S-point spectrum as the quaternions for which $R_\qu(A)$ in not bounded below.\\

Due to the non-commutativity, in the quaternionic case  there are three types of  Hilbert spaces: left, right, and two-sided, depending on how vectors are multiplied by scalars. This fact can entail several problems. For example, when a Hilbert space $\mathcal H$ is one-sided (either left or right) the set of linear operators acting on it does not have a linear structure. Moreover, in a one sided quaternionic Hilbert space, given a linear operator $A$ and a quaternion $\mathfrak{q}\in\quat$, in general we have that $(\mathfrak{q} A)^{\dagger}\not=\overline{\mathfrak{q}} A^{\dagger}$ (see \cite{Mu} for details). These restrictions can severely prevent the generalization  to the quaternionic case of results valid in the complex setting. Even though most of the linear spaces are one-sided, it is possible to introduce a notion of multiplication on both sides by fixing an arbitrary Hilbert basis of $\mathcal H$.  This fact allows to have a linear structure on the set of linear operators, which is a minimal requirement to develop a full theory. Thus, the framework of this paper, is in part, is a right quaternionic Hilbert space equipped with a left multiplication, introduced by fixing a Hilbert basis.\\

As far as we know, the Fredholm operator theory and the essential S-spectrum and the approximate S-point spectrum have not been studied in the quaternionic setting yet. In this regard, in this note we investigate the quaternionic S-point spectrum, Fredholm operators and associated S-essential spectrum for a bounded right linear operator on a right quaternionic separable Hilbert space. Since the pseudo-resolvent operator, $R_\qu(A)$ has real coefficients the left multiplication defined on a right quaternionic Hilbert space play a little role. Even the non-commutativity of quaternions does not play an essential role. Even though the S-approximate point spectrum, the S-essential spectrum and the Fredholm operators are structurally different from its complex counterparts, the results we obtain and their proofs are somewhat similar to those in the corresponding complex theory.\\

The article is organized as follows. In section 2 we introduce the set of quaternions and quaternionic Hilbert spaces and their bases, as needed for the development of this article, which may not be familiar to a broad range of audience. In section 3 we define and investigate, as needed, right linear operators and their properties. We have given proofs for some results which are not available in the literature. In section 3.1 we define a basis dependent left multiplication on a right quaternionic Hilbert space. In section 3.2 we deal with the right S-spectrum, left S-spectrum, S-spectrum and its major partitions. In section 4 we provide a systematic study of compact operators which has not yet been done in the literature. In section 5 we study the approximate S-point spectrum, $\sigma_{ap}^S(A)$, of a bounded right linear operator, $A,$ on a right quaternionic Hilbert space. In particular we prove that $\sigma_{ap}^S(A)$ is a non-empty closed subset of $\quat$ and the S-spectrum is the union of the $\sigma_{ap}^S(A)$ and the continuous S-spectrum. In section 6 we study the Fredholm operators and its index for a bounded right linear operator. In particular, we prove the invariance of the Fredholm index under small norm operator and compact operator perturbations.  In this section, since a quaternionic multiple of an operator is not involved, the proofs of these results are almost verbatim copies of the complex ones. In section 7, we study the essential S-spectrum as the S-spectrum of the quotient map image of a bounded right linear operator on the quaternionic version of the Calkin algebra and then characterize the S-essential spectrum in terms of Fredholm operators. We also establish the so-called Atkinson’s theorem, prove the invariance of the essential S-spectrum under compact perturbations, and give a characterization to S-spectrum in terms of Fredholm operators and its index (see proposition \ref{EP7}). Section 8 ends the manuscript with a conclusion.

\section{Mathematical preliminaries}
In order to make the paper self-contained, we recall some facts about quaternions which may not be well-known.  For details we refer the reader to \cite{Ad,ghimorper,Vis}.
\subsection{Quaternions}
Let $\quat$ denote the field of all quaternions and $\quat^*$ the group (under quaternionic multiplication) of all invertible quaternions. A general quaternion can be written as
$$\qu = q_0 + q_1 \bi + q_2 \bj + q_3 \bk, \qquad q_0 , q_1, q_2, q_3 \in \mathbb R, $$
where $\bi,\bj,\bk$ are the three quaternionic imaginary units, satisfying
$\bi^2 = \bj^2 = \bk^2 = -1$ and $\bi\bj = \bk = -\bj\bi,  \; \bj\bk = \bi = -\bk\bj,
\; \bk\bi = \bj = - \bi\bk$. The quaternionic conjugate of $\qu$ is
$$ \overline{\qu} = q_0 - \bi q_1 - \bj q_2 - \bk q_3 , $$
while $\vert \qu \vert=(\qu \overline{\qu})^{1/2} $ denotes the usual norm of the quaternion $\qu$.
If $\qu$ is non-zero element, it has inverse
$
\qu^{-1} =  \dfrac {\overline{\qu}}{\vert \qu \vert^2 }.$
Finally, the set
\begin{eqnarray*}
\mathbb{S}&=&\{I=x_1 \bi+x_2\bj+x_3\bk~\vert
~x_1,x_2,x_3\in\mathbb{R},~x_1^2+x_2^2+x_3^2=1\},
\end{eqnarray*}
contains all the elements whose square is $-1$. It is a $2$-dimensional sphere in $\mathbb H$ identified with $\mathbb R^4$.
\subsection{Quaternionic Hilbert spaces}
In this subsection we  discuss right quaternionic Hilbert spaces. For more details we refer the reader to \cite{Ad,ghimorper,Vis}.
\subsubsection{Right quaternionic Hilbert Space}
Let $V_{\quat}^{R}$ be a vector space under right multiplication by quaternions.  For $\phi,\psi,\omega\in V_{\quat}^{R}$ and $\qu\in \quat$, the inner product
$$\langle\cdot\mid\cdot\rangle_{V_{\quat}^{R}}:V_{\quat}^{R}\times V_{\quat}^{R}\longrightarrow \quat$$
satisfies the following properties
\begin{enumerate}
	\item[(i)]
	$\overline{\langle \phi\mid \psi\rangle_{V_{\quat}^{R}}}=\langle \psi\mid \phi\rangle_{V_{\quat}^{R}}$
	\item[(ii)]
	$\|\phi\|^{2}_{V_{\quat}^{R}}=\langle \phi\mid \phi\rangle_{V_{\quat}^{R}}>0$ unless $\phi=0$, a real norm
	\item[(iii)]
	$\langle \phi\mid \psi+\omega\rangle_{V_{\quat}^{R}}=\langle \phi\mid \psi\rangle_{V_{\quat}^{R}}+\langle \phi\mid \omega\rangle_{V_{\quat}^{R}}$
	\item[(iv)]
	$\langle \phi\mid \psi\qu\rangle_{V_{\quat}^{R}}=\langle \phi\mid \psi\rangle_{V_{\quat}^{R}}\qu$
	\item[(v)]
	$\langle \phi\qu\mid \psi\rangle_{V_{\quat}^{R}}=\overline{\qu}\langle \phi\mid \psi\rangle_{V_{\quat}^{R}}$
\end{enumerate}
where $\overline{\qu}$ stands for the quaternionic conjugate. It is always assumed that the
space $V_{\quat}^{R}$ is complete under the norm given above and separable. Then,  together with $\langle\cdot\mid\cdot\rangle$ this defines a right quaternionic Hilbert space. Quaternionic Hilbert spaces share many of the standard properties of complex Hilbert spaces. In this paper, more than one Hilbert spaces will be involved in the results, but one notices that every Hilbert space, involved in results, is right quaternionic Hilbert space.

The next two Propositions can be established following the proof of their complex counterparts, see e.g. \cite{ghimorper,Vis}.
\begin{proposition}\label{P1}
Let $\mathcal{O}=\{\varphi_{k}\,\mid\,k\in N\}$
be an orthonormal subset of $V_{\quat}^{R}$, where $N$ is a countable index set. Then following conditions are pairwise equivalent:
\begin{itemize}
\item [(a)] The closure of the linear combinations of elements in $\mathcal O$ with coefficients on the right is $V_{\quat}^{R}$.
\item [(b)] For every $\phi,\psi\in V_{\quat}^{R}$, the series $\sum_{k\in N}\langle\phi\mid\varphi_{k}\rangle_{V_{\quat}^{R}}\langle\varphi_{k}\mid\psi\rangle_{V_{\quat}^{R}}$ converges absolutely and it holds:
$$\langle\phi\mid\psi\rangle_{V_{\quat}^{R}}=\sum_{k\in N}\langle\phi\mid\varphi_{k}\rangle_{V_{\quat}^{R}}\langle\varphi_{k}\mid\psi\rangle_{V_{\quat}^{R}}.$$
\item [(c)] For every  $\phi\in V_{\quat}^{R}$, it holds:
$$\|\phi\|^{2}_{V_{\quat}^{R}}=\sum_{k\in N}\mid\langle\varphi_{k}\mid\phi\rangle_{V_{\quat}^{R}}\mid^{2}.$$
\item [(d)] $\mathcal{O}^{\bot}=\{0\}$.
\end{itemize}
\end{proposition}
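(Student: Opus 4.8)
The plan is to prove the cycle $(a)\Rightarrow(b)\Rightarrow(c)\Rightarrow(d)\Rightarrow(a)$, following the classical orthonormal-basis argument while tracking the side on which quaternionic scalars act. Write $c_k=\langle\varphi_k\mid\phi\rangle_{V_{\quat}^{R}}$ and, for finite $F\subseteq N$, put $\phi_F=\sum_{k\in F}\varphi_k c_k$.

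For $(a)\Rightarrow(b)$ I would first establish Bessel's inequality: using orthonormality of $\mathcal{O}$ and properties (iii)--(v) of the inner product one checks that $\langle\phi\mid\phi_F\rangle_{V_{\quat}^{R}}=\langle\phi_F\mid\phi\rangle_{V_{\quat}^{R}}=\langle\phi_F\mid\phi_F\rangle_{V_{\quat}^{R}}=\sum_{k\in F}|c_k|^2$, so that $0\le\|\phi-\phi_F\|^2_{V_{\quat}^{R}}=\|\phi\|^2_{V_{\quat}^{R}}-\sum_{k\in F}|c_k|^2$ and hence $\sum_{k\in N}|c_k|^2\le\|\phi\|^2_{V_{\quat}^{R}}$. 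This makes the net $(\phi_F)$ Cauchy (as $\|\phi_{F'}-\phi_F\|^2_{V_{\quat}^{R}}=\sum_{k\in F'\setminus F}|c_k|^2$ for $F\subseteq F'$), so by completeness it converges to some $\phi_0$ lying in the closure $M$ of the right-linear span of $\mathcal{O}$; continuity of the inner product then gives $\phi-\phi_0\in\mathcal{O}^{\bot}$, and hypothesis (a) forces $M=V_{\quat}^{R}$, so $\phi-\phi_0$ is orthogonal to all of $V_{\quat}^{R}$ and therefore $\phi=\phi_0=\sum_{k\in N}\varphi_k\langle\varphi_k\mid\phi\rangle_{V_{\quat}^{R}}$. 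Applying the same to $\psi$ and using continuity of the inner product yields the identity in (b), with absolute convergence coming from the termwise bound $|\langle\phi\mid\varphi_k\rangle_{V_{\quat}^{R}}\langle\varphi_k\mid\psi\rangle_{V_{\quat}^{R}}|\le\tfrac12\big(|c_k|^2+|\langle\varphi_k\mid\psi\rangle_{V_{\quat}^{R}}|^2\big)$ together with Bessel applied to $\phi$ and $\psi$.

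The steps $(b)\Rightarrow(c)$ (put $\psi=\phi$) and $(c)\Rightarrow(d)$ (if $\phi\in\mathcal{O}^{\bot}$ the sum in (c) vanishes, so $\|\phi\|^2_{V_{\quat}^{R}}=0$ and $\phi=0$ by property (ii)) are immediate. For $(d)\Rightarrow(a)$, let $M$ be the closure of the right-linear combinations of elements of $\mathcal{O}$, a closed right-linear subspace; I would invoke the orthogonal decomposition $V_{\quat}^{R}=M\oplus M^{\bot}$, the quaternionic analogue of the projection theorem, which holds because the norm is real-valued so that the parallelogram law and the usual distance-minimization argument carry over verbatim (cf. \cite{ghimorper,Vis}). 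If $M\ne V_{\quat}^{R}$ then $M^{\bot}\ne\{0\}$, contradicting $M^{\bot}\subseteq\mathcal{O}^{\bot}=\{0\}$ from (d); hence $M=V_{\quat}^{R}$, which is (a).

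The main obstacle is not any single implication but making sure the three auxiliary facts used — Bessel's inequality, unconditional convergence of the Fourier net, and the orthogonal projection theorem — are valid in the right quaternionic setting. Each survives precisely because the scalar-homogeneity axioms (iv)--(v) keep every quaternionic coefficient on its correct side throughout the computations above and the norm takes real values, so the only genuine care needed is the left/right bookkeeping of scalars, after which the argument transcribes the complex one, as the excerpt already indicates and as carried out in \cite{ghimorper,Vis}.
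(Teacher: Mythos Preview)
Your proposal is correct and follows exactly the approach the paper indicates: the paper does not give a proof but simply remarks that the result ``can be established following the proof of their complex counterparts, see e.g.\ \cite{ghimorper,Vis}'', and your cycle $(a)\Rightarrow(b)\Rightarrow(c)\Rightarrow(d)\Rightarrow(a)$ via Bessel's inequality, convergence of the Fourier net, and the projection theorem is precisely that standard argument with the appropriate left/right bookkeeping for quaternionic scalars.
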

\begin{definition}
The set $\mathcal{O}$ as in Proposition \ref{P1} is called a {\em Hilbert basis} of $V_{\quat}^{R}$.
\end{definition}
\begin{proposition}\label{P2}
Every quaternionic Hilbert space $V_{\quat}^{R}$ has a Hilbert basis. All the Hilbert bases of $V_{\quat}^{R}$ have the same cardinality.

Furthermore, if $\mathcal{O}$ is a Hilbert basis of $V_{\quat}^{R}$, then every  $\phi\in V_{\quat}^{R}$ can be uniquely decomposed as follows:
$$\phi=\sum_{k\in N}\varphi_{k}\langle\varphi_{k}\mid\phi\rangle_{V_{\quat}^{R}},$$
where the series $\sum_{k\in N}\varphi_k\langle\varphi_{k}\mid\phi\rangle_{V_{\quat}^{R}}$ converges absolutely in $V_{\quat}^{R}$.
\end{proposition}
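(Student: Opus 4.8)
The plan is to imitate the standard proofs for complex Hilbert spaces, keeping track throughout of the side on which quaternionic scalars sit and of the conjugations forced by properties (i), (iv) and (v). I would treat the three assertions separately. For \textbf{existence}, since $\vr$ is separable I would take a countable dense subset and run the Gram--Schmidt procedure on it: this uses only subtraction of projections of the form $\varphi_j\langle\varphi_j\mid\cdot\rangle_{\vr}$ and right multiplication by the real scalars $\|\cdot\|^{-1}$, so it goes through verbatim and yields a countable orthonormal set $\mathcal{O}$ whose right-linear span is dense in $\vr$. Equivalently one may use Zorn's lemma to produce a maximal orthonormal set: maximality gives $\mathcal{O}^{\perp}=\{0\}$ (any unit vector in $\mathcal{O}^{\perp}$ could be adjoined), and then the equivalence (d)$\Leftrightarrow$(a) of Proposition \ref{P1} shows $\mathcal{O}$ is a Hilbert basis.

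For the \textbf{expansion formula}, fix a Hilbert basis $\mathcal{O}=\{\varphi_k\mid k\in N\}$ and $\phi\in\vr$, and for finite $F\subseteq N$ put $\phi_F=\sum_{k\in F}\varphi_k\langle\varphi_k\mid\phi\rangle_{\vr}$. Expanding $\|\phi_F\|^2$ with properties (iv)--(v) and orthonormality collapses it to $\sum_{k\in F}|\langle\varphi_k\mid\phi\rangle_{\vr}|^2$, and likewise $\|\phi_{F_1}-\phi_{F_2}\|^2=\sum_{k\in F_1\triangle F_2}|\langle\varphi_k\mid\phi\rangle_{\vr}|^2$. By Proposition \ref{P1}(c), $\sum_{k\in N}|\langle\varphi_k\mid\phi\rangle_{\vr}|^2=\|\phi\|^2<\infty$; hence the net $(\phi_F)_F$ is Cauchy, so it converges in $\vr$ to some $\phi'$, the limit being independent of any enumeration of $N$ — this is the sense in which the series converges absolutely. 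By continuity of the inner product and orthonormality, $\langle\varphi_j\mid\phi'\rangle_{\vr}=\langle\varphi_j\mid\phi\rangle_{\vr}$ for every $j$, so $\phi-\phi'\in\mathcal{O}^{\perp}$, which is $\{0\}$ by Proposition \ref{P1}(d); thus $\phi=\phi'$. Uniqueness is immediate: if $\phi=\sum_{k\in N}\varphi_k c_k$ with $c_k\in\quat$, then applying $\langle\varphi_j\mid\cdot\rangle_{\vr}$, using continuity and property (iv), gives $c_j=\langle\varphi_j\mid\phi\rangle_{\vr}$.

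For \textbf{equicardinality}, let $\mathcal{O}$ and $\mathcal{O}'$ be Hilbert bases. If one of them is finite, $\vr$ is a finite-dimensional right quaternionic vector space and both are vector-space bases, hence of equal size. If both are infinite, then for each $e\in\mathcal{O}$ the set $N_e=\{f\in\mathcal{O}'\mid\langle e\mid f\rangle_{\vr}\neq 0\}$ is countable, because $\sum_{f\in\mathcal{O}'}|\langle e\mid f\rangle_{\vr}|^2=\|e\|^2<\infty$ by Proposition \ref{P1}(c); moreover $\mathcal{O}'=\bigcup_{e\in\mathcal{O}}N_e$, since an $f\in\mathcal{O}'$ missing from every $N_e$ would lie in $\mathcal{O}^{\perp}=\{0\}$. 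Therefore $\#\mathcal{O}'\le\aleph_0\cdot\#\mathcal{O}=\#\mathcal{O}$, and by symmetry together with the Schr\"oder--Bernstein theorem $\#\mathcal{O}=\#\mathcal{O}'$. (The standing separability hypothesis already forces every Hilbert basis to be at most countable, but the argument above is still what is needed to rule out a finite basis coexisting with an infinite one.)

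The \textbf{main obstacle} here is not conceptual but bookkeeping: one must be sure that when expanding norms and inner products every scalar stays to the right of the basis vectors and every factor extracted from the left slot is conjugated, so that expressions such as $\|\phi_F\|^2$ really reduce to the real sum $\sum_{k\in F}|\langle\varphi_k\mid\phi\rangle_{\vr}|^2$ rather than to some order-dependent quaternionic combination. Once this discipline is observed, the classical arguments transcribe essentially word for word.
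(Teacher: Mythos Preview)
Your proposal is correct and is precisely the kind of argument the paper has in mind: the paper does not give a proof of this proposition at all, but simply remarks (just before Proposition~\ref{P1}) that ``the next two Propositions can be established following the proof of their complex counterparts'' and refers to \cite{ghimorper,Vis}. Your sketch carries out exactly that program---Zorn/Gram--Schmidt for existence, the standard Cauchy-net argument from Parseval for the expansion, and the usual cardinality bound via countable supports---while taking care of the right-scalar bookkeeping, so there is nothing to compare beyond noting that you have supplied the details the paper omits.
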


It should be noted that once a Hilbert basis is fixed, every left (resp. right) quaternionic Hilbert space also becomes a right (resp. left) quaternionic Hilbert space \cite{ghimorper,Vis}.

The field of quaternions $\quat$ itself can be turned into a left quaternionic Hilbert space by defining the inner product $\langle \qu \mid \qu^\prime \rangle = \qu \overline{\qu^{\prime}}$ or into a right quaternionic Hilbert space with  $\langle \qu \mid \qu^\prime \rangle = \overline{\qu}\qu^\prime$.
\section{Right quaternionic linear  operators and some basic properties}
In this section we shall define right  $\quat$-linear operators and recall some basis properties. Most of them are very well known. In this manuscript, we follow the notations in \cite{AC} and \cite{ghimorper}. We shall also recall some results pertinent to the development of the paper. 
\begin{definition}
A mapping $A:\D(A)\subseteq V_{\quat}^R \longrightarrow U_{\quat}^R$, where $\D(A)$ stands for the domain of $A$, is said to be right $\quat$-linear operator or, for simplicity, right linear operator, if
$$A(\phi\bfraka+\psi\bfrakb)=(A\phi)\bfraka+(A\psi)\bfrakb,~~\mbox{~if~}~~\phi,\,\psi\in \D(A)~~\mbox{~and~}~~\bfraka,\bfrakb\in\quat.$$
\end{definition}
The set of all right linear operators from $V_{\quat}^{R}$ to $U_{\quat}^{R}$ will be denoted by $\mathcal{L}(V_{\quat}^{R},U_{\quat}^{R})$ and the identity linear operator on $V_{\quat}^{R}$ will be denoted by $\Iop$. For a given $A\in \mathcal{L}(V_{\quat}^{R},U_{\quat}^{R})$, the range and the kernel will be
\begin{eqnarray*}
\text{ran}(A)&=&\{\psi \in U_{\quat}^{R}~|~A\phi =\psi \quad\text{for}~~\phi \in\D(A)\}\\
\ker(A)&=&\{\phi \in\D(A)~|~A\phi =0\}.
\end{eqnarray*}
We call an operator $A\in \mathcal{L}(V_{\quat}^{R},U_{\quat}^{R})$ bounded if
\begin{equation}\label{PE1}
\|A\|=\sup_{\|\phi \|_{\vr}=1}\|A\phi \|_{\ur}<\infty,
\end{equation}
or equivalently, there exist $K\geq 0$ such that $\|A\phi \|_{\ur}\leq K\|\phi \|_{\vr}$ for all $\phi \in\D(A)$. The set of all bounded right linear operators from $V_{\quat}^{R}$ to $U_{\quat}^{R}$ will be denoted by $\B(V_{\quat}^{R},U_{\quat}^{R})$. Set of all  invertible bounded right linear operators from $V_{\quat}^{R}$ to $U_{\quat}^{R}$ will be denoted by $\mathcal{G} (V_{\quat}^{R},U_{\quat}^{R})$. We also denote for a set $\Delta\subseteq\quat$, $\Delta^*=\{\oqu~|~\qu\in\Delta\}$.
\\
Assume that $V_{\quat}^{R}$ is a right quaternionic Hilbert space, $A$ is a right linear operator acting on it.
Then, there exists a unique linear operator $A^{\dagger}$ such that
\begin{equation}\label{Ad1}
\langle \psi \mid A\phi \rangle_{\ur}=\langle A^{\dagger} \psi \mid\phi \rangle_{\vr};\quad\text{for all}~~~\phi \in \D (A), \psi\in\D(A^\dagger),
\end{equation}
where the domain $\D(A^\dagger)$ of $A^\dagger$ is defined by
$$
\D(A^\dagger)=\{\psi\in U_{\quat}^{R}\ |\ \exists \varphi\ {\rm such\ that\ } \langle \psi \mid A\phi \rangle_{\ur}=\langle \varphi \mid\phi \rangle_{\vr}\}.$$
The following theorem gives two important and fundamental results about right $\quat$-linear bounded operators which are already appeared in \cite{ghimorper} for the case of $\vr=\ur$. Point (b) of the following theorem is known as the open mapping theorem.
\begin{theorem}\label{open} Let $A:\D(A)\subseteq V_{\quat}^R \longrightarrow U_{\quat}^R$ be a right $\quat$-linear operator. Then
\begin{itemize} 
\item[(a)] $A\in\B(\vr,\ur)$ if and only if $A$ is continuous.
\item[(b)] if $A\in\B(\vr,\ur)$ is surjective, then $A$ is open. In particular, if $A$ is bijective then $A^{-1}\in\B(\vr,\ur)$.
\end{itemize}
\end{theorem}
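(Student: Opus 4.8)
The plan is to mimic the classical complex-analytic arguments, taking care that everything is done with right-linearity in mind. I treat the two parts in order.

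For part (a), the "only if" direction is immediate: a bounded operator is Lipschitz, hence continuous. For the "if" direction, suppose $A$ is continuous, in particular continuous at $0$. Then there is $\delta>0$ such that $\|\phi\|_{\vr}\le\delta$ forces $\|A\phi\|_{\ur}\le 1$. Given any nonzero $\phi\in\D(A)$, the vector $\psi=\phi\,\bfraka$ with $\bfraka=\delta\|\phi\|_{\vr}^{-1}\in\R\subset\quat$ has norm $\delta$ (using property (ii) of the inner product and that right scalar multiplication by a real scales the norm), so $\|A\psi\|_{\ur}\le 1$; by right linearity $A\psi=(A\phi)\bfraka$, and $\|(A\phi)\bfraka\|_{\ur}=\delta\|\phi\|_{\vr}^{-1}\|A\phi\|_{\ur}$, which rearranges to $\|A\phi\|_{\ur}\le\delta^{-1}\|\phi\|_{\vr}$. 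Hence $A\in\B(\vr,\ur)$ with $\|A\|\le\delta^{-1}$. The only quaternionic subtlety here is that we scale by a positive real, so non-commutativity is not an issue.

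For part (b), the argument follows the Baire-category proof of the open mapping theorem. First, since $\vr$ is complete and separable and $A$ is surjective, write $\vr=\bigcup_{n\ge 1}\overline{B(0,n)}$, so $\ur=A(\vr)=\bigcup_n A(\overline{B(0,n)})$; by the Baire category theorem (valid in any complete metric space, in particular in the real metric space underlying $\ur$) some $\overline{A(\overline{B(0,n)})}$ has nonempty interior, and a standard symmetrization-and-scaling argument — using that balls in $\vr$ are invariant under multiplication by real scalars and under translation — shows $\overline{A(B(0,1))}\supseteq B(0,r)$ in $\ur$ for some $r>0$. The second, more delicate step removes the closure: given $\psi$ with $\|\psi\|_{\ur}<r$, one builds a Cauchy sequence of partial sums $\phi=\sum_k \phi_k$ with $\phi_k\in B(0,2^{-k})$ and $\|\psi-A(\phi_1+\dots+\phi_k)\|_{\ur}<r2^{-k}$, using completeness of $\vr$ to get the limit $\phi\in B(0,2)$ with $A\phi=\psi$; this shows $A(B(0,2))\supseteq B(0,r)$, i.e. $A$ is open. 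Finally, if $A$ is also injective, openness of $A$ says exactly that $A^{-1}$ is continuous, and by part (a) $A^{-1}\in\B(\ur,\vr)$.

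The main obstacle is purely a matter of bookkeeping rather than a genuine mathematical difficulty: one must check that each of the classical manipulations — the translation/symmetrization passing from a ball around an arbitrary center to a ball around $0$, the iterative construction of $\phi$ as an absolutely convergent series — only ever multiplies vectors by real scalars on the right, so that right linearity of $A$ suffices and the failure of $(\qu A)^{\dagger}=\oqu A^{\dagger}$ and similar non-commutativity phenomena never enter. Since $\vr$ and $\ur$ are complete and the relevant series converge absolutely (as in Proposition \ref{P2}), the construction goes through verbatim as in the complex case, and indeed as noted in \cite{ghimorper} for $\vr=\ur$; the only new content is the trivial observation that nothing in the proof is obstructed by the one-sided structure.
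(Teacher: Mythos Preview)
Your proposal is correct and follows exactly the route the paper indicates: the paper's own proof is simply ``The proof is the same as the proof in a complex Hilbert space, (see e.g.\ \cite{con}),'' and you have supplied precisely that classical argument (continuity $\Leftrightarrow$ boundedness via scaling by a positive real, then the Baire-category open mapping theorem) together with the observation that only real scalar multiplication is needed so right linearity suffices. Your write-up in fact provides more detail than the paper itself; the only slip is notational --- the theorem as stated in the paper writes $A^{-1}\in\B(\vr,\ur)$, whereas you (correctly) have $A^{-1}\in\B(\ur,\vr)$.
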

\begin{proof}
The proof is the same as the proof in a complex Hilbert space, (see e.g. \cite{con}).
\end{proof}
The following proposition provides some useful aspects about the orthogonal complement subsets.
\begin{proposition}\label{ort}
Let $M\subseteq\vr$. Then
\begin{itemize}
\item [(a)] $M^{^\perp}$ is closed.
\item [(b)] if $M$ is a closed subspace of $\vr$ then $\vr=M\oplus M^\perp$.
\item [(c)] if $\dim(M)<\infty$, then $M$ is a closed subspace.
\end{itemize}
\end{proposition}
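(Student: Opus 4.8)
The plan is to prove the three parts of Proposition~\ref{ort} about orthogonal complements in a right quaternionic Hilbert space $\vr$, adapting the classical Hilbert space arguments while being careful about the side on which quaternionic scalars act. Recall that for $M\subseteq\vr$ we set $M^\perp=\{\phi\in\vr\mid\langle\phi\mid m\rangle_{\vr}=0\text{ for all }m\in M\}$.

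\medskip
\noindent\textbf{Part (a).} First I would show $M^\perp$ is a closed right subspace. Linearity on the right is immediate from properties (iii) and (iv) of the inner product: if $\phi,\psi\in M^\perp$ and $\bfraka,\bfrakb\in\quat$, then for every $m\in M$, $\langle\phi\bfraka+\psi\bfrakb\mid m\rangle_{\vr}=\overline{\bfraka}\langle\phi\mid m\rangle_{\vr}+\overline{\bfrakb}\langle\psi\mid m\rangle_{\vr}=0$ using property (v). For closedness, fix $m\in M$ and note that $\phi\mapsto\langle\phi\mid m\rangle_{\vr}$ is continuous (by the Cauchy--Schwarz inequality, which holds in $\vr$), so its kernel is closed; then $M^\perp$ is an intersection of closed sets, hence closed.

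\medskip
\noindent\textbf{Part (b).} Assume $M$ is a closed subspace. I would first establish the nearest-point projection: given $\phi\in\vr$, the number $d=\inf_{m\in M}\|\phi-m\|_{\vr}$ is attained at a unique $m_0\in M$. Existence follows by taking a minimizing sequence $(m_n)$ and using the parallelogram law $\|a+b\|^2+\|a-b\|^2=2\|a\|^2+2\|b\|^2$ (valid in $\vr$ since the norm is real-valued and comes from the inner product) applied to $a=\phi-m_n$, $b=\phi-m_k$ to show $(m_n)$ is Cauchy; completeness of $\vr$ and closedness of $M$ give the limit $m_0\in M$; uniqueness follows similarly. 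Next I would show $\phi-m_0\in M^\perp$: for any $m\in M$ and $\bfraka\in\quat$, $m_0+m\bfraka\in M$, so $\|\phi-m_0\|^2\le\|\phi-m_0-m\bfraka\|^2$; expanding via properties (iii)--(v) yields $0\le-\langle\phi-m_0\mid m\rangle_{\vr}\bfraka-\overline{\bfraka}\,\overline{\langle\phi-m_0\mid m\rangle_{\vr}}+\overline{\bfraka}\|m\|^2\bfraka$, and choosing $\bfraka=\langle m\mid\phi-m_0\rangle_{\vr}\,t$ for small real $t>0$ forces $\langle\phi-m_0\mid m\rangle_{\vr}=0$. Thus $\phi=m_0+(\phi-m_0)\in M+M^\perp$, and since $M\cap M^\perp=\{0\}$ (any vector there has zero norm), the sum is direct: $\vr=M\oplus M^\perp$.

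\medskip
\noindent\textbf{Part (c).} Suppose $\dim M=n<\infty$ with basis $\{e_1,\dots,e_n\}$, which by Gram--Schmidt (run on the right, using Proposition~\ref{P1}-type computations) we may take orthonormal. Define $P\phi=\sum_{k=1}^n e_k\langle e_k\mid\phi\rangle_{\vr}$; this is a bounded right linear map with range $M$, and $\phi-P\phi\perp e_k$ for each $k$. If $\phi_j\to\phi$ in $\vr$ with $\phi_j\in M$, then by continuity of the inner product $P\phi_j\to P\phi$, but $P\phi_j=\phi_j$, so $\phi=P\phi\in M$; hence $M$ is closed. The main obstacle throughout is purely bookkeeping: every place a complex proof multiplies by a scalar must be re-examined so that the scalar sits on the correct (right) side of vectors and the conjugate lands correctly under the inner product via properties (iv) and (v); once the parallelogram law and Cauchy--Schwarz are in hand for $\vr$, no genuinely new idea is needed, and I expect the variational step in part (b) — extracting orthogonality from the minimality inequality with a quaternionic, rather than complex, perturbation parameter — to be the only place requiring real care.
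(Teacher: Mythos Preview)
Your proposal is correct and follows exactly the approach the paper intends: the paper's proof is simply ``The proof is similar to the proof of the complex case, (see e.g.\ \cite{Rud}),'' and you have carried out precisely that adaptation, with appropriate care about right scalar multiplication and the placement of conjugates in the inner product. Nothing further is needed.
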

\begin{proof}
The proof is similar to the proof of the complex case, (see e.g. \cite{Rud}).
\end{proof}
The points (a) and (b) of the following proposition are already appeared in \cite{ghimorper} for the case $\vr=\ur$.
\begin{proposition}\label{IP30}
Let $A\in\B(\vr, \ur)$. Then
\begin{itemize}
\item [(a)] $\ra(A)^\perp=\kr(A^\dagger).$
\item [(b)] $\kr(A)=\ra(A^\dagger)^\perp.$
\item [(c)] $\kr(A)$ is closed subspace of $\vr$.
\end{itemize}
\end{proposition}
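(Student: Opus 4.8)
The plan is to prove the three assertions of Proposition \ref{IP30} in the order (c), (a), (b), since the closedness of the kernel follows immediately from continuity and the other two parts are essentially adjoint-manipulations mirroring the Hilbert-space argument.

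First I would establish (c): $\kr(A)$ is closed. Since $A\in\B(\vr,\ur)$, by Theorem \ref{open}(a) $A$ is continuous, and $\kr(A)=A^{-1}(\{0\})$ is the preimage of the closed set $\{0\}$ under a continuous map, hence closed. That it is a right $\quat$-linear subspace is immediate from right linearity of $A$. Alternatively, one notes that $\kr(A)$ equals $\ra(A^\dagger)^\perp$ by part (b) and then invokes Proposition \ref{ort}(a); but the direct argument is cleaner, and part (b) will be proved anyway.

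Next, for (a), I would show the two inclusions in $\ra(A)^\perp=\kr(A^\dagger)$. For ``$\supseteq$'': if $\psi\in\kr(A^\dagger)$, then for every $\phi\in\vr$ we have $\langle\psi\mid A\phi\rangle_{\ur}=\langle A^\dagger\psi\mid\phi\rangle_{\vr}=\langle 0\mid\phi\rangle_{\vr}=0$, so $\psi\perp\ra(A)$. For ``$\subseteq$'': if $\psi\in\ra(A)^\perp$, then $\langle A^\dagger\psi\mid\phi\rangle_{\vr}=\langle\psi\mid A\phi\rangle_{\ur}=0$ for all $\phi\in\vr$; taking $\phi=A^\dagger\psi$ gives $\|A^\dagger\psi\|^2_{\vr}=0$, hence $A^\dagger\psi=0$, i.e.\ $\psi\in\kr(A^\dagger)$. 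Here I am using that $A\in\B(\vr,\ur)$ has everywhere-defined bounded adjoint $A^\dagger\in\B(\ur,\vr)$ with $(A^\dagger)^\dagger=A$, which is the content of the adjoint construction recalled in \eqref{Ad1}.

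Finally, (b) follows from (a) applied to $A^\dagger$ in place of $A$: since $(A^\dagger)^\dagger=A$, part (a) yields $\ra(A^\dagger)^\perp=\kr((A^\dagger)^\dagger)=\kr(A)$. I expect no serious obstacle here; the only point that requires a little care is the availability and basic properties (bounded, everywhere defined, involutive) of the adjoint in the right-quaternionic setting with the stated left-linear inner-product conventions, but this is exactly what the paragraph preceding the proposition records, so it may be cited. The non-commutativity of $\quat$ does not interfere, because in every step the quaternionic scalars sit on the correct side of the inner product dictated by properties (iv)--(v), and the computation $\langle A^\dagger\psi\mid A^\dagger\psi\rangle_{\vr}=0\Rightarrow A^\dagger\psi=0$ uses only positivity of the norm, property (ii).
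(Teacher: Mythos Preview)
Your proposal is correct and supplies exactly the standard elementary arguments; the paper itself dispenses with the details entirely, writing only ``The proofs are elementary.'' So your approach is the intended one, just spelled out in full.
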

\begin{proof}
The proofs are elementary.
\end{proof}
\begin{proposition}\label{IP4}
Let $A:\D(A):\vr\longrightarrow\ur$ be a right quaternionic linear operator. If $A$ is closed and satisfies the condition that there exists $c>0$ such that  \begin{center}
$\|A\phi\|_{\ur}\geq c\|\phi\|_{\vr}$, for all $\phi\in\D(A)$,
\end{center} then $\text{ran}(A)$ is closed.
\end{proposition}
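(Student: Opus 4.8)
The plan is to verify directly that $\ra(A)$ is sequentially closed. So I would begin with an arbitrary sequence $(\psi_n)$ in $\ra(A)$ with $\psi_n \to \psi$ in $\ur$, write $\psi_n = A\phi_n$ for some $\phi_n \in \D(A)$, and aim to produce a vector $\phi \in \D(A)$ satisfying $A\phi = \psi$, which will place $\psi$ in $\ra(A)$.

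First I would exploit that a convergent sequence is Cauchy: for $m,n$ large, $\|\psi_n - \psi_m\|_{\ur}$ is small. Since $A$ is right linear, $\psi_n - \psi_m = A(\phi_n - \phi_m)$, and the lower bound hypothesis gives $\|\phi_n - \phi_m\|_{\vr} \le c^{-1}\|A(\phi_n - \phi_m)\|_{\ur} = c^{-1}\|\psi_n - \psi_m\|_{\ur}$. Hence $(\phi_n)$ is Cauchy in $\vr$; because $\vr$ is a right quaternionic Hilbert space, it is complete, so $\phi_n \to \phi$ for some $\phi \in \vr$.

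Next I would invoke that $A$ is closed, that is, its graph $\{(\eta, A\eta)\ |\ \eta \in \D(A)\}$ is a closed subset of $\vr \oplus \ur$. Since $\phi_n \to \phi$ in $\vr$ and simultaneously $A\phi_n = \psi_n \to \psi$ in $\ur$, the pair $(\phi,\psi)$ is a limit of points of the graph and therefore lies in the graph; thus $\phi \in \D(A)$ and $A\phi = \psi$. Consequently $\psi = A\phi \in \ra(A)$, and $\ra(A)$ is closed.

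I do not expect any real obstacle: this is essentially the verbatim complex argument, and the only two points worth flagging are that right linearity is exactly what legitimizes $\psi_n - \psi_m = A(\phi_n - \phi_m)$, and that completeness of the right quaternionic Hilbert space $\vr$ (part of the standing assumptions) is what yields the limit $\phi$. If convenient, one can also record the immediate consequence that such an $A$ is injective with closed range, hence a bijection of $\D(A)$ onto $\ra(A)$; when $A$ is moreover bounded and surjective this combines with Theorem \ref{open}(b) to give boundedness of $A^{-1}$.
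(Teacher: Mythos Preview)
Your proposal is correct and is precisely the standard argument the paper has in mind: the paper itself does not spell out the proof but only refers to the corresponding result in \cite{BT}, whose proof proceeds exactly by taking a Cauchy sequence in $\ra(A)$, using the lower bound to make the preimages Cauchy, and then invoking closedness of $A$ to conclude. There is no essential difference.
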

\begin{proof}
The proof can be manipulated from the proof of proposition 2.13 in \cite{BT}.
\end{proof}
\begin{proposition}\label{PP3}
If $A\in\B(\vr)$ and if $\|A\|<1$, then the right linear operator $\Iop-A$ is invertible and the inverse is given by $\displaystyle(\Iop-A)^{-1}=\sum_{k=0}^{n}A^k.$
\end{proposition}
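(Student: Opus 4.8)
The plan is to run the classical Neumann series argument, which requires only three facts that survive unchanged the passage to right $\quat$-linear operators: submultiplicativity of the operator norm, completeness of $\B(\vr)$ in that norm, and continuity of composition. (I note in passing that the right-hand side of the displayed identity should read $\sum_{k=0}^{\infty}A^{k}$ rather than $\sum_{k=0}^{n}A^{k}$.)

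First I would record that for $S,T\in\B(\vr)$ one has $\|ST\|\le\|S\|\,\|T\|$: indeed $\|ST\phi\|_{\vr}\le\|S\|\,\|T\phi\|_{\vr}\le\|S\|\,\|T\|\,\|\phi\|_{\vr}$ by \eqref{PE1}, so that, iterating, $\|A^{k}\|\le\|A\|^{k}$ for all $k\in\N$. Second, I would check that $\B(\vr)$ is complete: if $(B_{n})$ is Cauchy in operator norm, then for each $\phi$ the sequence $(B_{n}\phi)$ is Cauchy in $\vr$, hence convergent since $\vr$ is complete, and the pointwise limit $B$ is right $\quat$-linear (because scalar multiplication on the right is continuous) and bounded (because $\sup_{n}\|B_{n}\|<\infty$); the usual $\eps$-argument upgrades pointwise convergence to norm convergence. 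This is verbatim the complex proof with $\C$ replaced by $\quat$ and scalars acting on the right.

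With these tools, set $S_{n}=\sum_{k=0}^{n}A^{k}$. For $m>n$, $\|S_{m}-S_{n}\|\le\sum_{k=n+1}^{m}\|A\|^{k}$, and since $\|A\|<1$ the geometric series $\sum_{k\ge0}\|A\|^{k}$ converges; hence $(S_{n})$ is Cauchy and converges in norm to some $B\in\B(\vr)$. The telescoping identity $(\Iop-A)S_{n}=S_{n}(\Iop-A)=\Iop-A^{n+1}$ is purely algebraic, valid because distinct powers of $A$ commute with each other and with $\Iop$. Letting $n\to\infty$, using $\|A^{n+1}\|\le\|A\|^{n+1}\to0$ and the continuity of left and right composition by the fixed operator $\Iop-A$, we obtain $(\Iop-A)B=B(\Iop-A)=\Iop$. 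Thus $\Iop-A$ is bijective with $(\Iop-A)^{-1}=B=\sum_{k=0}^{\infty}A^{k}\in\B(\vr)$; one does not even need to invoke Theorem~\ref{open}(b), since the inverse has been produced explicitly as a bounded operator.

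I do not expect a real obstacle here. Non-commutativity of $\quat$ and the basis-dependent left multiplication are irrelevant, because $\Iop$ and the powers of $A$ live in the associative ring $\B(\vr)$ and the only scalars entering the estimates, the numbers $\|A\|^{k}$, are real and nonnegative. The single point meriting explicit care is the completeness of $\B(\vr)$, which is not established earlier in the excerpt at this level of generality; rather than cite it, I would include the two-line pointwise-limit argument sketched above.
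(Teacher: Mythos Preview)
Your proposal is correct and is precisely the classical Neumann series argument; the paper itself does not spell this out but simply states that ``the proof is exactly the same as its complex version'' and cites the complex result, so you have supplied what the paper omits. Your observation that the displayed sum should run to $\infty$ rather than $n$ is also right.
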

\begin{proof}
The proof is exactly the same as its complex version, see theorem 2.17, in \cite{Tok} for a complex proof.
\end{proof}
\begin{theorem}(Bounded inverse theorem)\label{NT1}
Let $A\in\B(\vr,\ur)$, then the following results are equivalent.
\begin{enumerate}
\item [(a)] $A$ has a bounded inverse on its range.
\item[(b)] $A$ is bounded below.
\item[(c)] $A$ is injective and has a closed range.
\end{enumerate}
\end{theorem}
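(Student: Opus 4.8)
The plan is to establish the chain of implications $(a)\Rightarrow(b)\Rightarrow(c)\Rightarrow(a)$, mimicking the classical complex-Hilbert-space argument, since no quaternionic scalar multiple of $A$ intervenes and all the ingredients needed are already available in the excerpt.

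\emph{$(a)\Rightarrow(b)$.} Suppose $A$ has a bounded inverse $B=A^{-1}:\ra(A)\longrightarrow\vr$ on its range, so $BA\phi=\phi$ for all $\phi\in\vr$ and $\|B\psi\|_{\vr}\le\|B\|\,\|\psi\|_{\ur}$ for all $\psi\in\ra(A)$. Then for every $\phi\in\vr$, $\|\phi\|_{\vr}=\|BA\phi\|_{\vr}\le\|B\|\,\|A\phi\|_{\ur}$; if $B=0$ then $\vr=\{0\}$ and the statement is trivial, otherwise set $c=\|B\|^{-1}>0$ to get $\|A\phi\|_{\ur}\ge c\|\phi\|_{\vr}$, i.e. $A$ is bounded below.

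\emph{$(b)\Rightarrow(c)$.} Assume $\|A\phi\|_{\ur}\ge c\|\phi\|_{\vr}$ for some $c>0$ and all $\phi\in\vr$. Injectivity is immediate: $A\phi=0$ forces $\|\phi\|_{\vr}=0$, hence $\phi=0$. For closedness of the range, note that $A\in\B(\vr,\ur)$ is in particular continuous by Theorem \ref{open}(a); a bounded right linear operator is closed (if $\phi_n\to\phi$ and $A\phi_n\to\psi$ then $A\phi_n\to A\phi$ by continuity, so $\psi=A\phi$). Thus $A$ satisfies the hypotheses of Proposition \ref{IP4} with this $c$, and therefore $\ra(A)$ is closed.

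\emph{$(c)\Rightarrow(a)$.} Suppose $A$ is injective with closed range. Regard $A$ as a map $A:\vr\longrightarrow\ra(A)$. Since $\ra(A)$ is a closed subspace of the quaternionic Hilbert space $\ur$, it is itself a right quaternionic Hilbert space (by Proposition \ref{ort}(b), $\ur=\ra(A)\oplus\ra(A)^{\perp}$, so $\ra(A)$ is complete, and separability is inherited). Now $A:\vr\longrightarrow\ra(A)$ is a bounded right linear bijection between right quaternionic Hilbert spaces, so by the open mapping theorem, Theorem \ref{open}(b), its inverse $A^{-1}\in\B(\ra(A),\vr)$ is bounded. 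This $A^{-1}$ is precisely a bounded inverse of $A$ on its range, establishing $(a)$ and closing the cycle.

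The only point requiring a little care in the quaternionic setting is that the range, once shown to be closed, genuinely inherits the structure of a right quaternionic Hilbert space so that Theorem \ref{open}(b) applies to $A:\vr\to\ra(A)$; this is handled by Proposition \ref{ort}(b). Everything else is a verbatim transcription of the complex proof, as anticipated in the introduction.
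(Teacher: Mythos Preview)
Your proof is correct and is precisely the standard complex-Hilbert-space argument that the paper defers to: the paper's own proof consists only of the sentence ``The proof is exactly similar to its complex version. See theorem 1.2 in \cite{kub} for a complex proof.'' Your $(a)\Rightarrow(b)\Rightarrow(c)\Rightarrow(a)$ cycle, using Proposition~\ref{IP4} for closedness of the range and Theorem~\ref{open}(b) for the inverse direction, is exactly that argument spelled out in the quaternionic setting, so there is no difference in approach to discuss.
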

\begin{proof}
The proof is exactly similar to its complex version. See theorem 1.2 in \cite{kub} for a complex proof.
\end{proof}
\begin{proposition}\label{NP1}
Let $A\in\B(\vr,\ur)$, then $\ra(A)$ is closed in $\ur$ if and only if $\ra(A^\dagger)$ is closed in $\vr$.
\end{proposition}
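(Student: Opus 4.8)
The plan is to mimic the classical closed-range theorem argument, using the orthogonal decompositions available from Proposition \ref{ort} and the kernel/range relations from Proposition \ref{IP30}. By symmetry (since $A^{\dagger\dagger}=A$), it suffices to prove one implication: if $\ra(A)$ is closed in $\ur$, then $\ra(A^\dagger)$ is closed in $\vr$. First I would reduce to a situation where $A$ is bounded below. Using Proposition \ref{ort}(b), write $\vr = \kr(A)\oplus\kr(A)^\perp$ and $\ur = \overline{\ra(A)}\oplus\ra(A)^\perp = \ra(A)\oplus\ra(A)^\perp$, the last equality because $\ra(A)$ is assumed closed. Let $A_0$ be the restriction of $A$ to $\kr(A)^\perp$, viewed as a map onto $\ra(A)$. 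Then $A_0$ is a bijective bounded right linear operator between right quaternionic Hilbert spaces, so by the open mapping theorem, Theorem \ref{open}(b), $A_0^{-1}$ is bounded; equivalently, by Theorem \ref{NT1}, there is $c>0$ with $\|A\phi\|_{\ur}\ge c\|\phi\|_{\vr}$ for all $\phi\in\kr(A)^\perp$.

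Next I would identify $\ra(A^\dagger)$. By Proposition \ref{IP30}(b), $\kr(A)=\ra(A^\dagger)^\perp$, hence $\overline{\ra(A^\dagger)}=\kr(A)^\perp$; so it remains to show $\ra(A^\dagger)$ is all of $\kr(A)^\perp$, i.e.\ that $\ra(A^\dagger)$ is already closed and equals this subspace. Given $\phi\in\kr(A)^\perp$, I want to produce $\psi\in\ur$ with $A^\dagger\psi=\phi$. The idea is: for any $\eta\in\vr$, decompose $\eta = \eta_1+\eta_2$ with $\eta_1\in\kr(A)^\perp$, $\eta_2\in\kr(A)$, and define a functional by $\eta\mapsto \langle A\eta_1 \mid A_0^{-1}\text{(something)}\rangle$ — more cleanly: define $\Lambda\colon\ra(A)\to\quat$ by $\Lambda(A\eta)=\langle \phi\mid\eta\rangle_{\vr}$ for $\eta\in\kr(A)^\perp$ (well-defined since $A$ is injective on $\kr(A)^\perp$), and check $\Lambda$ is a bounded right-linear functional on $\ra(A)$: indeed $|\Lambda(A\eta)| = |\langle\phi\mid\eta\rangle_{\vr}|\le \|\phi\|_{\vr}\|\eta\|_{\vr}\le (\|\phi\|_{\vr}/c)\,\|A\eta\|_{\ur}$ using the lower bound. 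Extending $\Lambda$ by zero on $\ra(A)^\perp$ (using $\ur=\ra(A)\oplus\ra(A)^\perp$) gives a bounded right-linear functional on all of $\ur$. By the quaternionic Riesz representation theorem there is $\psi\in\ur$ with $\Lambda(\cdot)=\langle\psi\mid\cdot\rangle_{\ur}$. Then for all $\eta\in\kr(A)^\perp$, $\langle\psi\mid A\eta\rangle_{\ur}=\langle\phi\mid\eta\rangle_{\vr}$, and for $\eta\in\kr(A)$ both sides vanish ($A\eta=0$ on the left; $\phi\perp\kr(A)$ on the right), so $\langle A^\dagger\psi\mid\eta\rangle_{\vr}=\langle\phi\mid\eta\rangle_{\vr}$ for all $\eta\in\vr$, whence $A^\dagger\psi=\phi$. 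Thus $\kr(A)^\perp\subseteq\ra(A^\dagger)\subseteq\overline{\ra(A^\dagger)}=\kr(A)^\perp$, so $\ra(A^\dagger)=\kr(A)^\perp$ is closed by Proposition \ref{ort}(a).

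I expect the main obstacle to be bookkeeping with the sidedness conventions: one must make sure the functional $\Lambda$ is genuinely right-$\quat$-linear (so that Riesz applies in the form giving $\langle\psi\mid\cdot\rangle$), and that the adjoint identity \eqref{Ad1} is applied with the correct slot. The lower bound $\|A\phi\|_{\ur}\ge c\|\phi\|_{\vr}$ on $\kr(A)^\perp$ is the quantitative heart of the argument, and it is exactly what the hypothesis "$\ra(A)$ closed" buys us via the open mapping theorem; everything else is the standard orthogonal-decomposition plus Riesz routine, which goes through verbatim because all spaces involved are right quaternionic Hilbert spaces. Finally, the reverse implication follows by replacing $A$ with $A^\dagger$ and invoking $A^{\dagger\dagger}=A$.
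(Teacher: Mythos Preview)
Your proposal is correct. The paper itself does not give a detailed argument for this proposition---it simply states that ``the proof is the same as for the complex Hilbert spaces'' and refers the reader to Lemma~1.5 of Kubrusly's book---so your write-up is a fully fleshed-out version of exactly the kind of classical closed-range argument the paper is invoking: reduce to $A$ bounded below on $\kr(A)^\perp$ via the open mapping theorem, then use the Riesz representation to show $A^\dagger$ hits every element of $\kr(A)^\perp=\overline{\ra(A^\dagger)}$, and finish by symmetry using $A^{\dagger\dagger}=A$. Your caution about sidedness is well placed but causes no trouble: $\Lambda(A\eta)=\langle\phi\mid\eta\rangle$ is right $\quat$-linear in $A\eta$ because $A$ is right linear and the inner product is right linear in its second slot, so the quaternionic Riesz theorem applies in the form $\Lambda(\cdot)=\langle\psi\mid\cdot\rangle$ you need.
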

\begin{proof}
The proof is the same as for the complex
Hilbert spaces, see lemma 1.5 in \cite{kub} for a complex proof.
\end{proof}
\begin{proposition}\label{NP2}
Let $A\in\B(\vr)$. Then,
\begin{enumerate}
\item [(a)] $A$ is invertible if and only if it is injective with a closed range (i.e., $\kr(A)=\{0\}$ and $\overline{\ra(A)}=\ra(A)$).
\item[(b)] $A$ is left (right) invertible if and only if $A^\dagger$ is right (left) invertible.
\item[(c)] $A$ is right invertible if and only if it is surjective (i.e., $\ra(A)=\vr$).
\end{enumerate}
\end{proposition}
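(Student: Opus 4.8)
The plan is to unwind all three statements from three ingredients already in hand: the open mapping theorem (Theorem \ref{open}), the bounded inverse theorem (Theorem \ref{NT1}), and the elementary identities $(BA)^{\dagger}=A^{\dagger}B^{\dagger}$, $A^{\dagger\dagger}=A$, $\Iop^{\dagger}=\Iop$ for bounded right linear operators, which I would first note follow from \eqref{Ad1} and uniqueness of the adjoint. Because no quaternionic scalar multiplies an operator anywhere in the statement, the non-commutativity of $\quat$ plays no role and the arguments will be the complex ones. For (a), I would argue: if $A$ is invertible then $A^{-1}\in\B(\vr)$ by Theorem \ref{open}(b), hence $A$ is injective and, since $\|\phi\|_{\vr}=\|A^{-1}A\phi\|_{\vr}\le\|A^{-1}\|\,\|A\phi\|_{\vr}$, bounded below; so $\ra(A)$ is closed by Theorem \ref{NT1} (and in fact $\ra(A)=\vr$). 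Conversely, once $A$ is injective with $\ra(A)=\vr$ — i.e. $\kr(A)=\{0\}$, $\ra(A)$ closed, and, via Proposition \ref{IP30}(a), $\kr(A^{\dagger})=\{0\}$, which upgrades closedness to $\ra(A)=\overline{\ra(A)}=\vr$ — the operator $A$ is a bounded bijection of $\vr$, so Theorem \ref{open}(b) returns $A^{-1}\in\B(\vr)$.

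For (b) the plan is purely formal: if $BA=\Iop$ with $B\in\B(\vr)$, taking adjoints gives $A^{\dagger}B^{\dagger}=(BA)^{\dagger}=\Iop^{\dagger}=\Iop$, so $A^{\dagger}$ is right invertible with right inverse $B^{\dagger}\in\B(\vr)$; conversely from $A^{\dagger}C=\Iop$ one gets $C^{\dagger}A=(A^{\dagger}C)^{\dagger}=\Iop$, so $A$ is left invertible, and the same computation with ``left'' and ``right'' interchanged yields the remaining equivalence. For (c): if $AC=\Iop$ then $\psi=A(C\psi)$ for all $\psi$, so $\ra(A)=\vr$. For the converse I would assume $A$ surjective, observe that $\kr(A)$ is a closed subspace (Proposition \ref{IP30}(c)) so $\vr=\kr(A)\oplus\kr(A)^{\perp}$ (Proposition \ref{ort}(b)) with $\kr(A)^{\perp}$ a complete separable right quaternionic Hilbert space, and then check that the restriction $\widetilde A:=A|_{\kr(A)^{\perp}}\in\B(\kr(A)^{\perp},\vr)$ is bijective: injective because $\kr(\widetilde A)=\kr(A)\cap\kr(A)^{\perp}=\{0\}$, and surjective because $A\phi=A\phi_{1}$ whenever $\phi=\phi_{0}+\phi_{1}$ with $\phi_{0}\in\kr(A)$, $\phi_{1}\in\kr(A)^{\perp}$. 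Theorem \ref{open}(b) then gives $\widetilde A^{-1}\in\B(\vr,\kr(A)^{\perp})$, and composing it with the bounded inclusion $\kr(A)^{\perp}\hookrightarrow\vr$ produces the desired $C\in\B(\vr)$ with $AC=\widetilde A\,\widetilde A^{-1}=\Iop$.

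I expect the one genuinely substantive step to be the converse of (c): the point is to manufacture a \emph{bounded} right inverse, and peeling the closed kernel off and invoking the open mapping theorem is precisely what makes this work. The converse of (a) requires the minor additional care of promoting ``$\ra(A)$ closed'' to ``$\ra(A)=\vr$'' before Theorem \ref{open}(b) can be applied; everything else is bookkeeping with adjoints.
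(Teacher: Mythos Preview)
Your treatment of (b) and (c) is correct and matches the standard complex argument the paper defers to. The gap is in the converse of (a): you assert that Proposition~\ref{IP30}(a) lets you derive $\kr(A^{\dagger})=\{0\}$ from the hypotheses $\kr(A)=\{0\}$ and $\overline{\ra(A)}=\ra(A)$, and hence upgrade closedness of the range to $\ra(A)=\vr$. But Proposition~\ref{IP30}(a) reads $\ra(A)^{\perp}=\kr(A^{\dagger})$, which ties $\kr(A^{\dagger})$ to $\ra(A)$, not to $\kr(A)$; nothing in the stated hypotheses forces $\kr(A^{\dagger})=\{0\}$. In fact statement (a) as printed cannot be rescued: the unilateral right shift on $\ell^{2}(\mathbb{N},\quat)$ is an isometry, hence injective with closed range, yet it is not surjective and therefore not invertible.

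The paper's own ``proof'' is just a pointer to the complex analogue (Lemma~5.8 of \cite{kub}) without detail. Looking at how the proposition is actually invoked --- Proposition~\ref{SP1} uses it to say ``$S$ is not left invertible if and only if $\ra(S)$ is not closed or $\kr(S)\ne\{0\}$'' --- it is clear that (a) is intended to read ``$A$ is \emph{left} invertible if and only if it is injective with closed range''. That version is both true and exactly what your machinery proves: a left inverse gives bounded-below, hence injective with closed range by Theorem~\ref{NT1}; conversely, invert $A$ on its closed range and compose with the orthogonal projection onto $\ra(A)$, just as you do for the right inverse in (c). You correctly sensed that a step was needed to pass from ``closed range'' to ``range equal to $\vr$''; the resolution is not an additional argument but a correction of the statement.
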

\begin{proof}
The proof is also exactly similar to its complex version. See lemma 5.8 in \cite{kub} for a complex proof.
\end{proof}
\begin{definition}\label{ID1}
Let $A\in\B(\vr)$. A closed subspace $M\subseteq\vr$ is said to be invariant under $A$ if $A(M)\subseteq M$, where $A(M)=\{A\phi~|~\phi\in M\}$.
\end{definition}
Following results hold in any quaternionic normed linear space.
\begin{definition}
Let $X_\quat^R$ be a quaternionic normed linear space. A subset $C$ is said to be totally bounded, if for every $\varepsilon>0$, there exist $N\in\mathbb{N}$ and $\varphi_i\in C:\,i=1,2,3,\cdots,N$ such that 
$$C\subseteq\bigcup_{i=1}^{N}B(\varphi_i;\varepsilon),$$ where $B(\varphi_i;\varepsilon)=\{\phi\in X_\quat^R~\mid~\|\phi-\varphi_i\|_{X_\quat^R}<\varepsilon\}$ for all $i=1,2,3,\cdots,N$.
\end{definition}
\begin{proposition}\label{Com}
Let $C$ be a subset in a right quaternionic normed linear space $X_\quat^R$. Then
\begin{itemize}
\item[(a)] $C$ is compact if and only if $C$ is complete and totally bounded. 
\item[(b)] if $\dim(X_\quat^R)<\infty$,  $C$ is compact if and only if $C$ is closed and bounded.
\end{itemize}
\end{proposition}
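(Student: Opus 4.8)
The plan is to regard $X_\quat^R$ as a metric space under the distance $d(\phi,\psi)=\|\phi-\psi\|_{X_\quat^R}$ induced by its (real-valued) norm; once this is done, both statements become classical metric-space facts whose proofs are insensitive to the non-commutativity of $\quat$, the only quaternionic input being the absolute homogeneity $\|\phi\mathfrak q\|_{X_\quat^R}=|\mathfrak q|\,\|\phi\|_{X_\quat^R}$ of the norm.

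For (a), I would first dispatch the easy implication: if $C$ is compact, the open cover $\{B(\varphi;\varepsilon):\varphi\in C\}$ admits a finite subcover for each $\varepsilon>0$, which is total boundedness, and any Cauchy sequence in $C$ has a subsequence converging in $C$ and therefore converges in $C$, which is completeness. For the converse, assume $C$ complete and totally bounded and let $(\phi_n)$ be an arbitrary sequence in $C$. Using the finite $\tfrac1k$-covers for $k=1,2,\dots$ successively, I would extract by a diagonal argument a subsequence lying in a decreasing chain of balls of radii $1,\tfrac12,\tfrac13,\dots$, hence a Cauchy subsequence; completeness of $C$ forces it to converge in $C$. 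Thus $C$ is sequentially compact, and since $X_\quat^R$ is a metric space, sequential compactness coincides with compactness. This proves (a).

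For (b), "compact $\Rightarrow$ closed and bounded" is immediate from (a) (a totally bounded set is bounded, and a compact subset of a metric space is closed). For "closed and bounded $\Rightarrow$ compact", set $n=\dim(X_\quat^R)$, fix a basis $\{e_1,\dots,e_n\}$ so that every $\phi\in X_\quat^R$ is uniquely $\phi=\sum_{i=1}^n e_i\mathfrak q_i$, and consider the right $\quat$-linear bijection $T:\quat^n\to X_\quat^R$, $T(\mathfrak q_1,\dots,\mathfrak q_n)=\sum_{i=1}^n e_i\mathfrak q_i$. Identifying $\quat^n$ with $\mathbb R^{4n}$ (so that $\|\mathbf q\|_2=(\sum_i|\mathfrak q_i|^2)^{1/2}$ is the Euclidean norm), I would show that $\mathbf q\mapsto\|T\mathbf q\|_{X_\quat^R}$ is continuous on $\mathbb R^{4n}$ — this uses the triangle inequality, the finite expansion, and $\|e_i\mathfrak q_i\|_{X_\quat^R}=|\mathfrak q_i|\,\|e_i\|_{X_\quat^R}$ — and strictly positive on the Euclidean unit sphere, the positivity coming from injectivity of $T$, i.e.\ linear independence of the $e_i$. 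Since that sphere is compact in $\mathbb R^{4n}$ by the classical Heine–Borel theorem, this function attains a positive minimum $a$ and a finite maximum $b$ there, giving $a\|\mathbf q\|_2\le\|T\mathbf q\|_{X_\quat^R}\le b\|\mathbf q\|_2$ for all $\mathbf q$. Hence $T$ and $T^{-1}$ are Lipschitz, so $T$ is a homeomorphism; thus $T^{-1}(C)$ is closed and bounded in $\mathbb R^{4n}$, hence compact by Heine–Borel, and therefore $C=T(T^{-1}(C))$ is compact.

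The step needing the most care is the norm-equivalence argument in (b): one must genuinely verify continuity of $\mathbf q\mapsto\|T\mathbf q\|_{X_\quat^R}$ for the Euclidean topology and that its minimum over the unit sphere is attained and nonzero, since this is the only place where finite-dimensionality and Heine–Borel in $\mathbb R^{4n}$ do real work. Part (a) and the trivial half of (b) are routine transcriptions of the real metric-space proofs.
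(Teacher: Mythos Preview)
Your proposal is correct and is precisely the kind of argument the paper has in mind: the paper's own proof consists solely of the sentence ``The proof is exactly similar to the complex proof'' together with references to Muscat (for (a)) and Kreyszig, Theorem~2.5-3 (for (b)), and what you have written is a faithful expansion of those textbook arguments adapted to the quaternionic setting via the identification $\quat^n\cong\mathbb R^{4n}$. There is nothing to add.
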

\begin{proof}
The proof is exactly similar to the complex proof. For a complex  proof  of (a) and (b), see theorem 6.19 in \cite{Mus} and theorem 2.5-3 in \cite{Kre} respectively. 
\end{proof}
\subsection{Left Scalar Multiplications on $\ur$.}
We shall extract the definition and some properties of left scalar multiples of vectors on $\ur$ from \cite{ghimorper} as needed for the development of the manuscript. The left scalar multiple of vectors on a right quaternionic Hilbert space is an extremely non-canonical operation associated with a choice of preferred Hilbert basis. From the proposition \ref{P2}, $\ur$ has a Hilbert basis
\begin{equation}\label{b1}
\mathcal{O}=\{\varphi_{k}\,\mid\,k\in N\},
\end{equation}
where $N$ is a countable index set.
The left scalar multiplication on $\ur$ induced by $\mathcal{O}$ is defined as the map $\quat\times \ur\ni(\qu,\phi)\longmapsto \qu\phi\in \ur$ given by
\begin{equation}\label{LPro}
\qu\phi:=\sum_{k\in N}\varphi_{k}\qu\langle \varphi_{k}\mid \phi\rangle,
\end{equation}
for all $(\qu,\phi)\in\quat\times \ur$.
\begin{proposition}\cite{ghimorper}\label{lft_mul}
The left product defined in the equation \ref{LPro} satisfies the following properties. For every $\phi,\psi\in \ur$ and $\bfrakp,\qu\in\quat$,
\begin{itemize}
\item[(a)] $\qu(\phi+\psi)=\qu\phi+\qu\psi$ and $\qu(\phi\bfrakp)=(\qu\phi)\bfrakp$.
\item[(b)] $\|\qu\phi\|=|\qu|\|\phi\|$.
\item[(c)] $\qu(\bfrakp\phi)=(\qu\bfrakp)\phi$.
\item[(d)] $\langle\overline{\qu}\phi\mid\psi\rangle
=\langle\phi\mid\qu\psi\rangle$.
\item[(e)] $r\phi=\phi r$, for all $r\in \mathbb{R}$.
\item[(f)] $\qu\varphi_{k}=\varphi_{k}\qu$, for all $k\in N$.
\end{itemize}
\end{proposition}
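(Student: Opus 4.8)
The plan is to verify (a)--(f) by direct computation from the defining series \eqref{LPro}, using only the inner-product axioms (i)--(v), the associative (but non-commutative) arithmetic of $\quat$, the Hilbert-basis expansions of Propositions \ref{P1} and \ref{P2}, and continuity of the inner product and of the right scalar multiplication $\phi\mapsto\phi\bfrakp$. The single recurring technical point is that every series in sight converges absolutely in $\ur$, so that it may be rearranged termwise and interchanged with the inner product and with right multiplication; I would dispose of this at the very start. Indeed, for fixed $\qu\in\quat$ and $\phi\in\ur$ one has $\|\varphi_k\qu\langle\varphi_k\mid\phi\rangle\|=|\qu|\,|\langle\varphi_k\mid\phi\rangle|$ since $\|\varphi_k\|=1$ and the quaternionic norm is multiplicative, and $\sum_{k\in N}|\langle\varphi_k\mid\phi\rangle|^2<\infty$ by Proposition \ref{P1}(c); together with Proposition \ref{P2} this gives absolute convergence of $\sum_{k\in N}\varphi_k\qu\langle\varphi_k\mid\phi\rangle$ in $\ur$.

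Next I would record two elementary identities. First, for any $\qu\in\quat$, $\phi\in\ur$ and any basis vector $\varphi_j$,
$$\langle\varphi_j\mid\qu\phi\rangle=\sum_{k\in N}\langle\varphi_j\mid\varphi_k\rangle\,\qu\,\langle\varphi_k\mid\phi\rangle=\qu\,\langle\varphi_j\mid\phi\rangle,$$
where the first equality uses continuity and additivity of $\langle\varphi_j\mid\cdot\rangle$ together with axiom (iv), and the second uses that $\langle\varphi_j\mid\varphi_k\rangle=\delta_{jk}$ is real, hence central. Taking the quaternionic conjugate and using axiom (i) gives, second, $\langle\overline{\qu}\phi\mid\varphi_j\rangle=\langle\phi\mid\varphi_j\rangle\,\qu$. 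With these in hand: (f) is immediate, $\qu\varphi_k=\sum_{j}\varphi_j\qu\langle\varphi_j\mid\varphi_k\rangle=\varphi_k\qu$. For (a), additivity in $\phi$ follows by applying axiom (iii) termwise, while $\qu(\phi\bfrakp)=\sum_k\varphi_k\qu(\langle\varphi_k\mid\phi\rangle\bfrakp)=\bigl(\sum_k\varphi_k\qu\langle\varphi_k\mid\phi\rangle\bigr)\bfrakp=(\qu\phi)\bfrakp$ by axiom (iv), associativity in $\quat$, and continuity of right multiplication by $\bfrakp$. For (c), the first identity with $\bfrakp$ in place of $\qu$ gives $\langle\varphi_k\mid\bfrakp\phi\rangle=\bfrakp\langle\varphi_k\mid\phi\rangle$, whence $\qu(\bfrakp\phi)=\sum_k\varphi_k\qu\bfrakp\langle\varphi_k\mid\phi\rangle=(\qu\bfrakp)\phi$ by associativity. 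For (e), a real $r$ is central in $\quat$, so $r\phi=\sum_k\varphi_k r\langle\varphi_k\mid\phi\rangle=\sum_k\varphi_k\langle\varphi_k\mid\phi\rangle r=\phi r$ by Proposition \ref{P2} and continuity of right multiplication.

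It remains to do (b) and (d). For (b), Proposition \ref{P1}(c) applied to the vector $\qu\phi$ together with the first identity yields $\|\qu\phi\|^2=\sum_k|\langle\varphi_k\mid\qu\phi\rangle|^2=\sum_k|\qu\langle\varphi_k\mid\phi\rangle|^2=|\qu|^2\sum_k|\langle\varphi_k\mid\phi\rangle|^2=|\qu|^2\|\phi\|^2$. For (d), Proposition \ref{P1}(b) gives $\langle\overline{\qu}\phi\mid\psi\rangle=\sum_k\langle\overline{\qu}\phi\mid\varphi_k\rangle\langle\varphi_k\mid\psi\rangle=\sum_k\langle\phi\mid\varphi_k\rangle\,\qu\,\langle\varphi_k\mid\psi\rangle$ by the second identity, while the same proposition together with the first identity gives $\langle\phi\mid\qu\psi\rangle=\sum_k\langle\phi\mid\varphi_k\rangle\langle\varphi_k\mid\qu\psi\rangle=\sum_k\langle\phi\mid\varphi_k\rangle\,\qu\,\langle\varphi_k\mid\psi\rangle$; comparing proves (d). I do not expect a genuine obstacle here — the argument is bookkeeping. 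The only points demanding care are (b) and (d), where one must extract the scalar $\qu$ on the correct side of each inner product (this is exactly what the two identities above control), and throughout, the invocation of continuity and absolute convergence to justify interchanging infinite sums with inner products and with right scalar multiplication.
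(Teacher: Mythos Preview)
Your proof is correct. The paper does not supply its own proof of this proposition; it simply cites the result from \cite{ghimorper}, so there is nothing to compare your argument against beyond noting that your direct verification from the defining series \eqref{LPro} is exactly the kind of computation one would expect, and each step checks out.
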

\begin{remark}
(1) The meaning of writing $\bfrakp\phi$ is $\bfrakp\cdot\phi$, because the notation from the equation \ref{LPro} may be confusing, when $\ur=\quat$. However, regarding the field $\quat$ itself as a right $\quat$-Hilbert space, an orthonormal basis $\mathcal{O}$ should consist only of a singleton, say $\{\varphi_{0}\}$, with $\mid\varphi_{0}\mid=1$, because we clearly have $\theta=\varphi_{0}\langle\varphi_{0}\mid\theta\rangle$, for all $\theta\in\quat$. The equality from (f) of proposition \ref{lft_mul} can be written as $\bfrakp\varphi_{0}=\varphi_{0}\bfrakp$, for all $\bfrakp\in\quat$. In fact, the left hand may be confusing and it should be understood as $\bfrakp\cdot\varphi_{0}$, because the true equality $\bfrakp\varphi_{0}=\varphi_{0}\bfrakp$ would imply that $\varphi_{0}=\pm 1$. For the simplicity, we are writing $\bfrakp\phi$ instead of writing $\bfrakp\cdot\phi$.\\
(2) Also one can trivially see that $(\bfrakp+\qu)\phi=\bfrakp\phi+\qu\phi$, for all $\bfrakp,\qu\in\quat$ and $\phi\in \ur$.
\end{remark}
Furthermore, the quaternionic left scalar multiplication of linear operators is also defined in \cite{Fab1}, \cite{ghimorper}. For any fixed $\qu\in\quat$ and a given right linear operator $A:\vr\longrightarrow \ur$, the left scalar multiplication of $A$ is defined as a map $\qu A:\vr\longrightarrow \ur$ by the setting
\begin{equation}\label{lft_mul-op}
(\qu A)\phi:=\qu (A\phi)=\sum_{k\in N}\varphi_{k}\qu\langle \varphi_{k}\mid A\phi\rangle,
\end{equation}
for all $\phi\in \vr)$. It is straightforward that $\qu A$ is a right linear operator. If $\qu\phi\in \vr$, for all $\phi\in \vr$, one can define right scalar multiplication of the right linear operator $A:\vr\longrightarrow \ur$ as a map $ A\qu:\vr\longrightarrow \ur$ by the setting
\begin{equation}\label{rgt_mul-op}
(A\qu)\phi:=A(\qu \phi),
\end{equation}
for all $\phi\in D(A)$. It is also right linear operator. One can easily obtain that, if $\qu\phi\in \vr$, for all $\phi\in \vr$ and $\vr$ is dense in $\ur$, then
\begin{equation}\label{sc_mul_aj-op}
(\qu A)^{\dagger}=A^{\dagger}\overline{\qu}~\mbox{~and~}~
(A\qu)^{\dagger}=\overline{\qu}A^{\dagger}.
\end{equation}
\subsection{S-Spectrum}
For a given right linear operator $A:\D(A)\subseteq V_{\quat}^R\longrightarrow V_{\quat}^R$ and $\qu\in\quat$, we define the operator $R_{\qu}(A):\D(A^{2})\longrightarrow\quat$ by  $$R_{\qu}(A)=A^{2}-2\text{Re}(\qu)A+|\qu|^{2}\Iop,$$
where $\qu=q_{0}+\bi q_1 + \bj q_2 + \bk q_3$ is a quaternion, $\text{Re}(\qu)=q_{0}$  and $|\qu|^{2}=q_{0}^{2}+q_{1}^{2}+q_{2}^{2}+q_{3}^{2}.$\\
In the literature, the operator is called pseudo-resolvent since it is not the resolvent operator of $A$ but it is the one related to the notion of spectrum as we shall see in the next definition. For more information, on the notion of $S$-spectrum the reader may consult e.g. \cite{Fab, Fab1, NFC}, and  \cite{ghimorper}.
\begin{definition}
Let $A:\D(A)\subseteq V_{\quat}^R\longrightarrow V_{\quat}^R$ be a right linear operator. The {\em $S$-resolvent set} (also called \textit{spherical resolvent} set) of $A$ is the set $\rho_{S}(A)\,(\subset\quat)$ such that the three following conditions hold true:
\begin{itemize}
\item[(a)] $\ker(R_{\qu}(A))=\{0\}$.
\item[(b)] $\text{ran}(R_{\qu}(A))$ is dense in $V_{\quat}^{R}$.
\item[(c)] $R_{\qu}(A)^{-1}:\text{ran}(R_{\qu}(A))\longrightarrow\D(A^{2})$ is bounded.
\end{itemize}
The \textit{$S$-spectrum} (also called \textit{spherical spectrum}) $\sigma_{S}(A)$ of $A$ is defined by setting $\sigma_{S}(A):=\quat\smallsetminus\rho_{S}(A)$. For a bounded linear operator $A$ we can write the resolvent set as
\begin{eqnarray*}
\rho_S(A)&=& \{\qu\in\quat~|~R_\qu(A)\in\mathcal{G}(V_{\quat}^R)\}\\
&=&\{\qu\in\quat~|~R_\qu(A)~\text{has an inverse in}~\B(V_{\quat}^R)\}\\
&=&\{\qu\in\quat~|~\text{ker}(R_\qu(A))=\{0\}\quad\text{and}\quad \text{ran}(R_\qu(A))=V_\quat^R\}
\end{eqnarray*}
and the spectrum can be written as
\begin{eqnarray*}
\sigma_S(A)&=&\quat\setminus\rho_S(A)\\
&=&\{\qu\in\quat~|~R_\qu(A)~\text{has no inverse in}~\B(V_{\quat}^R)\}\\
&=&\{\qu\in\quat~|~\text{ker}(R_\qu(A))\not=\{0\}\quad\text{or}\quad \text{ran}(R_\qu(A))\not=V_\quat^R\}
\end{eqnarray*}
The right $S$-spectrum $\sigma_r^S(A)$ and the left $S$-spectrum $\sigma_l^S(A)$ are defined respectively as
\begin{eqnarray*}\
\sigma_r^S(A)&=&\{\qu\in\quat~|~R_\qu(A)~~\text{in not right invertible in}~~\B(V_\quat^R)~\}\\
\sigma_l^S(A)&=&\{\qu\in\quat~|~R_\qu(A)~~\text{in not left invertible in}~~\B(V_\quat^R)~\}.
\end{eqnarray*}
The spectrum $\sigma_S(A)$ decomposes into three major disjoint subsets as follows:
\begin{itemize}
\item[(i)] the \textit{spherical point spectrum} of $A$: $$\sigma_{pS}(A):=\{\qu\in\quat~\mid~\ker(R_{\qu}(A))\ne\{0\}\}.$$
\item[(ii)] the \textit{spherical residual spectrum} of $A$: $$\sigma_{rS}(A):=\{\qu\in\quat~\mid~\ker(R_{\qu}(A))=\{0\},\overline{\text{ran}(R_{\qu}(A))}\ne V_{\quat}^{R}~\}.$$
\item[(iii)] the \textit{spherical continuous spectrum} of $A$: $$\sigma_{cS}(A):=\{\qu\in\quat~\mid~\ker(R_{\qu}(A))=\{0\},\overline{\text{ran}(R_{\qu}(A))}= V_{\quat}^{R}, R_{\qu}(A)^{-1}\notin\B(V_{\quat}^{R}) ~\}.$$
\end{itemize}
If $A\phi=\phi\qu$ for some $\qu\in\quat$ and $\phi\in V_{\quat}^{R}\smallsetminus\{0\}$, then $\phi$ is called an \textit{eigenvector of $A$ with right eigenvalue} $\qu$. The set of right eigenvalues coincides with the point $S$-spectrum, see \cite{ghimorper}, proposition 4.5.
\end{definition}
\begin{proposition}\cite{Fab2, ghimorper}\label{PP1}
For $A\in\B(\vr)$, the resolvent set $\rho_S(A)$ is a non-empty open set and the spectrum $\sigma_S(A)$ is a non-empty compact set.
\end{proposition}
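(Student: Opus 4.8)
The plan is to mimic the classical Neumann-series argument, with $R_\qu(A)$ playing the role that $A-\lambda I$ plays in the complex theory, while being careful that $\qu\mapsto R_\qu(A)$ depends on $\qu$ only through the two real parameters $\mathrm{Re}(\qu)$ and $|\qu|$, so no left-multiplication or noncommutativity issues arise. First I would show $\rho_S(A)$ is open. Fix $\qu_0\in\rho_S(A)$, so $R_{\qu_0}(A)\in\mathcal{G}(\vr)$ by the characterization in the definition, and $R_{\qu_0}(A)^{-1}\in\B(\vr)$ by Theorem \ref{open}(b). For $\qu$ near $\qu_0$ write
\[
R_\qu(A)=R_{\qu_0}(A)+\bigl(2\,\mathrm{Re}(\qu_0)-2\,\mathrm{Re}(\qu)\bigr)A+\bigl(|\qu|^2-|\qu_0|^2\bigr)\Iop .
\]
The perturbation term, call it $E_\qu$, satisfies $\|E_\qu\|\le 2|\mathrm{Re}(\qu)-\mathrm{Re}(\qu_0)|\,\|A\|+\bigl||\qu|^2-|\qu_0|^2\bigr|$, which tends to $0$ as $\qu\to\qu_0$ (both $\mathrm{Re}$ and $|\cdot|$ are continuous on $\quat$). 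Hence for $\qu$ in a small ball we have $\|R_{\qu_0}(A)^{-1}E_\qu\|<1$, and then $R_\qu(A)=R_{\qu_0}(A)\bigl(\Iop+R_{\qu_0}(A)^{-1}E_\qu\bigr)$ is invertible in $\B(\vr)$ by Proposition \ref{PP3} (the Neumann series), so $\qu\in\rho_S(A)$. This proves $\rho_S(A)$ is open, hence $\sigma_S(A)$ is closed.

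Next I would show $\sigma_S(A)$ is bounded, indeed contained in a ball of radius depending on $\|A\|$. If $|\qu|>2\|A\|$ (a convenient crude bound), then $2|\mathrm{Re}(\qu)|\le 2|\qu|$ and one can rewrite $R_\qu(A)=|\qu|^2\bigl(\Iop-B_\qu\bigr)$ with $B_\qu=\bigl(2\,\mathrm{Re}(\qu)A-A^2\bigr)/|\qu|^2$, and estimate $\|B_\qu\|\le \bigl(2\|A\|+\|A\|^2/|\qu|\bigr)/|\qu|$, which is $<1$ once $|\qu|$ is large enough (say $|\qu|>\|A\|^2+2\|A\|+1$). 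Then Proposition \ref{PP3} again gives $R_\qu(A)\in\mathcal{G}(\vr)$, so $\qu\in\rho_S(A)$. Thus $\sigma_S(A)$ is a closed bounded subset of $\quat\cong\R^4$, i.e. compact; equivalently $\rho_S(A)$ is nonempty since it contains the complement of a large ball.

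The remaining—and genuinely substantive—point is that $\sigma_S(A)$ is \emph{nonempty}; in the complex case this is where Liouville's theorem on the resolvent enters. Here the clean route is to invoke the quaternionic $S$-functional calculus machinery: the map $\qu\mapsto (R_\qu(A))^{-1}$ on $\rho_S(A)$ is an operator-valued slice-regular (or $S$-analytic) function, and if $\rho_S(A)=\quat$ it would be entire and vanish at infinity (from the estimate $\|(R_\qu(A))^{-1}\|\le 1/(|\qu|^2(1-\|B_\qu\|))\to 0$), forcing it to be identically $0$ by a quaternionic Liouville theorem—a contradiction. Since developing slice-regularity from scratch is outside the scope here, I would instead simply \emph{cite} this fact from \cite{ghimorper, Fab2} (as the proposition's attribution already does): the nonemptiness and compactness of $\sigma_S(A)$ for bounded $A$ are established there, and the openness argument above is the only part needing the in-house lemmas. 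So the proof reduces to: openness via Propositions \ref{PP3} and Theorem \ref{open}(b) as spelled out; boundedness via the large-$|\qu|$ Neumann series; and nonemptiness by reference to \cite{ghimorper, Fab2}. The main obstacle, if one insisted on self-containedness, is exactly the nonemptiness—it cannot be gotten by elementary perturbation bounds alone and requires the analytic structure of the $S$-resolvent.
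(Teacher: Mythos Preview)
Your proposal is correct in substance, but note that the paper does not actually prove this proposition at all: it is stated with attribution to \cite{Fab2, ghimorper} and no argument is given. So there is no ``paper's own proof'' to compare against; you have in fact supplied more than the paper does.

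Your sketch is sound. The openness argument via the Neumann series (Proposition~\ref{PP3}) and the factorization $R_\qu(A)=R_{\qu_0}(A)(\Iop+R_{\qu_0}(A)^{-1}E_\qu)$ is exactly the standard route and works as written. The boundedness argument is also correct in spirit; the crude threshold you quote could be sharpened (the usual statement is $\sigma_S(A)\subseteq\{\,|\qu|\le\|A\|\,\}$, obtained by a slightly more careful estimate or a different factorization), but any finite bound suffices for compactness. Most importantly, you correctly isolate nonemptiness as the one genuinely nontrivial ingredient, requiring the slice-regular structure of the $S$-resolvent and a quaternionic Liouville theorem, and you rightly defer to \cite{Fab2, ghimorper} for it---which is precisely what the paper itself does for the entire proposition.
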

\begin{remark}\label{R1}
For $A\in\B(\vr)$, since $\sigma_S(A)$ is a non-empty compact set so is its boundary. That is, $\partial\sigma_S(A)=\partial\rho_S(A)\not=\emptyset$.
\end{remark}
\begin{proposition}\label{SP1}
Let $A\in\B(\vr)$.
\begin{eqnarray}
\sigma_l^S(A)&=&\{\qu\in\quat~~|~~\ra(R_\qu(A))~~\text{is closed or}~~\kr(R_\qu(A))\not=\{0\}\}\label{SE1}.\\
\sigma_r^S(A)&=&\{\qu\in\quat~~|~~\ra(R_\qu(A))~~\text{is closed or}~~\kr(R_{\oqu}(A^\dagger))\not=\{0\}\}\label{SE2}.
\end{eqnarray}
\end{proposition}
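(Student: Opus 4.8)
The plan is to rewrite membership in $\sigma_l^S(A)$ and $\sigma_r^S(A)$ as the failure of left, resp.\ right, invertibility of the pseudo-resolvent $R_\qu(A)=A^{2}-2\text{Re}(\qu)A+|\qu|^{2}\Iop$, and then feed this through the structure results of Section~3. Two preliminary facts carry the argument. First, because the coefficients of $R_\qu(A)$ are real and conjugation is trivial on $\R$, one has $(rT)^{\dagger}=rT^{\dagger}$ for $r\in\R$, so taking adjoints term by term (in particular $(A^{2})^{\dagger}=(A^{\dagger})^{2}$ and $\Iop^{\dagger}=\Iop$) and using $\text{Re}(\oqu)=\text{Re}(\qu)$, $|\oqu|=|\qu|$ gives
\[
R_\qu(A)^{\dagger}=(A^{\dagger})^{2}-2\text{Re}(\qu)A^{\dagger}+|\qu|^{2}\Iop=R_{\oqu}(A^{\dagger}).
\]
Second, for any $T\in\B(\vr)$ I would record the elementary equivalence: $\ra(T)=\vr$ if and only if $\ra(T)$ is closed and $\kr(T^{\dagger})=\{0\}$. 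Indeed, if $\ra(T)=\vr$ this is clear and $\kr(T^{\dagger})=\ra(T)^{\perp}=\{0\}$ by Proposition~\ref{IP30}(a); conversely, if $\ra(T)$ is closed then Proposition~\ref{ort}(b) gives $\vr=\ra(T)\oplus\ra(T)^{\perp}=\ra(T)\oplus\kr(T^{\dagger})$, which collapses to $\ra(T)$ once $\kr(T^{\dagger})=\{0\}$.

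For \eqref{SE2}: by Proposition~\ref{NP2}(c), $R_\qu(A)$ is right invertible in $\B(\vr)$ if and only if it is surjective, i.e.\ $\ra(R_\qu(A))=\vr$. By the equivalence above applied to $T=R_\qu(A)$ together with the adjoint identity, this is the same as saying $\ra(R_\qu(A))$ is closed and $\kr(R_{\oqu}(A^{\dagger}))=\{0\}$. Since $\qu\in\sigma_r^S(A)$ exactly when $R_\qu(A)$ is not right invertible, negating the conjunction yields \eqref{SE2}.

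For \eqref{SE1}: by Proposition~\ref{NP2}(b), $R_\qu(A)$ is left invertible if and only if $R_\qu(A)^{\dagger}=R_{\oqu}(A^{\dagger})$ is right invertible, hence by Proposition~\ref{NP2}(c) if and only if $R_{\oqu}(A^{\dagger})$ is surjective. Applying the equivalence above to $T=R_{\oqu}(A^{\dagger})$ — noting $T^{\dagger}=R_\qu(A)$ — this says $\ra(R_{\oqu}(A^{\dagger}))$ is closed and $\kr(R_\qu(A))=\{0\}$; and by Proposition~\ref{NP1}, $\ra(R_{\oqu}(A^{\dagger}))$ is closed if and only if $\ra(R_\qu(A))$ is closed. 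Negating gives \eqref{SE1}.

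Beyond these two facts the proof is bookkeeping, and the non-commutativity of $\quat$ plays no role because $R_\qu(A)$ has real coefficients; in particular $R_\qu=R_{\oqu}$ as operations, so the bar in \eqref{SE1}--\eqref{SE2} is only a reminder that the adjoint is involved. The one point needing care is organizing the passage between $R_\qu(A)$ and its adjoint $R_{\oqu}(A^{\dagger})$ so that the closed-range and kernel conditions land on the operator displayed in each identity — which is exactly the job of Proposition~\ref{NP1} (closed range transfers to the adjoint) and Proposition~\ref{IP30}(a) (cokernel equals kernel of the adjoint).
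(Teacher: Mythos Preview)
Your proof is correct and follows essentially the same route as the paper's: both reduce \eqref{SE2} to ``right invertible $\Leftrightarrow$ surjective $\Leftrightarrow$ closed range and $\kr(R_\qu(A)^{\dagger})=\{0\}$'', and both handle \eqref{SE1} via the dual characterization of left invertibility. The only cosmetic difference is that for \eqref{SE1} the paper invokes ``$S$ left invertible iff injective with closed range'' in one stroke (citing Proposition~\ref{NP2}), while you unwind this explicitly through $R_\qu(A)^{\dagger}=R_{\oqu}(A^{\dagger})$, Proposition~\ref{NP2}(b)--(c), and Proposition~\ref{NP1}.
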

\begin{proof}
Let $A\in\B(\vr)$ and $\qu\in\quat$. Set $S=R_\qu(A)\in\B(\vr)$. By proposition \ref{NP2}, $S$ is not left invertible if and only if $\ra(S)$ is not closed or $\kr(S)\not=\{0\}$. Thus we have equation \ref{SE1}. Again by proposition \ref{NP2}, $A$ is not right invertible if and only if $\ra(S)\not=\vr$. $\ra(S)\not=\vr$ if and only if $\ra(S)\not=\overline{\ra(S)}$ or $\overline{\ra(S)}\not=\vr$. That is, $\ra(S)\not=\vr$ if and only if $\ra(S)\not=\overline{\ra(S)}$ or $\kr(S^\dagger)^\perp\not=\vr$. Hence we have equation \ref{SE2}. 
\end{proof}
\section{Quaternionic Compact Operators}
A systematic study of quaternionic compact operators has not appeared in the literature. In this regard, in this section, as needed for our purpose, we provide certain significant results about compact right linear operators on $\vr$.
\begin{definition}\label{PD1}
Let $\vr$ and $\ur$ be right quaternionic  Hilbert spaces. A bounded operator $K:\vr\longrightarrow\ur$ is compact if $K$ maps bounded sets into precompact sets. That is, $\overline{K(U)}$ is compact in $\ur$, where $U=\{\phi\in\vr~|~\|\phi\|_{\vr}\leq1\}$. Equivalently, for all bounded sequences $\{\phi_n\}_{n=1}^\infty$ in $\vr$ the sequence $\{K\phi_n\}_{n=0}^\infty$ has a convergence subsequence in $\ur$ \cite{Fa}.
\end{definition}
We denote the set of all compact operators from $\vr$ to $\ur$ by $\B_0(\vr,\ur)$ and the compact operators from $\vr$ from $\vr$ will be denoted by $\B_0(\vr)$.
\begin{proposition}\label{com1} The following statements are true:
\begin{itemize}
\item [(a)] If $(\qu,A)\in\quat\times\B_0(\vr,\ur)$, then $\qu A,A\qu\in\B_0(\vr,\ur)$ which are defined by the equations \ref{lft_mul-op} and \ref{rgt_mul-op} respectively.
\item [(b)] $\B_0(\vr,\ur)$ is a vector space under left-scalar multiplication.
\item [(c)] If $\{A_n\}\subseteq\B(\vr,\ur)$ and $A\in\B_0(\vr,\ur)$ such that $\|A_n-A\|\longrightarrow 0$, then $A\in\B_0(\vr,\ur)$.
\item [(d)] If $A\in\B(\ur)$, $B\in\B(\ur)$ and $K\in\B_0(\vr,\ur)$, then $AK$ and $KB$ are compact operators.
\end{itemize}
\end{proposition}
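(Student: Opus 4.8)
The plan is to establish the four claims of Proposition~\ref{com1} essentially as in the complex setting, using only the tools assembled in the preceding sections; the quaternionic flavour enters solely through the left scalar multiplication, whose relevant properties are recorded in Propositions~\ref{lft_mul} and (for operators) in equations~\eqref{lft_mul-op}--\eqref{rgt_mul-op}. Throughout I would use the sequential characterization of compactness from Definition~\ref{PD1} together with Proposition~\ref{Com}(a) (compact $\Leftrightarrow$ complete and totally bounded), since sequential arguments transfer verbatim from the complex case once one knows that bounded sequences in a separable quaternionic Hilbert space behave as expected.

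First I would prove (a). Fix $(\qu,A)\in\quat\times\B_0(\vr,\ur)$ and a bounded sequence $\{\phi_n\}$ in $\vr$. Since $A$ is compact, pass to a subsequence along which $A\phi_{n_j}\to\psi$ in $\ur$. By Proposition~\ref{lft_mul}(b), $\|\qu(A\phi_{n_j})-\qu\psi\|=|\qu|\,\|A\phi_{n_j}-\psi\|\to 0$, so $(\qu A)\phi_{n_j}=\qu(A\phi_{n_j})\to\qu\psi$; hence $\qu A$ is compact. For $A\qu$, note $\{\qu\phi_n\}$ is again bounded in $\vr$ by Proposition~\ref{lft_mul}(b), so compactness of $A$ yields a convergent subsequence of $\{A(\qu\phi_n)\}=\{(A\qu)\phi_n\}$; hence $A\qu\in\B_0(\vr,\ur)$. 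Part (b) then follows: $\B_0(\vr,\ur)\subseteq\B(\vr,\ur)$ is closed under addition by a standard diagonal-subsequence argument (given bounded $\{\phi_n\}$, extract a subsequence making both $A\phi_n$ and $B\phi_n$ converge, then $(A+B)\phi_n$ converges), and (a) gives closure under the left scalar multiplication defined in~\eqref{lft_mul-op}; since $\B(\vr,\ur)$ is itself a vector space under that multiplication, $\B_0(\vr,\ur)$ is a subspace.

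For (c) I would argue that $\B_0(\vr,\ur)$ is closed in the operator norm; note the statement as typed has the hypotheses on $A_n$ and $A$ interchanged, and I would prove the intended version, namely: if $A_n\in\B_0(\vr,\ur)$ and $\|A_n-A\|\to 0$ with $A\in\B(\vr,\ur)$, then $A\in\B_0(\vr,\ur)$. Using Proposition~\ref{Com}(a) it suffices to show $\overline{A(U)}$ is totally bounded (completeness being automatic from closedness in the complete space $\ur$). Given $\eps>0$, choose $n$ with $\|A-A_n\|<\eps/2$; since $A_n$ is compact, $A_n(U)$ is totally bounded, so it is covered by finitely many balls $B(\psi_i;\eps/2)$, and then the balls $B(\psi_i;\eps)$ cover $A(U)$ because $\|A\phi-A_n\phi\|<\eps/2$ for $\phi\in U$. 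Hence $A(U)$, and so its closure, is totally bounded. Finally, (d) is immediate from the sequential definition: if $\{\phi_n\}$ is bounded in $\vr$ and $K\in\B_0(\vr,\ur)$, then $K\phi_{n_j}\to\psi$ along a subsequence, and continuity of $A\in\B(\ur)$ gives $AK\phi_{n_j}\to A\psi$; for $KB$ with $B\in\B(\vr)$ (I would read the second ``$\B(\ur)$'' as $\B(\vr)$, as forced by composability $KB:\vr\to\ur$), boundedness of $B$ makes $\{B\phi_n\}$ bounded in $\vr$, so $\{KB\phi_n\}$ has a convergent subsequence.

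None of the steps presents a genuine obstacle; the only points requiring care are bookkeeping ones. The subtlety worth flagging is that the left scalar multiplication on operators is basis-dependent and is not a two-sided module action, so I must invoke Proposition~\ref{lft_mul}(b) (norm multiplicativity of $\qu\cdot$) rather than pretending $\qu A$ behaves like a scalar multiple in a complex space; once that isometry-up-to-$|\qu|$ fact is in hand, every convergence argument goes through unchanged. The other thing to handle cleanly is the apparent typographical transposition of hypotheses in (c) and the domain mismatch in (d); I would simply prove the mathematically sensible statements, which are the ones actually used later in Atkinson's theorem and the Calkin-algebra construction.
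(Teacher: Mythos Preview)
Your proposal is correct and follows essentially the same approach as the paper: parts (a), (b), and (d) use the identical sequential argument (the paper, like you, silently reads $B\in\B(\vr)$ in (d)), and for (c) both you and the paper reduce to total boundedness of $\overline{A(U)}$ via Proposition~\ref{Com}(a) and a covering of $A_n(U)$, the only cosmetic difference being that the paper runs an $\varepsilon/6$ estimate to cover the closure directly while you cover $A(U)$ with $\varepsilon/2$ balls and then pass to the closure. Your explicit flagging of the interchanged hypotheses in (c) and the domain typo in (d) matches exactly what the paper's own proof tacitly assumes.
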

\begin{proof}
Let $\{\phi_n\}\subseteq\vr$ be a bounded sequence. Then $\{A\phi_n\}$ has a convergent subsequence. Thus $\{\qu A\phi_n\}$ also has a convergent subsequence, and so $\qu A$ is a compact operator. Now $\{\qu\phi_n\}\subseteq\vr$ is also bounded sequence. Thus $\{ (A\qu)\phi_n\}$ has a convergent subsequence, as $A$ is compact. Therefore point (a) follows. And point (b) is straightforward.\\
Now suppose that $\{A_n\}\subseteq\B_0(\vr,\ur)$ and $A\in\B(\vr,\ur)$ such that $\|A_n-A\|\longrightarrow 0$ and let $\varepsilon>0$. Then by proposition \ref{Com}, it is enough to show that $\overline{A(U)}$ is totally bounded, as $\overline{A(U)}$ is a complete set, where $U=\{\phi\in\vr~|~\|\phi\|_{\vr}\leq1\}$. Let $\varepsilon>0$, then there exists $N\in\mathbb{N}$ such that $\|A-A_n\|<\varepsilon/6$, for all $n>N$. Since $A_n$ is compact, there are $\phi_j\in U:\,j=1,2,3,\cdots,m$ such that 
$$A_n(U)\subseteq\overline{A_n(U)}\subseteq\bigcup_{i=1}^mB(A_n\phi_j;\varepsilon/6);$$ where  $B(A_n\phi_j;\varepsilon/6)=\{A_n\phi~\mid~\|A_n\phi-A_n\phi_j\|_{\ur}<\varepsilon/6\}$, for all $j=1,2,3,\cdots,m$ and $n\in\mathbb{N}$. Let $\psi\in\overline{A(U)}$. Then there exists
a sequence $\{\phi^{(\ell)}\}\subseteq U$ such that $A\phi^{(\ell)}\longrightarrow\psi$ as $\ell\longrightarrow\infty$. So for each $(n,\ell)\in\mathbb{N}^2$, there is a $\phi_j\in U$ such that $\|A_n\phi_j-A_n\phi^{(\ell)}\|_{\ur}<\varepsilon/6$ as $A_n\phi^{(\ell)}\in A_n(U)$, for all $(n,\ell)\in\mathbb{N}^2$. Therefore, for each $(n,\ell)\in\mathbb{N}^2$ with $n>N$,
\begin{eqnarray*}
\|A\phi_j-A\phi^{(\ell)}\|_{\ur}&\leq&\|A\phi_j-A_n\phi_j\|_{\ur}+\|A_n\phi_j-A_n\phi^{(\ell)}\|_{\ur}+\|A_n\phi^{(\ell)}-A\phi^{(\ell)}\|_{\ur}\\
~&<&2\|A-A_n\|+\varepsilon/6\\
~&<&\varepsilon/2.
\end{eqnarray*}
Now taking limit $\ell\longrightarrow\infty$ both side of the inequality $\|A\phi_j-A\phi^{(\ell)}\|_{\ur}<\varepsilon/2$, gives
$\|A\phi_j-\psi\|_{\ur}\leq\varepsilon/2<\varepsilon$ as the norm $\|\cdot\|$ is a continuous maps. Thus $$\displaystyle\overline{A(U)}\subseteq\bigcup_{i=1}^mB(A\phi_j;\varepsilon).$$ This concludes the result of (c).\\
Now suppose that $A\in\B(\ur)$, $B\in\B(\vr)$ and $K\in\B_0(\vr,\ur)$. Then for any bounded sequence $\{\phi_n\}\subseteq\vr$, $\{K\phi_n\}\subseteq\ur$ has a convergent subsequence $\{K\phi_{n_{k}}\}$, as $K$ is compact. Thus $\{AK\phi_{n_{k}}\}$ is a convergent subsequence of $\{AK\phi_n\}$, because $A$ is continuous. That is $AK$ is compact. In a similar fashion, we can obtain $KB$ is compact. Hence the results hold.
\end{proof}
\begin{definition}\label{PD2}
An operator $A:\vr\longrightarrow\ur$ is said to be of finite rank if $\text{ran}(A)\subseteq\ur$ is finite dimensional.
\end{definition}
\begin{lemma}\label{Lfr}
Following statements are equivalent:
\begin{itemize}
\item [(a)] The operator $A:\vr\longrightarrow\ur$ is a finite rank operator.
\item [(b)] For each $\phi\in\vr$, there exist $(v_i,u_i)\in\vr\times\ur:\,i=1,2,3,\cdots,n$ such that 
\be\label{fr} 
\langle u_i\mid u_j\rangle_{\ur}=\delta_{ij}~~\mbox{and}~~A\phi=\sum_{i=1}^{n}u_i\langle v_i\mid \phi\rangle_{\vr},
\en
where $\delta_{ij}$ is Kronecker delta.
\end{itemize}
\end{lemma}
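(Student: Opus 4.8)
The plan is to prove the two implications of the equivalence separately, the direction (b)$\Rightarrow$(a) being immediate and (a)$\Rightarrow$(b) being the substantive one. First I would dispose of (b)$\Rightarrow$(a): if every $\phi\in\vr$ can be written as $A\phi=\sum_{i=1}^n u_i\langle v_i\mid\phi\rangle_{\vr}$ with the $u_i$ fixed (independent of $\phi$) and orthonormal, then $\ra(A)$ is contained in the right $\quat$-linear span of $u_1,\dots,u_n$, hence $\dim\ra(A)\le n<\infty$, so $A$ has finite rank. (Here one should read the statement of (b) as asserting a \emph{single} family $(v_i,u_i)_{i=1}^n$ that works for all $\phi$ simultaneously; this is the natural reading and the one that makes the lemma true.)

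For (a)$\Rightarrow$(b), suppose $\ra(A)$ is finite dimensional, say of dimension $n$. The key step is to produce an orthonormal Hilbert basis $u_1,\dots,u_n$ of $\ra(A)$: since $\ra(A)$ is a finite-dimensional subspace of the right quaternionic Hilbert space $\ur$ it is closed (Proposition \ref{ort}(c)) and is itself a right quaternionic Hilbert space, so by Proposition \ref{P2} it admits a finite Hilbert basis, which can be taken orthonormal, giving $\langle u_i\mid u_j\rangle_{\ur}=\delta_{ij}$. Then for each $\phi\in\vr$, Proposition \ref{P2} applied in $\ra(A)$ yields the unique decomposition
\[
A\phi=\sum_{i=1}^n u_i\langle u_i\mid A\phi\rangle_{\ur}.
\]
Now I would absorb the operator into the coefficient functionals: by the defining relation \eqref{Ad1} for the adjoint, $\langle u_i\mid A\phi\rangle_{\ur}=\langle A^\dagger u_i\mid\phi\rangle_{\vr}$, and $A^\dagger$ is bounded (by Theorem \ref{open} together with the standard fact that a bounded right linear operator has a bounded adjoint, already in the literature cited). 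Setting $v_i:=A^\dagger u_i\in\vr$ gives
\[
A\phi=\sum_{i=1}^n u_i\langle v_i\mid\phi\rangle_{\vr},
\]
which is exactly \eqref{fr}, with the same $(v_i,u_i)$ for every $\phi$.

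The main obstacle — really the only point needing care — is the bookkeeping of sides: one must make sure that the expansion coefficients land on the correct side so that the map $\phi\mapsto u_i\langle v_i\mid\phi\rangle_{\vr}$ is right $\quat$-linear, and that passing from $\langle u_i\mid A\phi\rangle_{\ur}$ to $\langle A^\dagger u_i\mid\phi\rangle_{\vr}$ is legitimate, i.e. that $u_i\in\D(A^\dagger)$. The latter holds because $A$ is bounded, hence $\D(A^\dagger)=\ur$. One should also note that the left scalar multiplication on $\vr$ and the associated subtleties (Proposition \ref{lft_mul}) play no role here, since every quaternionic coefficient appears on the right of a vector throughout; this is why the proof is a faithful transcription of the complex argument.
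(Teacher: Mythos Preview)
Your proof is correct, and the direction (a)$\Rightarrow$(b) is exactly the paper's argument: pick an orthonormal basis $e_1,\dots,e_n$ of $\ra(A)$, expand $A\phi=\sum_i e_i\langle e_i\mid A\phi\rangle_{\ur}$, and set $u_i=e_i$, $v_i=A^\dagger e_i$.

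For (b)$\Rightarrow$(a) your route is simpler than the paper's. You argue only the containment $\ra(A)\subseteq\text{right span}\{u_1,\dots,u_n\}$, which immediately gives $\dim\ra(A)\le n$. The paper instead aims for the stronger conclusion that $\{u_1,\dots,u_n\}$ is an orthonormal \emph{basis} of $\ra(A)$ (a fact it uses in the next lemma), asserting in particular that each $u_i\in\ra(A)$ and then verifying linear independence. That extra content is not needed for the lemma as stated, and the assertion $u_i\in\ra(A)$ is not justified there (indeed it can fail if some $v_i=0$); your containment argument sidesteps this entirely. So your approach is both shorter and avoids an unproven claim, at the cost of not delivering the sharpened statement the paper wants for its sequel.
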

\begin{proof}
Suppose that (a) holds. Then $\dim(\ra(A))<\infty$ and so there exists an orthonormal basis $e_i:\,i=1,2,3,\cdots,n$ for $\ra(A)$. Thus for each $\phi\in\vr$, $$A\phi=\sum_{i=1}^{n}e_i\langle e_i \mid A\phi\rangle_{\ur}=\sum_{i=1}^{n}e_i\langle A^\dagger e_i\mid \phi\rangle_{\vr}.$$ Point (b) follows as taking $u_i=e_i$ and $v_i=A^\dagger e_i$. On the other hand, suppose (b) holds.  Then it immediately follows that $\ra(A)=\text{right span over}~\, \quat\{u_i\,\mid\,i=1,2,3,\cdots,n\}$ as $u_i\in\ra(A)$, for all $i=1,2,3,\cdots,n$. Assume that $$\exists\,\qu_i\in\quat:~i=1,2,3,\cdots,n \mbox{~~such that~~}\sum_{i=1}^{n}u_i\qu_i=0.$$ Since $\sum_{i=1}^{n}(A^\dagger u_i)\qu_i=0$ and there exists $\phi_i\in\vr$ such that $u_i=A\phi_i$, for all $i=1,2,3,\cdots,n$, we have for each $j=1,2,3,\cdots,n$, $$\overline{\qu}_j=\sum_{i=1}^{n}\overline{\qu}_i\langle u_i\mid A\phi_j\rangle_{\ur}=\sum_{i=1}^{n}\langle (A^\dagger u_i)\qu_i\mid \phi_j\rangle_{\vr}=0.$$ That is, the set $\{u_i\,\mid\,i=1,2,3,\cdots,n\}$ is linearly independent. Thus $\{u_i\,\mid\,i=1,2,3,\cdots,n\}$ is an orthonormal basis for $\ra(A)$, and the point (a) holds. Hence the lemma follows. 
\end{proof}
\begin{lemma}\label{Pfr}
In the lemma \ref{Lfr}, the set $\{u_i\,\mid\,i=1,2,3,\cdots,n\}$ of vectors can be chosen to be an \textit{orthonormal basis} for $\ra(A)$, and the adjoint of $A$ can be written as \begin{equation}\label{Adgr}
A^\dagger\psi=\sum_{i=1}^{n}v_i\langle u_i\mid \psi\rangle_{\ur},
\end{equation} for all $\psi\in\ur$. Furthermore, $A^\dagger u_i=v_i$ for all $i=1,2,3,\cdots,n$ and the set $\{v_i\,\mid\,i=1,2,3,\cdots,n\}$ of vectors can be chosen to be a \textit{basis} for $\ra(A^\dagger)$. 
\end{lemma}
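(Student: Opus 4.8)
The plan is to leverage Lemma~\ref{Lfr} directly: we already know that a finite rank operator $A$ admits a representation $A\phi=\sum_{i=1}^n u_i\langle v_i\mid\phi\rangle_{\vr}$ with $\{u_i\}$ orthonormal, so the first half of the claim --- that $\{u_i\}$ can be taken to be an orthonormal basis for $\ra(A)$ --- is really just the content of the equivalence proved there. So the only genuinely new things to verify are the formula \eqref{Adgr} for $A^\dagger$, the identity $A^\dagger u_i=v_i$, and the assertion that $\{v_i\}$ is a basis for $\ra(A^\dagger)$.

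First I would verify \eqref{Adgr} by a direct inner-product computation. For arbitrary $\phi\in\vr$ and $\psi\in\ur$, using property (iv) and (v) of the quaternionic inner product together with the representation of $A$,
\[
\langle\psi\mid A\phi\rangle_{\ur}=\Big\langle\psi\ \Big|\ \sum_{i=1}^n u_i\langle v_i\mid\phi\rangle_{\vr}\Big\rangle_{\ur}=\sum_{i=1}^n\langle\psi\mid u_i\rangle_{\ur}\langle v_i\mid\phi\rangle_{\vr}=\Big\langle\sum_{i=1}^n v_i\langle u_i\mid\psi\rangle_{\ur}\ \Big|\ \phi\Big\rangle_{\vr},
\]
where in the last step one must be careful with the non-commutativity: $\langle\psi\mid u_i\rangle_{\ur}\langle v_i\mid\phi\rangle_{\vr}=\overline{\langle u_i\mid\psi\rangle_{\ur}}\,\langle v_i\mid\phi\rangle_{\vr}=\langle v_i\langle u_i\mid\psi\rangle_{\ur}\mid\phi\rangle_{\vr}$ by property (v). Comparing with the defining relation \eqref{Ad1} and invoking uniqueness of the adjoint gives \eqref{Adgr}. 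Then $A^\dagger u_i=\sum_j v_j\langle u_j\mid u_i\rangle_{\ur}=\sum_j v_j\delta_{ji}=v_i$ is immediate from orthonormality of $\{u_i\}$.

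Next, the statement that $\{v_i\,\mid\,i=1,\dots,n\}$ spans $\ra(A^\dagger)$: from \eqref{Adgr} every $A^\dagger\psi$ is a right-linear combination of the $v_i$, so $\ra(A^\dagger)\subseteq\operatorname{right\,span}_{\quat}\{v_i\}$; conversely $v_i=A^\dagger u_i\in\ra(A^\dagger)$, giving equality. To extract an actual \emph{basis}, I would discard dependent vectors: by the argument already used in the proof of Lemma~\ref{Lfr}, if $\sum_i v_i\qu_i=0$ then applying $A$ and pairing against $u_j$ forces $\qu_j=0$, so in fact the full list $\{v_i\}$ is already linearly independent whenever $\{u_i\}$ is an orthonormal basis of $\ra(A)$; hence it is a basis for $\ra(A^\dagger)$ (and in particular $\dim\ra(A^\dagger)=\dim\ra(A)=n$). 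Alternatively one observes $\ra(A^\dagger)$ is closed since it is finite-dimensional (Proposition~\ref{ort}(c)), consistent with Proposition~\ref{NP1}.

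The main obstacle --- and it is a mild one --- is bookkeeping with the non-commutative scalars when moving quaternionic coefficients across the inner product in the computation of $A^\dagger$: one has to place each scalar on the correct side and use properties (iv)--(v) rather than blindly factoring. Beyond that, everything is a routine consequence of Lemma~\ref{Lfr}, the uniqueness of the adjoint from \eqref{Ad1}, and the finite-dimensionality facts in Proposition~\ref{ort}.
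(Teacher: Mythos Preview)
Your proposal is essentially the paper's proof: the derivation of \eqref{Adgr} via the inner-product computation, the identity $A^\dagger u_i=v_i$ from orthonormality, and the spanning argument for $\ra(A^\dagger)$ all match the paper exactly.

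The one step that needs sharpening is your justification of the linear independence of $\{v_i\}$. The phrase ``applying $A$ and pairing against $u_j$'' does not quite do it: if you literally compute $\langle u_j\mid A(\sum_i v_i\qu_i)\rangle_{\ur}$, you recover only $\langle v_j\mid\sum_i v_i\qu_i\rangle_{\vr}=0$, a tautology that gives no information about the individual $\qu_j$. The paper instead uses the already-established identity $v_i=A^\dagger u_i$ to rewrite $\sum_i v_i\qu_i=0$ as $A^\dagger\bigl(\sum_i u_i\qu_i\bigr)=0$, so that $\sum_i u_i\qu_i\in\kr(A^\dagger)=\ra(A)^\perp$ by Proposition~\ref{IP30}(a); since each $u_i\in\ra(A)$, this forces $\sum_i u_i\qu_i=0$, and orthonormality of $\{u_i\}$ then yields $\qu_i=0$ for every $i$. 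This is probably the argument you were gesturing at, but the passage through $\kr(A^\dagger)=\ra(A)^\perp$ is the actual mechanism and should be made explicit rather than referred back to Lemma~\ref{Lfr}.
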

\begin{proof}
In the lemma \ref{Lfr}, one can trivially observe that the set $\{u_i\,\mid\,i=1,2,3,\cdots,n\}$ of vectors can be chosen to be an orthonormal basis for $\ra(A)$. Now let $(\phi,\psi)\in\vr\times\ur$, and from equation \ref{fr}, we have 
$$\langle\psi\mid A\phi\rangle_{\ur}=\sum_{i=1}^{n}\langle\psi\mid u_i\rangle_{\ur}\langle v_i\mid\phi\rangle_{\vr}.$$
This implies that,
$\langle A^\dagger\psi\mid \phi\rangle_{\vr}=\langle \,\sum_{i=1}^{n}v_i\overline{\langle\psi\mid u_i\rangle}_{\ur}\mid\phi\rangle_{\vr}$ and so the equation \ref{Adgr} holds. From equation \ref{Adgr}, it immediately follows that  $A^\dagger u_i=v_i$ for all $i=1,2,3,\cdots,n$, as $\langle u_i\mid u_j\rangle_{\ur}=\delta_{ij}$. This guarantees that for each $i=1,2,3,\cdots,n$, $v_i\in\ra(A^\dagger)$, and it suffices, together with (\ref{Adgr}), to say that $\ra(A^\dagger)=\text{right span over}~\, \quat\{v_i\,\mid\,i=1,2,3,\cdots,n\}$. Now assume that $$\exists\,\pu_i\in\quat:~i=1,2,3,\cdots,n \mbox{~~such that~~}\sum_{i=1}^{n}v_i\pu_i=0.$$ 
Then $\sum_{i=1}^{n}(A^\dagger u_i)\pu_i=0$, and $\sum_{i=1}^{n}u_i\pu_i\in\kr(A^\dagger)=\ra(A)^\perp$ by (a) of proposition \ref{IP30}. Thus $\sum_{i=1}^{n}u_i\pu_i=0$ as $u_i\in\ra(A)$, for all $i=1,2,3,\cdots,n$. Since $\{u_i\,\mid\,i=1,2,3,\cdots,n\}$ forms a (orthonormal) basis, we have for each $i=1,2,3,\cdots,n$, $\pu_i=0$. That is, $\{v_i\,\mid\,i=1,2,3,\cdots,n\}$ is a linear independent set in $\ra(A^\dagger)$. This completes the proof.   
\end{proof}
\begin{theorem}\label{PD22} Let $A\in\B(\vr,\ur)$ be a finite rank operator. Then
\begin{itemize}
\item [(a)] $A$ is compact.
\item [(b)] $A^\dagger\in\B(\ur,\vr)$ is a finite rank operator and \em$\dim(\ra(A))=\dim(\ra(A^\dagger))$.
\end{itemize}
\end{theorem}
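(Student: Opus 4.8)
The plan is to obtain both parts as direct consequences of the structure theorems for finite rank operators already proved, namely Lemma \ref{Lfr} and Lemma \ref{Pfr}, together with the finite-dimensional compactness criterion of Proposition \ref{Com}(b) and the fact (Proposition \ref{ort}(c)) that finite-dimensional subspaces are closed.

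For (a), I would take an arbitrary bounded sequence $\{\phi_n\}$ in $\vr$. Since $A$ is bounded, $\{A\phi_n\}$ is a bounded sequence lying entirely in the finite-dimensional closed subspace $\ra(A)\subseteq\ur$; by Proposition \ref{Com}(b) every closed bounded subset of the finite-dimensional normed space $\ra(A)$ is compact, so $\{A\phi_n\}$ has a subsequence converging in $\ur$. By Definition \ref{PD1} this is exactly compactness of $A$, so $A\in\B_0(\vr,\ur)$.

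For (b), I would write $A$ in the form supplied by Lemma \ref{Lfr}, $A\phi=\sum_{i=1}^{n}u_i\langle v_i\mid\phi\rangle_{\vr}$ with $\{u_i\}$ an orthonormal basis of $\ra(A)$ and $n=\dim(\ra(A))$, and then invoke Lemma \ref{Pfr} for the explicit adjoint $A^\dagger\psi=\sum_{i=1}^{n}v_i\langle u_i\mid\psi\rangle_{\ur}$, valid for all $\psi\in\ur$. Boundedness of $A^\dagger$ is then immediate from the triangle inequality and the Cauchy--Schwarz estimate $\|A^\dagger\psi\|_{\vr}\le\big(\sum_{i=1}^{n}\|v_i\|_{\vr}\big)\|\psi\|_{\ur}$, so $A^\dagger\in\B(\ur,\vr)$; its range is contained in the right $\quat$-span of $\{v_1,\dots,v_n\}$, hence is finite dimensional, so $A^\dagger$ is finite rank. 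Finally, Lemma \ref{Pfr} already records that $\{v_1,\dots,v_n\}$ is linearly independent and spans $\ra(A^\dagger)$, i.e.\ is a basis, whence $\dim(\ra(A^\dagger))=n=\dim(\ra(A))$.

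I do not anticipate any real obstacle: once Lemmas \ref{Lfr} and \ref{Pfr} are in hand the theorem is essentially bookkeeping, and part (a) is nothing more than local compactness of finite-dimensional normed spaces. The only subtlety worth flagging is that ``dimension'' throughout means dimension as a right quaternionic vector space, so the equality of ranks genuinely uses right $\quat$-linear independence of the $v_i$ --- but that is precisely the content of the closing assertion of Lemma \ref{Pfr}, so no additional argument is needed.
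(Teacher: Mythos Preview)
Your proposal is correct and follows essentially the same approach as the paper: part (b) is deduced directly from Lemmas \ref{Lfr} and \ref{Pfr} in both, and part (a) in both cases reduces to the local compactness of the finite-dimensional space $\ra(A)$ via Proposition \ref{Com}(b). The only cosmetic difference is that you use the sequential characterization of compactness from Definition \ref{PD1}, whereas the paper verifies directly that $\overline{A(U)}$ is compact for the closed unit ball $U$; these are equivalent formulations already recorded in that definition.
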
 
\begin{proof}
Since the operator $A:\vr\longrightarrow\ur$ is a finite rank, we have $\dim(A(U))<\dim(\ra(A))<\infty$, where $U=\{\phi\in\vr~|~\|\phi\|_{\vr}\leq1\}$. Hence by proposition \ref{ort}, $A(U)$ is closed. Now for each $\phi\in U$,
$$\|A\phi\|_{\ur}\leq \|A\|\|\phi\|_{\vr}\leq\|A\|.$$
That is, $A(U)$ is closed and bounded set. Hence by proposition \ref{Com}, $\overline{A(U)}=A(U)$ is compact, and so $A$ is a compact operator. This concludes the point (a). 
From the lemmas \ref{Lfr} and \ref{Pfr}, the point (b) immediately follows.
\end{proof}
\begin{definition}\label{PD3}
Let $M\subset\vr$ be a closed subspace, then $\text{codim}(M)=\dim(\vr/M)$.
\end{definition}
\begin{definition}\label{PD4}
Let $A:\vr\longrightarrow\ur$ be a bounded operator, then $\text{coker}(A):=\ur/\text{ran}(A)$ and $\dim(\text{coker}(A))=\dim(\ur)-\dim(\text{ran}(A)).$
\end{definition}
\begin{theorem}\label{comeq}
If $A\in\B(\ur,\vr)$, then the following statements are equivalent:
\begin{itemize}
\item [(a)] $A$ is compact.
\item [(b)] There exist finite rank operators $A_n:\vr\longrightarrow\ur$ such that $\|A-A_n\|\longrightarrow 0$ as $n\longrightarrow \infty$.
\item [(c)] $A^\dagger$ is compact.
\end{itemize}
\end{theorem}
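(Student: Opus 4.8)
The plan is to transcribe the classical Hilbert-space argument; since no quaternionic scalar multiple of an operator is involved, it goes through with no essential change. I would first prove the cycle $(b)\Rightarrow(a)\Rightarrow(b)$, and then obtain $(a)\Leftrightarrow(c)$ from the equivalence $(a)\Leftrightarrow(b)$ together with the behaviour of the adjoint on finite rank operators. The implication $(b)\Rightarrow(a)$ is immediate: each $A_n$ is compact by Theorem \ref{PD22}(a), and a norm limit of compact operators is compact by Proposition \ref{com1}(c).

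For $(a)\Rightarrow(b)$ I would fix $\varepsilon>0$, let $U$ denote the closed unit ball of $\ur$, and use that $\overline{A(U)}$ is compact, hence totally bounded by Proposition \ref{Com}(a), to choose $\phi_1,\dots,\phi_n\in U$ with $A(U)\subseteq\bigcup_{i=1}^{n}B(A\phi_i;\varepsilon)$. Let $M\subseteq\vr$ be the right span over $\quat$ of $\{A\phi_1,\dots,A\phi_n\}$; it is a closed subspace by Proposition \ref{ort}(c), so $\vr=M\oplus M^{\perp}$ by Proposition \ref{ort}(b), and the orthogonal projection $P:\vr\longrightarrow\vr$ onto $M$ is right linear with $\|P\|\leq 1$ and $\|\phi-P\phi\|_{\vr}=\operatorname{dist}(\phi,M)$ for all $\phi$ (Pythagoras). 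Setting $A_{\varepsilon}:=PA$ gives a finite rank operator, since $\ra(A_{\varepsilon})\subseteq M$. For $\phi\in U$, choosing $i$ with $\|A\phi-A\phi_i\|_{\vr}<\varepsilon$ and using $A\phi_i\in M$ yields $\|A\phi-A_{\varepsilon}\phi\|_{\vr}=\operatorname{dist}(A\phi,M)\leq\|A\phi-A\phi_i\|_{\vr}<\varepsilon$, so $\|A-A_{\varepsilon}\|\leq\varepsilon$; taking $\varepsilon=1/m$ produces the required sequence.

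For $(a)\Leftrightarrow(c)$: if $A$ is compact, pick finite rank $A_m$ with $\|A-A_m\|\to 0$; then each $A_m^{\dagger}$ is finite rank by Theorem \ref{PD22}(b), and $\|A^{\dagger}-A_m^{\dagger}\|=\|(A-A_m)^{\dagger}\|=\|A-A_m\|\to 0$, where the isometry $\|B^{\dagger}\|=\|B\|$ is read off from the adjoint relation \eqref{Ad1} together with the Cauchy--Schwarz inequality. Hence $A^{\dagger}$ satisfies $(b)$ and is compact by $(b)\Rightarrow(a)$; the converse follows by applying the same reasoning to $A^{\dagger}$ and invoking $A^{\dagger\dagger}=A$.

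The only step where quaternionic care is genuinely required is $(a)\Rightarrow(b)$: I must verify that the orthogonal projection onto the finite-dimensional subspace $M$ is a right-linear contraction realizing the distance to $M$ in the quaternionic inner-product space -- precisely the content drawn from Proposition \ref{ort} -- after which the approximation estimate is routine. I do not expect any serious obstacle beyond this bookkeeping; the remaining content is a verbatim adaptation of the complex proof.
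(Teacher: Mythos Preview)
Your proof is correct. The logical organisation and the treatment of $(b)\Rightarrow(a)$ and $(a)\Leftrightarrow(c)$ coincide with the paper's. The genuine difference is in $(a)\Rightarrow(b)$: the paper observes that $\overline{A(U)}$ is compact, hence separable, fixes a countable orthonormal basis $\{e_i\}$ for the closed span of the range, and takes $A_n\phi=\sum_{i=1}^{n}e_i\langle e_i\mid A\phi\rangle$; you instead extract a finite $\varepsilon$-net from total boundedness and project onto its right linear span. Your route has the advantage that the uniform bound $\|A-A_\varepsilon\|\le\varepsilon$ is immediate, whereas the paper's truncation yields $\|A\phi-A_n\phi\|\to 0$ only pointwise on $U$ and then passes to the supremum, a step that strictly speaking needs an additional compactness/uniformity argument of exactly the kind you have built in. The paper's construction, on the other hand, gives a single canonical sequence $(A_n)$ rather than one approximant per $\varepsilon$.
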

\begin{proof}
Suppose that point (a) holds. Then $\overline{A(U)}$ is a compact set, where $U=\{\phi\in\vr~|~\|\phi\|_{\vr}\leq1\}$. So $\overline{A(U)}$ is separable, and there exists an orthonormal basis $\{e_i~|~i=1,2,3,\cdots\}$ for $\overline{A(U)}$. Now, for any $n\in\mathbb{N}$, define $A_n:\vr\longrightarrow\ur$ by
$$A_n\phi=\sum_{i=1}^{n}e_i\langle A^\dagger e_i\mid \phi\rangle_{\vr},\mbox{~~for all $\phi\in\vr$.}$$
Then by lemma (\ref{Lfr}), $A_n$ is a finite rank operator, for all $n\in\mathbb{N}$. Further, for every $\phi\in U$, we have
$$\|A\phi-A_n\phi\|^2_{\ur}=\|\sum_{i>n}e_i\langle A^\dagger e_i\mid \phi\rangle_{\vr}\|^2_{\ur}=\|\sum_{i>n}e_i\langle  e_i\mid A\phi\rangle_{\ur}\|^2_{\ur}=\sum_{i>n}\mid\langle  e_i\mid A\phi\rangle_{\ur}\mid^2.$$ 
Thus $\|A\phi-A_n\phi\|_{\ur}\longrightarrow 0$ as $n\longrightarrow\infty$, for all $\phi\in U$, since for each $\phi\in U$, 
$$\sum_{i>n}\mid\langle  e_i\mid A\phi\rangle_{\ur}\mid^2=\left |\sum_{i=1}^{\infty}\mid\langle  e_i\mid A\phi\rangle_{\ur}\mid^2-\sum_{i=1}^{n}\mid\langle  e_i\mid A\phi\rangle_{\ur}\mid^2\right |\longrightarrow0$$ as $n\longrightarrow\infty$. Therefore,
$$\|A-A_n\|=\sup_{\|\phi \|_{\vr}=1}\|A\phi-A_n\phi \|_{\ur}\longrightarrow 0$$ as $n\longrightarrow\infty$. Hence point (b) holds.\\
Suppose that point (b) holds. Then $\|A^\dagger-A_n^\dagger\|=\|A-A_n\|\longrightarrow 0$ as $n\longrightarrow\infty$. Thus point (c) follows from theorem (\ref{PD22}) and (c) of proposition (\ref{com1}).\\
The implication (c) $\Rightarrow$ (a) can be obtained, by applying the implications (a) $\Rightarrow$ (b) and (b) $\Rightarrow$ (c) for $A^\dagger$ and using the fact that $A^{\dagger\dagger}=A$. Hence the theorem follows.
\end{proof}
\begin{proposition}\label{PP22}
If $A\in\B(\vr)$ is compact, then $\dim(\kr(\qu\Iop-A))<\infty$, for all $\qu\in\quat\smallsetminus\{0\}$.
\end{proposition}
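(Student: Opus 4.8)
The plan is to reduce the statement to the fact that a quaternionic normed space whose closed unit ball is compact is finite dimensional, exactly as one argues in the complex case. Fix $\qu\in\quat\smallsetminus\{0\}$ and put $M=\kr(\qu\Iop-A)$. Since $\qu\Iop$ is right linear with $\|\qu\Iop\|=|\qu|$ by Proposition \ref{lft_mul}(a),(b), we have $\qu\Iop-A\in\B(\vr)$, and Proposition \ref{IP30}(c) shows $M$ is a closed subspace of $\vr$; in particular $(M,\|\cdot\|_{\vr})$ is complete. By the very definition of $M$, $A\phi=\qu\phi$ for every $\phi\in M$, and this is where the hypothesis $\qu\neq 0$ enters: $\qu^{-1}$ exists (for $\qu=0$ the statement is false, since $\kr(A)$ may be infinite dimensional).

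The key step is to turn $A$ restricted to $M$ into the identity by a left scalar multiplication. Consider the operator $B:=\qu^{-1}A:\vr\longrightarrow\vr$ defined as in equation \ref{lft_mul-op}; it is compact by Proposition \ref{com1}(a). For $\phi\in M$, Proposition \ref{lft_mul}(c) (together with $1\phi=\phi$) gives $B\phi=\qu^{-1}(\qu\phi)=(\qu^{-1}\qu)\phi=\phi$. Hence $M$ is invariant under $B$ and $B|_{M}=\Iop|_{M}$. A routine sequential argument using Definition \ref{PD1} and the closedness of $M$ shows that the restriction of a compact operator to a closed invariant subspace is again compact; applying it to $B$ we conclude that $\Iop|_{M}:M\longrightarrow M$ is compact.

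It follows that the closed unit ball $U_M=\{\phi\in M\mid\|\phi\|_{\vr}\le1\}$ is compact: $U_M$ is bounded, hence $\overline{\Iop|_{M}(U_M)}=\overline{U_M}$ is compact by compactness of $\Iop|_{M}$, while $U_M=M\cap\{\phi\in\vr\mid\|\phi\|_{\vr}\le1\}$ is closed in $\vr$, so $\overline{U_M}=U_M$. Finally, a quaternionic normed space whose closed unit ball is compact is finite dimensional; this is the Riesz-lemma argument (one produces a sequence of unit vectors with pairwise distances bounded below by a fixed positive constant, which has no convergent subsequence, contradicting compactness of the unit ball), and it carries over verbatim from the complex setting, cf. the references in Proposition \ref{Com}. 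Therefore $\dim(\kr(\qu\Iop-A))=\dim(M)<\infty$.

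The only genuinely nontrivial input is this last implication; everything else is bookkeeping. The point deserving care is that $M$ need not be invariant under left multiplication by quaternions — since $A$ is merely right linear, $A(\qu\phi)\neq\qu(A\phi)$ in general — so one must work with the right linear operator $\qu^{-1}A$ rather than trying to restrict the left multiplication by $\qu^{-1}$ directly; the identity $(\qu^{-1}A)\phi=\phi$ valid on $M$ renders this harmless.
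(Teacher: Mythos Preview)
Your proof is correct and rests on the same Riesz-lemma core as the paper's, but the paper takes a shorter, more direct route that avoids left multiplication entirely. The paper argues by contradiction: assuming $\dim M=\infty$, it picks unit vectors $\phi_n\in M$ with $\|\phi_n-\phi_m\|>\tfrac12$ for $n\neq m$ (this is precisely the Riesz-lemma step), and then from $A\phi_n=\qu\phi_n$ deduces $\|A\phi_n-A\phi_m\|=|\qu|\,\|\phi_n-\phi_m\|>|\qu|/2$, so $\{A\phi_n\}$ has no convergent subsequence, contradicting compactness of $A$. Your detour through $B=\qu^{-1}A$ is harmless but unnecessary: your closing remark that ``one must work with the right linear operator $\qu^{-1}A$'' overstates the case, since the paper's argument shows that working directly with $A$ and the real scalar $|\qu|$ suffices, with no appeal to the left-multiplication structure on $\vr$. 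Your packaging (``$\Iop|_M$ compact $\Rightarrow\dim M<\infty$'') is conceptually clean; the paper's version is simply one step shorter.
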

\begin{proof}
Let $\qu\in\quat\smallsetminus\{0\}$. Assume that $\dim(\kr(\qu\Iop-A))=\infty$. Then there exists a sequence $\{\phi_n\}\subseteq\kr(\qu\Iop-A)$ such that \begin{center}
$\|\phi_n\|_{\vr}=1$ and $\|\phi_n-\phi_m\|_{\vr}>\dfrac{1}{2}$, for any $m,n\in\mathbb{N}$ with $m\ne n$,
\end{center}
as $\kr(\qu\Iop-A)$ is separable. Now for each $n\in\mathbb{N}$, $A\phi_n=\qu\phi_n$ implies 
\begin{center}
$\|A\phi_n\|_{\vr}=\mid\qu\mid$ and $\|A\phi_n-A\phi_m\|_{\vr}>\dfrac{\mid\qu\mid}{2}$, for any $m,n\in\mathbb{N}$ with $m\ne n$.
\end{center}
This suffices to say that $A$ is not compact. This concludes the result. 
\end{proof}

\section{Approximate $S$-point spectrum}
Following the complex case, we define the approximate spherical point spectrum of $A\in\B(V_\quat^R)$ as follows.
\begin{definition}\label{D1}
Let $A\in\B(V_\quat^R)$. The {\em approximate S-point spectrum} of $A$, denoted by $\apo(A)$, is defined as
$$\apo(A)=\{\qu\in\quat~~|~~\text{there is a sequence}~~\{\phi_n\}_{n=1}^{\infty}~~\text{such that}~~\|\phi_n\|=1~~\text{and}~~\|R_\qu(A)\phi_n\|\longrightarrow 0\}.$$
\end{definition}
\begin{proposition}\label{P3}
Let $A\in\B(\vr)$, then $\sigma_{pS}(A)\subseteq\apo(A)$. 
\end{proposition}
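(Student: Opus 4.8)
The plan is to observe that membership in $\sigma_{pS}(A)$ already furnishes, by normalization, a genuine unit vector in $\ker(R_\qu(A))$, and then to feed the constant sequence built from that vector into the definition of $\apo(A)$.

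First I would fix $\qu\in\sigma_{pS}(A)$. By the definition of the spherical point spectrum this means $\ker(R_\qu(A))\ne\{0\}$, so I may choose $\phi\in\ker(R_\qu(A))$ with $\phi\ne 0$, i.e. $R_\qu(A)\phi=0$. Set $\psi=\phi\,\|\phi\|_{\vr}^{-1}$. Since $\|\phi\|_{\vr}^{-1}$ is a positive real number, hence an element of $\quat$, and since $R_\qu(A)=A^2-2\operatorname{Re}(\qu)A+|\qu|^2\Iop$ is a right $\quat$-linear operator on $\vr$, we get
$$R_\qu(A)\psi=\bigl(R_\qu(A)\phi\bigr)\|\phi\|_{\vr}^{-1}=0,\qquad \|\psi\|_{\vr}=1.$$
Thus $\psi$ is a unit vector annihilated by $R_\qu(A)$.

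Next I would take the constant sequence $\{\phi_n\}_{n=1}^\infty$ defined by $\phi_n=\psi$ for every $n\in\N$. Then $\|\phi_n\|_{\vr}=1$ for all $n$, and $\|R_\qu(A)\phi_n\|_{\vr}=\|R_\qu(A)\psi\|_{\vr}=0$ for all $n$, so in particular $\|R_\qu(A)\phi_n\|_{\vr}\longrightarrow 0$ as $n\to\infty$. By Definition \ref{D1} this is exactly the assertion that $\qu\in\apo(A)$. Since $\qu$ was an arbitrary element of $\sigma_{pS}(A)$, the inclusion $\sigma_{pS}(A)\subseteq\apo(A)$ follows.

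I do not expect any real obstacle here: the statement is a direct consequence of the two definitions, and the only point requiring (minimal) care is that normalizing a kernel vector keeps it in the kernel, which is immediate from right $\quat$-linearity of $R_\qu(A)$ together with the fact that the normalizing scalar is real. (If one prefers to avoid the constant sequence, one could equally well invoke Theorem \ref{NT1}: $\qu\in\sigma_{pS}(A)$ forces $R_\qu(A)$ to fail injectivity, hence to fail being bounded below, which again yields a unit sequence with $\|R_\qu(A)\phi_n\|\to 0$; but the constant-sequence argument is shorter.)
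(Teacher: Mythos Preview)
Your proof is correct and follows essentially the same approach as the paper: pick a nonzero vector in $\ker(R_\qu(A))$, normalize it, and use the resulting constant sequence to witness $\qu\in\apo(A)$. The paper's argument is identical in substance, only terser; your added remarks on right linearity and the alternative via Theorem~\ref{NT1} are fine but not needed.
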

\begin{proof}
Let $\qu\in\sigma_{pS}(A)$, then $\text{ker}(R_\qu(A))\not=\{0\}$. That is $\{\phi\in\D(A^2)~|~R_\qu(A)\phi=0\}\not=\{0\}.$ Therefore, there exists $\phi\in\D(A^2)$ and $\phi\not=0$ such that $R_\qu(A)\phi=0$. Set $\phi_n=\phi/\|\phi\|$ for all $n=1,2,\cdots$. Then $\|\phi_n\|=1$ for all $n$ and $\|R_\qu(A)\phi_n\|\longrightarrow 0$ as $n\longrightarrow\infty$. Thus $\qu\in\apo(A)$.
\end{proof}
\begin{proposition}\label{P4}
If $A\in\B(\vr)$ and $\qu\in\quat$, then the following statements are equivalent.
\begin{enumerate}
\item[(a)] $\qu\not\in\apo(A).$
\item[(b)] $\text{ker}(R_\qu(A))=\{0\}$ and $\text{ran}(R_\qu(A))$ is closed.
\item[(c)] There exists a constant $c\in\R$, $c>0$ such that $\|R_\qu(A)\phi\|\geq c\|\phi\|$ for all $\phi\in\D(A^2)$.
\end{enumerate}
\end{proposition}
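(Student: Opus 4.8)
The plan is to reduce the whole statement to the bounded inverse theorem (Theorem \ref{NT1}) by establishing the two equivalences $(a)\Leftrightarrow(c)$ and $(c)\Leftrightarrow(b)$. First I would record that, since $A\in\B(\vr)$, the pseudo-resolvent $S:=R_\qu(A)=A^2-2\,\text{Re}(\qu)A+|\qu|^2\Iop$ is a polynomial in $A$ with real coefficients; hence $S\in\B(\vr)$ and $\D(A^2)=\vr$. Thus the proposition is entirely a statement about the single bounded right linear operator $S$, and no quaternionic multiple of $S$ ever occurs.

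For $(a)\Leftrightarrow(c)$ I would argue by contraposition. Using the norm identity $\|\phi r\|=\|\phi\|\,|r|$ (which follows from properties (iv)--(v) of the inner product) together with the right linearity of $S$, condition $(c)$ is equivalent to the statement that $\inf\{\,\|S\phi\|_{\vr}:\|\phi\|_{\vr}=1\,\}>0$. If $\qu\in\apo(A)$, the unit vectors $\{\phi_n\}$ supplied by Definition \ref{D1} satisfy $\|S\phi_n\|\to 0$, so that infimum is $0$ and $(c)$ fails; conversely, if the infimum is $0$ one may choose unit vectors $\phi_n$ with $\|S\phi_n\|<1/n$, which exhibits $\qu\in\apo(A)$, so $\neg(c)\Rightarrow\neg(a)$.

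For $(c)\Leftrightarrow(b)$ I would simply apply Theorem \ref{NT1} to $S\in\B(\vr)$: condition $(c)$ is exactly the assertion that $S$ is bounded below, which by the bounded inverse theorem is equivalent to $S$ being injective with closed range, i.e.\ $\kr(S)=\{0\}$ and $\ra(S)$ closed, which is condition $(b)$.

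I do not expect a genuine obstacle here. Because $S$ has only real coefficients, neither the non-commutativity of $\quat$ nor the fixed left multiplication plays any role, and all of the substantive content has already been absorbed into Theorem \ref{NT1}; the only point deserving a line of care is the homogeneity reduction in $(a)\Leftrightarrow(c)$, transferring the lower bound from unit vectors to all of $\vr$, which is routine.
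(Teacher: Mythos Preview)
Your proposal is correct. The paper argues in the cycle $(a)\Rightarrow(c)\Rightarrow(b)\Rightarrow(a)$: the first step is exactly your contrapositive; for $(c)\Rightarrow(b)$ the paper shows injectivity from the lower bound and then runs the standard Cauchy-sequence argument to get closedness of the range; for $(b)\Rightarrow(a)$ it invokes the open mapping theorem (Theorem~\ref{open}) to obtain a bounded inverse $B$ on $\ra(R_\qu(A))$ and reads off the lower bound from $\|\phi\|=\|BR_\qu(A)\phi\|\le\|B\|\,\|R_\qu(A)\phi\|$. Your route has the same content but is packaged more economically: you note that $(b)\Leftrightarrow(c)$ is precisely the bounded inverse theorem (Theorem~\ref{NT1}) applied to $S=R_\qu(A)\in\B(\vr)$, so there is nothing to reprove. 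The paper is effectively rederiving Theorem~\ref{NT1} in situ, whereas you cite it; both arguments are ultimately driven by the open mapping theorem.
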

\begin{proof}
(a)$\Rightarrow$ (c): Suppose (c) fails to hold. Then for every $n$, there exists non-zero vectors $\phi_n$ such that
$$\|R_\qu(A)\phi_n\|\leq\frac{\|\phi_n\|}{n}.$$
For each $n$, let $\displaystyle\psi_n=\frac{\phi_n}{\|\phi_n\|}$, then $\|\psi_n\|=1$ and 
$\displaystyle\|R_\qu(A)\psi_n\|<\frac{1}{n}\longrightarrow 0$ as $n\longrightarrow \infty.$ Therefore $\q\in\apo(A).$\\
(c)$\Rightarrow$ (b): Suppose that there is a constant $c>0$ such that $\|R_\qu(A)\phi\|\geq c\|\phi\|$ for all $\phi\in\D(A^2)$. Therefore, $R_\qu(A)\phi\not=0$ for all $\phi\not=0$ and $R_\qu(A)0=0$. Therefore $\text{ker}(R_\qu(A))=\{0\}$.
Let $\phi\in\overline{\text{ran}(R_\qu(A))}$, where the bar stands for the closure. Then there exists a sequence $\{\phi_n\}\subseteq\text{ran}(R_\qu(A))$ such that $\phi_n\longrightarrow\phi$ as $n\longrightarrow\infty$. From (c) we have a constant $c>0$ such that $\displaystyle \|\phi_n-\phi_m\|\leq\frac{1}{c}\|R_\qu(A)\phi_n-R_\qu(A)\phi_m\|.$ Therefore $\{\phi_n\}$ is a Cauchy sequence as $\{R_\qu(A)\phi_n\}$ is a Cauchy sequence. Let $\displaystyle\psi=\lim_{n\rightarrow\infty}\phi_n$, then $\displaystyle R_\qu(A)\psi=\lim_{n\rightarrow\infty} R_\qu(A)\phi_n$ as $R_\qu$ is continuous. Therefore $R_\qu(A)\psi=\phi$ and hence $\phi\in\text{ran}(R_\qu(A)),$ from which we get $\overline{\text{ran}(R_\qu(A))}\subseteq \text{ran}(R_\qu(A))$. Hence $\text{ran}(R_\qu(A))$ is closed.\\
(b)$\Rightarrow$(a): Suppose that $\text{ker}(R_\qu(A))=\{0\}$ and $\text{ran}(R_\qu(A))$ is closed. Since $A\in\B(\vr)$, $R_\qu(A)$ is bounded and $R_\qu(A):\D(A^2)\longrightarrow \text{ran}(R_\qu(A))$ is a bijection. Therefore, by theorem \ref{open}, there is a bounded linear operator $B:\text{ran}(R_\qu(A))\longrightarrow \D(A^2)$ such that $BR_\qu(A)\phi=\phi$ for all $\phi\in\D(A^2)$. Thus,
$\|\phi\|=\|BR_\qu(A)\phi\|\leq \|B\|\|R_\qu(A)\phi\|$ for all $\phi\in\D(A^2).$ Therefore, if there is a sequence $\{\phi_n\}\subseteq\D(A^2)$ with $\|\phi_n\|=1$, then
$1\leq  \|B\|\|R_\qu(A)\phi_n\|$. That is, $\displaystyle\frac{1}{\|B\|}\leq\|R_\qu(A)\phi_n\|.$ Hence, there is no sequence $\{\phi_n\}\subseteq\D(A^2)$ with $\|\phi_n\|=1$ such that $\|R_\qu(A)\phi_n\|\longrightarrow 0$ as $n\longrightarrow\infty.$ Therefore $\qu\not\in\apo(A)$.
\end{proof}
\begin{theorem}\label{T1}Let $A\in\B(\vr)$, then $\apo(A)$ is a non-empty closed subset of $\quat$ and $\partial\sigma_S(A)\subseteq\apo(A),$ where $\partial\sigma_S(A)$ is the boundary of $\sigma_S(A)$.
\end{theorem}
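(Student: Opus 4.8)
The plan is to establish the three assertions in turn: closedness of $\apo(A)$, the inclusion $\partial\sigma_S(A)\subseteq\apo(A)$, and finally non-emptiness, which will drop out of the previous two together with Remark \ref{R1}. The one ingredient used repeatedly is that $\qu\longmapsto R_\qu(A)$ is norm-continuous on $\quat$: since $R_\qu(A)-R_{\qu_0}(A)=-2(\text{Re}(\qu)-\text{Re}(\qu_0))A+(|\qu|^2-|\qu_0|^2)\Iop$, one has $\|R_\qu(A)-R_{\qu_0}(A)\|\leq 2|\qu-\qu_0|\,\|A\|+\big||\qu|^2-|\qu_0|^2\big|\longrightarrow 0$ as $\qu\to\qu_0$ (recall $|\text{Re}(\qu)-\text{Re}(\qu_0)|\leq|\qu-\qu_0|$). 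Note also that, $A$ being bounded, $\D(A^2)=\vr$.

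First I would show $\quat\smallsetminus\apo(A)$ is open. Fix $\qu_0\notin\apo(A)$; by Proposition \ref{P4} there is $c>0$ with $\|R_{\qu_0}(A)\phi\|\geq c\|\phi\|$ for all $\phi\in\vr$. Choosing $\delta>0$ so that $\|R_\qu(A)-R_{\qu_0}(A)\|<c/2$ whenever $|\qu-\qu_0|<\delta$, the reverse triangle inequality gives
\[\|R_\qu(A)\phi\|\geq\|R_{\qu_0}(A)\phi\|-\|(R_\qu(A)-R_{\qu_0}(A))\phi\|\geq\tfrac{c}{2}\|\phi\|,\qquad\phi\in\vr,\]
so $\qu\notin\apo(A)$ by Proposition \ref{P4} again. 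Hence $\apo(A)$ is closed.

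Next, let $\qu_0\in\partial\sigma_S(A)$. By Remark \ref{R1}, $\partial\sigma_S(A)=\partial\rho_S(A)$, so there is a sequence $\qu_n\in\rho_S(A)$ with $\qu_n\to\qu_0$, and $R_{\qu_n}(A)\in\mathcal{G}(\vr)$ for each $n$. I claim $\|R_{\qu_n}(A)^{-1}\|\to\infty$. If not, some subsequence satisfies $\|R_{\qu_{n_k}}(A)^{-1}\|\leq M$; picking $k$ with $\|R_{\qu_0}(A)-R_{\qu_{n_k}}(A)\|<1/M$, Proposition \ref{PP3} makes $\Iop+R_{\qu_{n_k}}(A)^{-1}(R_{\qu_0}(A)-R_{\qu_{n_k}}(A))$ invertible, whence
\[R_{\qu_0}(A)=R_{\qu_{n_k}}(A)\big[\Iop+R_{\qu_{n_k}}(A)^{-1}(R_{\qu_0}(A)-R_{\qu_{n_k}}(A))\big]\in\mathcal{G}(\vr),\]
a product of two operators in $\mathcal{G}(\vr)$, contradicting $\qu_0\in\sigma_S(A)$ (the latter being compact, hence closed, so $\partial\sigma_S(A)\subseteq\sigma_S(A)$). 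Now choose unit vectors $\psi_n$ with $\|R_{\qu_n}(A)^{-1}\psi_n\|\geq\tfrac12\|R_{\qu_n}(A)^{-1}\|$ and put $\phi_n=R_{\qu_n}(A)^{-1}\psi_n/\|R_{\qu_n}(A)^{-1}\psi_n\|$; then $\|\phi_n\|=1$ and $\|R_{\qu_n}(A)\phi_n\|=1/\|R_{\qu_n}(A)^{-1}\psi_n\|\leq 2/\|R_{\qu_n}(A)^{-1}\|\to 0$. By norm-continuity, $\|R_{\qu_0}(A)\phi_n\|\leq\|R_{\qu_n}(A)\phi_n\|+\|R_{\qu_0}(A)-R_{\qu_n}(A)\|\to 0$, so $\qu_0\in\apo(A)$ and the inclusion is proved. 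Finally, $\sigma_S(A)$ is a non-empty compact proper subset of $\quat$ (Proposition \ref{PP1}), hence $\partial\sigma_S(A)\neq\emptyset$ by Remark \ref{R1}, and so $\emptyset\neq\partial\sigma_S(A)\subseteq\apo(A)$. The main obstacle is the divergence $\|R_{\qu_n}(A)^{-1}\|\to\infty$, which I would organize entirely around the Neumann-type criterion of Proposition \ref{PP3} together with the elementary fact that a product of operators in $\mathcal{G}(\vr)$ again lies in $\mathcal{G}(\vr)$; since $R_\qu(A)$ has real coefficients, neither the left multiplication nor the non-commutativity of $\quat$ intervenes, and the argument runs parallel to the classical complex proof.
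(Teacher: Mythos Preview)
Your proof is correct and follows the same overall architecture as the paper: norm-continuity of $\qu\mapsto R_\qu(A)$, closedness via the bounded-below characterization of Proposition~\ref{P4}, the boundary inclusion by approximating $\qu_0\in\partial\sigma_S(A)$ from $\rho_S(A)$, showing the inverses $R_{\qu_n}(A)^{-1}$ blow up, and then manufacturing approximate eigenvectors; non-emptiness then comes from Remark~\ref{R1}.

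The one genuine difference is in how the divergence $\|R_{\qu_n}(A)^{-1}\|\to\infty$ is obtained. The paper argues by a two-case analysis (depending on whether $\qu\in\apo(A)$ or not) and direct estimates on $\|\phi\|=\|R_{\qu_n}(A)^{-1}R_{\qu_n}(A)\phi\|$; you instead invoke the Neumann-series criterion of Proposition~\ref{PP3} to show directly that a uniform bound on the inverses would force $R_{\qu_0}(A)\in\mathcal{G}(\vr)$. Your route is shorter and avoids the case split entirely, at the cost of quoting one extra proposition; both arguments exploit only the real-coefficient structure of $R_\qu(A)$, so nothing quaternionic is lost either way.
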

\begin{proof}
Let $\qu\in\partial\sigma_S(A).$ Since $\rho_S(A)\not=\emptyset$ and $\sigma_S(A)$ is closed,
$$\partial\sigma_S(A)=\partial\rho_S(A)=\overline{\rho_S(A)}\cap\sigma_S(A).$$
Therefore, there exists a sequence $\{\qu_n\}\subseteq\rho_S(A)$ such that $\qu_n\longrightarrow\qu$ as $n\longrightarrow \infty$. Since,
$$R_{\qu_n}(A)-R_\qu(A)=(\|\qu_n\|-\|\qu\|)\Iop-2(\text{Re}(\qu_n)-\text{Re}(\qu))A,\quad\text{for all}~~n,$$
we have $R_{\qu_n}(A)\longrightarrow R_\qu(A)$ in $\B(\vr)$ as $\|\qu_n\|\longrightarrow\|\qu\|$ and $\text{Re}(\qu_n)\longrightarrow\text{Re}(\qu)$.\\
{\bf Claim:} $\displaystyle\sup_n\|R_{\qu_n}(A)^{-1}\|=\infty.$\\
Since $\sigma_S(A)$ is closed, $\q_n\in\rho_S(A)$, $\qu\in\partial\sigma_S(A)$ and $\qu\not\in\rho_S(A)$ we have $R_{\qu_n}(A)\longrightarrow R_{\qu}(A)$ in $\B(\vr)$ and $R_\qu(A)\in\B(\vr)\setminus\mathcal{G}(\vr).$\\
{\em Case-I:} $\qu\in\apo(A)$. Assume $\displaystyle\sup_n\|R_{\qu_n}(A)^{-1}\|<\infty.$ Since $\qu\in\apo(A)$, $\text{ker}(R_\qu(A))\not=\{0\}.$ Therefore. there exits $\phi\in\D(A^2)$ with $\phi\not=0$ such that $R_\qu(A)\phi=0$. Thus,
\begin{eqnarray*}
0\not=\|\phi\|&=& \|R_{\qu_n}(A)^{-1}R_{\qu_n}(A)\phi\|\\
&\leq&\sup_n\|R_{\qu_n}(A)^{-1}\|\limsup_n\|R_{\qu_n}(A)\phi\|\\
&\leq& \sup_n\|R_{\qu_n}(A)^{-1}\| \|R_{\qu}(A)\phi\|=0,
\end{eqnarray*}
which is a contradiction. Therefore  $\displaystyle\sup_n\|R_{\qu_n}(A)^{-1}\|=\infty.$\\
{\em Case-II:} $\qu\not\in\apo(A)$. Then $\text{ker}(R_\qu(A))=\{0\}$. Thus $R_\qu(A)^{-1}:\text{ran}(R_\qu(A))\longrightarrow\vr$ exists. For each $n$, we have
$$R_{\qu_n}(A)^{-1}-R_\qu(A)^{-1}=R_{\qu_n}(A)^{-1}[R_\qu(A)-R_{\qu_n}(A)]R_\qu(A)^{-1}.$$
Assume $\displaystyle\sup_n\|R_{\qu_n}(A)^{-1}\|<\infty.$ Then, since $R_{\qu_n}(A)\longrightarrow R_\qu(A)$ in $\B(\vr)$, $R_{\qu_n}(A)^{-1}\longrightarrow R_\qu(A)^{-1}$ and $R_{\qu}(A)^{-1}\in\B(\vr)$, that is, $R_\qu(A)\in\mathcal{G}(\vr)$, which is a contradiction to $\qu\in\sigma_S(A).$ Therefore, $\displaystyle\sup_n\|R_{\qu_n}(A)^{-1}\|=\infty.$ The claim is proved.\\
Since $\displaystyle\|R_{\qu_n}(A)^{-1}\|=\sup_{\|\psi\|=1}\|R_{\qu_n}(A)^{-1}\psi\|$, we can take vectors $\phi_n\in\vr$ with $\|\phi_n\|=1$, such that, for each $n$,
$$\displaystyle\|R_{\qu_n}(A)^{-1}\|-\frac{1}{n}\leq \displaystyle\|R_{\qu_n}(A)^{-1}\phi_n\|\leq \displaystyle\|R_{\qu_n}(A)^{-1}\|.$$
From which we have $\displaystyle\sup_n\|R_{\qu_n}(A)^{-1}\phi_n\|=\infty$, and hence
$\displaystyle\inf_n\|R_{\qu_n}(A)^{-1}\phi_n\|^{-1}=0$.
Therefore, there are subsequences of $\{\phi_n\}$ and $\{\qu_n\}$, call them $\{\phi_k\}$ and $\{\qu_k\}$ such that
\begin{equation}\label{TE1}
\|R_{\qu_k}(A)^{-1}\phi_k\|^{-1}\longrightarrow 0,\quad\text{as}~~k\longrightarrow \infty.
\end{equation}
Set $\displaystyle \psi_k=\frac{R_{\qu_k}(A)^{-1}\phi_k}{\|R_{\qu_k}(A)^{-1}\phi_k\|}$, then $\{\psi_k\}$ is a sequence of unit vectors in $\vr$ with
$$\|R_{\qu_k}(A)\psi_k\|=\frac{\|R_{\qu_k}(A)R_{\qu_k}(A)^{-1}\phi_k\|}{\|R_{\qu_k}(A)^{-1}\phi_k\|}=\|R_{\qu_k}(A)^{-1}\phi_k\|^{-1}.$$
Now, from (\ref{TE1}) and as $\qu_k\longrightarrow\qu$, we have
\begin{eqnarray*}
\|R_\qu(A)\psi_k\|&=&\|(A^2-2\text{Re}(\qu)A+|\qu|^2\Iop)\psi_k\|\\
&=&\|(A^2-2\text{Re}(\qu_k)A+|\qu_k|^2\Iop+2\text{Re}(\qu_k)A-|\qu_k|^2\Iop-2\text{Re}(\qu)A+|\qu|^2\Iop)\psi_k\|\\
&=&\|(R_{\qu_k}(A)-(|\qu_k|^2-|\qu|^2)\Iop+2(\text{Re}(\qu_k)-\text{Re}(\qu))A)\psi_k\|\\
&\leq& \|(R_{\qu_k}(A)\psi_k\|+||\qu_k|^2-|\qu|^2|+2|\text{Re}(\qu_k)-\text{Re}(\qu)|~\|A\|\\
&=&
\|R_{\qu_k}(A)^{-1}\phi_k\|^{-1}+||\qu_k|^2-|\qu|^2|+2|\text{Re}(\qu_k)-\text{Re}(\qu)|~\|A\|
\longrightarrow 0\quad\text{as}~~~k\longrightarrow\infty.
\end{eqnarray*}
That is, $\|R_\qu(A)\psi_k\|\longrightarrow 0$ as $k\longrightarrow\infty.$ Hence $\qu\in\apo(A)$ and therefore $\partial\sigma_S(A)\subseteq\apo(A)$. Further, from remark (\ref{R1}) $\partial\sigma_S(A)\not=\emptyset$. Thus $\apo(A)\not=\emptyset$.\\
Finally, take an arbitrary $\qu\in\quat\setminus\apo(A)$ such that $R_\qu(A)$ is bounded below. Then, for all $\phi\in\D(A^2)$ and $\pu\in\quat$, there exists $\alpha>0$ $(\alpha\in\R)$ such that
\begin{eqnarray*}
\alpha\|\phi\|&\leq& \|R_\qu(A)\phi\|\\
&\leq& \|R_\pu(A)\phi\|+\|(|\pu|^2-|\qu|^2)\phi\|+2\|(\text{Re}(\pu)-\text{Re}(\qu))A\phi\|,
\end{eqnarray*}
which implies
$$(\alpha-|~|\pu|^2-|\qu|^2~|-2|(\text{Re}(\pu)-\text{Re}(\qu))|\|A\|~)\|\phi\|\leq\|R_\pu(A)\phi\|.$$
Hence $R_\pu(A)$ is bounded below for each $\pu\in\quat$ that satisfies $$\alpha>|~|\pu|^2-|\qu|^2~|+2|(\text{Re}(\pu)-\text{Re}(\qu))|\|A\|.$$
From this we can say that $\pu\in\quat\setminus\apo(A)$ for every $\pu$ which is sufficiently close to $\qu$. Hence, the open ball centered at $\qu$ with radius $\alpha$, $B_{\alpha}(\qu)\subseteq\quat\setminus\apo(A)$. Therefore $\quat\setminus\apo(A)$ is open and $\apo(A)$ is closed.
\end{proof}
\begin{theorem}\label{T2}
Let $A\in\B(\vr)$ and $\qu\in\quat$, then the following statements are equivalent.
\begin{enumerate}
\item [(a)]$\qu\not\in\apo(A)$.
\item[(b)]$\qu\not\in\sigma_l^S(A).$
\item[(c)]$\oqu\not\in\sigma_r^S(A^\dagger).$
\item[(d)]$\text{ran}(R_{\oqu}(A^\dagger))=\vr.$
\end{enumerate}
\end{theorem}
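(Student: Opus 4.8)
The plan is to prove the chain of equivalences (a)$\Leftrightarrow$(b)$\Leftrightarrow$(c)$\Leftrightarrow$(d). The single computational ingredient needed at the outset is the adjoint identity
\[
R_\qu(A)^\dagger = R_{\oqu}(A^\dagger).
\]
This holds because $R_\qu(A) = A^2 - 2\,\text{Re}(\qu)A + |\qu|^2\Iop$ has \emph{real} coefficients: taking adjoints term by term, using $(A^2)^\dagger = (A^\dagger)^2$ and additivity of the adjoint together with $\text{Re}(\oqu) = \text{Re}(\qu)$ and $|\oqu| = |\qu|$, we get $R_\qu(A)^\dagger = (A^\dagger)^2 - 2\,\text{Re}(\qu)A^\dagger + |\qu|^2\Iop = R_{\oqu}(A^\dagger)$. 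It is precisely here that the anomalies of left quaternionic scalar multiplication of operators noted in the introduction do not intervene.

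First I would settle (a)$\Leftrightarrow$(b). By Proposition \ref{P4}, the condition $\qu\notin\apo(A)$ is equivalent to $\kr(R_\qu(A)) = \{0\}$ together with $\ra(R_\qu(A))$ being closed, i.e.\ to $R_\qu(A)$ being injective with closed range. By the characterization of the left $S$-spectrum in Proposition \ref{SP1}, the complement $\qu\notin\sigma_l^S(A)$ means exactly that $R_\qu(A)$ has closed range and trivial kernel (equivalently, is left invertible in $\B(\vr)$, equivalently bounded below by Theorem \ref{NT1}). These are the same conditions, so (a)$\Leftrightarrow$(b).

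Next, (b)$\Leftrightarrow$(c): put $S = R_\qu(A)\in\B(\vr)$. Then $\qu\notin\sigma_l^S(A)$ says $S$ is left invertible in $\B(\vr)$; by Proposition \ref{NP2}(b) this is equivalent to $S^\dagger$ being right invertible in $\B(\vr)$, and $S^\dagger = R_{\oqu}(A^\dagger)$ by the identity above, so this is exactly $\oqu\notin\sigma_r^S(A^\dagger)$. Finally, (c)$\Leftrightarrow$(d): by Proposition \ref{NP2}(c), $R_{\oqu}(A^\dagger)$ is right invertible if and only if it is surjective, i.e.\ $\ra(R_{\oqu}(A^\dagger)) = \vr$, which is statement (d). This closes the chain.

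I do not anticipate a real obstacle: the argument merely glues together Propositions \ref{P4}, \ref{SP1}, \ref{NP2} with the elementary adjoint identity. The points that require care are (i) the verification $R_\qu(A)^\dagger = R_{\oqu}(A^\dagger)$, which rests on $R_\qu(A)$ having real coefficients, and (ii) tracking at each step which half of Proposition \ref{NP2} is in use — left-versus-right invertibility when passing to the adjoint, and right invertibility versus surjectivity at the last step.
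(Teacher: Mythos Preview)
Your proof is correct and follows essentially the same route as the paper: both rest on the adjoint identity $R_\qu(A)^\dagger = R_{\oqu}(A^\dagger)$ and the equivalence of ``bounded below / left invertible / adjoint right invertible / adjoint surjective'' for $R_\qu(A)$. The only organizational difference is that the paper argues via the cycle (a)$\Rightarrow$(b), (b)$\Leftrightarrow$(c), (c)$\Rightarrow$(d), (d)$\Rightarrow$(a) with an explicit right-inverse construction in the last step, whereas you obtain each biconditional directly by invoking Propositions \ref{SP1} and \ref{NP2}; your packaging is slightly more economical but the content is the same.
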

\begin{proof}
For $B\in\B(\vr)$, we have: $BR_{\qu}(A)=\Iop$ if and only if $R_{\oqu}(A^\dagger)B^\dagger=\Iop.$ Therefore (b) and (c) are equivalent.\\
(a)$\Rightarrow$ (b): Suppose $\qu\not\in\apo(A)$. Then by proposition (\ref{P4}), $\text{ker}(\R_\qu(A))=\{0\}$. Hence $R_\qu(A)$ is invertible, and therefore $R_\qu(A)$ is left invertible. Thus $\qu\not\in\sigma_l^S(A).$\\
(c)$\Rightarrow$ (d): Since $\oqu\not\in\sigma_r^S(A^\dagger)$, $R_{\oqu}(A^\dagger)$ is right invertible. Therefore, there is an operator $C\in\B(\vr)$ such that $R_{\oqu}(A^\dagger)C=\Iop$. Hence,
$\vr=R_{\oqu}(A^\dagger)C(\vr)\subseteq\text{ran}(R_{\oqu}(A^\dagger))$. Therefore $\vr=\text{ran}(R_{\oqu}(A^\dagger))$.\\
(d)$\Rightarrow$ (a): Suppose $\text{ran}(R_{\oqu}(A^\dagger))=\vr.$
Define $T:\text{ker}(R_{\oqu}(A^\dagger))^{\perp}\longrightarrow\vr$ by $T\phi=R_{\oqu}(A^\dagger)\phi$. Then $\text{ker}(T)=\{0\}$, that is, $T$ is bijective and hence invertible. Let $$C:\vr\longrightarrow\vr\quad\text{ by}\quad C\phi=T^{-1}\phi.$$ Then $C\vr=\text{ker}(R_{\oqu}(A^\dagger))^{\perp}$ and $R_{\oqu}(A^\dagger)C=\Iop$. Hence, $C^{\dagger}R_\qu(A)=\Iop$ and, for $\phi\in\vr$,
$$\|\phi\|=\|C^{\dagger}R_\qu(A)\phi\|\leq \|C^\dagger\|~\|R_\qu(A)\phi\|,$$ from this we get $\displaystyle\inf\{\|R_\qu(A)\phi\|~|~\|\phi\|=1\}\geq \|C^\dagger\|^{-1}.$ Therefore, there is no sequence $\{\phi_n\}\subseteq\vr$ with $\|\phi_n\|=1$ and $\|R_\qu(A)\phi_n\|\longrightarrow 0$. Hence $\qu\not\in\apo(A)$.
\end{proof}
\begin{corollary}\label{C1}
If $A\in\B(\vr)$, then $\partial\sigma_S(A)\subseteq\sigma_l^S(A)\cap\sigma_r^S(A)=\apo(A)\cap\apo(A^\dagger)^*$.
\end{corollary}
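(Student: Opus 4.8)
The plan is to get both the displayed equality and the inclusion essentially for free from Theorems~\ref{T1} and~\ref{T2}, once two elementary facts about the operator $R_\qu$ are recorded. For the equality, note first that the equivalence (a)$\Leftrightarrow$(b) of Theorem~\ref{T2} is exactly the statement $\apo(A)=\sigma_l^S(A)$. Now apply Theorem~\ref{T2} with $A^\dagger$ in place of $A$ (recall $A^\dagger\in\B(\vr)$ and $A^{\dagger\dagger}=A$): its equivalence (a)$\Leftrightarrow$(c) becomes $\qu\notin\apo(A^\dagger)\Leftrightarrow\oqu\notin\sigma_r^S(A)$, i.e.\ $\qu\in\sigma_r^S(A)\Leftrightarrow\oqu\in\apo(A^\dagger)$, which says precisely $\sigma_r^S(A)=\apo(A^\dagger)^{*}$. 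Intersecting the two identities gives $\sigma_l^S(A)\cap\sigma_r^S(A)=\apo(A)\cap\apo(A^\dagger)^{*}$.

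Next come the two remarks on $R_\qu$. Since $R_\qu(T)=T^2-2\text{Re}(\qu)T+|\qu|^2\Iop$ involves $\qu$ only through $\text{Re}(\qu)$ and $|\qu|$, we have $R_\qu(T)=R_{\oqu}(T)$ for every right linear operator $T$; in particular $\qu\in\rho_S(A)\Leftrightarrow\oqu\in\rho_S(A)$, so $\sigma_S(A)$ is invariant under the conjugation map $\qu\mapsto\oqu$, and since that map is a homeomorphism of $\quat$ its boundary is too: $\partial\sigma_S(A)=\partial\sigma_S(A)^{*}$. Second, for $A\in\B(\vr)$ one has $(R_\qu(A))^\dagger=(A^\dagger)^2-2\text{Re}(\qu)A^\dagger+|\qu|^2\Iop=R_\qu(A^\dagger)$, and since invertibility in $\B(\vr)$ is preserved under $\dagger$, this gives $\rho_S(A^\dagger)=\rho_S(A)$, hence $\sigma_S(A^\dagger)=\sigma_S(A)$ and $\partial\sigma_S(A^\dagger)=\partial\sigma_S(A)$.

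For the inclusion, Theorem~\ref{T1} gives $\partial\sigma_S(A)\subseteq\apo(A)=\sigma_l^S(A)$ at once. Applying Theorem~\ref{T1} to $A^\dagger$ and using $\partial\sigma_S(A^\dagger)=\partial\sigma_S(A)$ yields $\partial\sigma_S(A)\subseteq\apo(A^\dagger)$; taking $*$ of both sides and using $\partial\sigma_S(A)^{*}=\partial\sigma_S(A)$ gives $\partial\sigma_S(A)\subseteq\apo(A^\dagger)^{*}=\sigma_r^S(A)$. Combining, $\partial\sigma_S(A)\subseteq\sigma_l^S(A)\cap\sigma_r^S(A)$, which together with the equality established above completes the argument.

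The only point that demands attention — and the (mild) main obstacle — is the bookkeeping with the conjugation $*$: Theorem~\ref{T2} matches $\sigma_r^S(A)$ to $\apo(A^\dagger)$ only \emph{after} conjugating, so one must verify that $\partial\sigma_S(A)$ is itself $*$-invariant before concluding $\partial\sigma_S(A)\subseteq\sigma_r^S(A)$; this is exactly what the identity $R_\qu(T)=R_{\oqu}(T)$ supplies. Everything else is a direct appeal to Theorems~\ref{T1} and~\ref{T2}.
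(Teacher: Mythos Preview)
Your proof is correct. The derivation of the equality $\sigma_l^S(A)\cap\sigma_r^S(A)=\apo(A)\cap\apo(A^\dagger)^*$ is identical to the paper's, and so is the inclusion $\partial\sigma_S(A)\subseteq\sigma_l^S(A)$ via Theorem~\ref{T1}.

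Where you diverge is in the inclusion $\partial\sigma_S(A)\subseteq\sigma_r^S(A)$. The paper argues by contradiction: if $\qu\in\partial\sigma_S(A)$ but $\oqu\notin\apo(A^\dagger)$, then Theorem~\ref{T2}(d) gives $\ra(R_{\oqu}(A^\dagger))=\vr$ and Proposition~\ref{P4} gives $\kr(R_{\oqu}(A^\dagger))=\{0\}$, so $R_{\oqu}(A^\dagger)$ is invertible, hence so is $R_\qu(A)$, contradicting $\qu\in\sigma_S(A)$. You instead apply Theorem~\ref{T1} a second time, to $A^\dagger$, after recording the two symmetry facts $R_\qu(T)=R_{\oqu}(T)$ (yielding $\partial\sigma_S(A)=\partial\sigma_S(A)^*$) and $R_\qu(A)^\dagger=R_\qu(A^\dagger)$ (yielding $\partial\sigma_S(A^\dagger)=\partial\sigma_S(A)$). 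Your route is a bit more symmetric and makes explicit why no conjugation discrepancy arises; the paper's route is shorter but implicitly uses the same identity $R_\qu(A)^\dagger=R_{\oqu}(A^\dagger)$ when passing from invertibility of $R_{\oqu}(A^\dagger)$ to that of $R_\qu(A)$. Neither approach has any real advantage over the other.
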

\begin{proof}
From theorem \ref{T2}, we have $\qu\not\in\apo(A)\Leftrightarrow\qu\not\in\sigma_l^S(A)$, and hence $\apo(A)=\sigma_l^S(A).$ Also $q\not\in\apo(A^\dagger)\Leftrightarrow\oqu\not\in\sigma_r^S(A)\Leftrightarrow \qu\not\in\sigma_r^S(A)^*$, and therefore $\apo(A^\dagger)^*=\sigma_r^S(A).$ Let $\qu\in\partial\sigma_S(A)$, then by theorem \ref{T1} $\qu\in\apo(A)=\sigma_l^S(A)$.
If $\oqu\not\in\apo(A^\dagger)$, then by theorem \ref{T2} $\text{ran}(R_{\oqu}(A^\dagger))=\vr$ and by proposition \ref{P4} $\text{ker}(R_{\oqu}(A^\dagger))=\{0\}$. Thus $R_{\oqu}(A^\dagger)$ is invertible. Therefore $R_\qu(A)$ is invertible as $A\in\B(\vr)$, and hence $\qu\not\in\sigma_S(A)$, which is a contradiction. Therefore, $\oqu\in\apo(A^\dagger)$, which implies $\qu\in\apo(A^\dagger)^*=\sigma_r^S(A)$. Thus $\qu\in\sigma_l^S(A)\cap\sigma_r^S(A)$.   
\end{proof}
Following the complex formalism in the following we define the S-compression spectrum for an operator $A\in\B(\vr)$.
\begin{definition}\label{DC}
The spherical compression spectrum of an operator $A\in\B(\vr)$, denoted by $\sigma_c^S(A)$, is defined as
$$\sigma_c^S(A)=\{\qu\in\quat~~|~~\text{ran}(R_\qu(A))~~~\text{is not dense in}~~\vr~\}.$$
\end{definition}
\begin{proposition}\label{CP1}
Let $A\in\B(\vr)$ and $\qu\in\quat$. Then,
\begin{enumerate}
\item[(a)] $\qu\in\sigma_c^S(A)$ if and only if $\oqu\in\sigma_{pS}(A)$.
\item[(b)] $\sigma_c^S(A)\subseteq\sigma_r^S(A)$.
\item[(c)] $\sigma_S(A)=\apo(A)\cup\sigma_c^S(A)$.
\end{enumerate}
\end{proposition}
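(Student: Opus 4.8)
The plan is to dispose of (a), (b), (c) in turn, each by reduction to material already in hand. For (a) I would convert non-density into a kernel condition on the adjoint: in a quaternionic Hilbert space (as in the complex case, via Proposition \ref{ort}) the subspace $\ra(R_\qu(A))$ is dense iff $\ra(R_\qu(A))^{\perp}=\{0\}$, and by Proposition \ref{IP30}(a) one has $\ra(R_\qu(A))^{\perp}=\kr(R_\qu(A)^{\dagger})$; hence $\qu\in\sigma_c^S(A)$ iff $\kr(R_\qu(A)^{\dagger})\neq\{0\}$. The only computation needed is that the pseudo-resolvent has real coefficients, so
\[ R_\qu(A)^{\dagger}=(A^{\dagger})^{2}-2\,\text{Re}(\qu)\,A^{\dagger}+|\qu|^{2}\Iop=R_{\oqu}(A^{\dagger}), \]
using $\text{Re}(\oqu)=\text{Re}(\qu)$, $|\oqu|=|\qu|$ and the fact that real scalars pass through the adjoint. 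Therefore $\qu\in\sigma_c^S(A)$ iff $\kr(R_{\oqu}(A^{\dagger}))\neq\{0\}$, i.e. iff $\oqu$ belongs to $\sigma_{pS}(A^{\dagger})$, which is (a).

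For (b): if $\qu\in\sigma_c^S(A)$ then $\ra(R_\qu(A))$ is not dense, hence in particular $\ra(R_\qu(A))\neq\vr$, and by Proposition \ref{NP2}(c) this means $R_\qu(A)$ is not right invertible, i.e. $\qu\in\sigma_r^S(A)$. For (c), the inclusion $\apo(A)\cup\sigma_c^S(A)\subseteq\sigma_S(A)$ is immediate: if $\qu\in\apo(A)$ then $R_\qu(A)$ is not bounded below, hence not invertible by Theorem \ref{NT1}, so $\qu\in\sigma_S(A)$; and $\sigma_c^S(A)\subseteq\sigma_r^S(A)\subseteq\sigma_S(A)$ by (b) together with the fact that a non-right-invertible operator is non-invertible. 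For the reverse inclusion, take $\qu\in\sigma_S(A)$ with $\qu\notin\apo(A)$. By Proposition \ref{P4} we then have $\kr(R_\qu(A))=\{0\}$ and $\ra(R_\qu(A))$ closed; were $\ra(R_\qu(A))$ also dense it would equal $\vr$, making $R_\qu(A)$ a continuous bijection of $\vr$, hence invertible with bounded inverse by Theorem \ref{open}(b) and contradicting $\qu\in\sigma_S(A)$. So $\ra(R_\qu(A))$ is not dense, i.e. $\qu\in\sigma_c^S(A)$, finishing (c).

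No step is genuinely difficult here: the proposition is assembled from Propositions \ref{IP30}, \ref{ort}, \ref{NP2}, \ref{P4} and the bounded/open mapping theorems \ref{open} and \ref{NT1}. The one place that wants care is the adjoint bookkeeping in (a) --- namely verifying $R_\qu(A)^{\dagger}=R_{\oqu}(A^{\dagger})$, which works precisely because the coefficients of $R_\qu$ are real, so the conjugation anomaly $(\qu A)^{\dagger}\neq\oqu A^{\dagger}$ recalled in the introduction never intervenes --- and, secondarily, the routine point-set topology (density $\Leftrightarrow$ trivial orthogonal complement, closed-and-dense $\Rightarrow$ everything) underlying both (a) and (c).
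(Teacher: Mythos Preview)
Your proof is correct and follows essentially the same approach as the paper: part (a) via the range--kernel duality $\ra(R_\qu(A))^{\perp}=\kr(R_{\oqu}(A^{\dagger}))$ (and you, like the paper's own proof, correctly arrive at $\oqu\in\sigma_{pS}(A^{\dagger})$ rather than the stated $\sigma_{pS}(A)$, which is evidently a typo in the proposition), and part (c) via Proposition~\ref{P4} together with the open mapping theorem. The only difference worth noting is in (b): you argue directly from Proposition~\ref{NP2}(c) (non-dense range $\Rightarrow$ range $\neq\vr$ $\Rightarrow$ not right invertible), whereas the paper routes through Theorem~\ref{T2} twice; your argument is shorter and more elementary, and in (c) your case split ($\qu\in\sigma_S(A)$, $\qu\notin\apo(A)$) is also slightly cleaner than the paper's version.
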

\begin{proof}
(a)~$\qu\in\sigma_c^S(A)\Leftrightarrow \text{ran}(R_\qu(A))=\text{ker}(R_{\oqu}(A^\dagger))^\perp $ is not dense in $\vr\Leftrightarrow \text{ker}(R_{\oqu}(A^\dagger))\not=\{0\}\Leftrightarrow \oqu\in\sigma_{pS}(A^\dagger).$\\
(b)~Let $\qu\in \sigma_c^S(A)$, then $\text{ran}(R_\qu(A))$ is not dense in $\vr$. If $\qu\not\in\sigma_r^S(A)$, then by theorem \ref{T2} $\oqu\not\in\apo(A^\dagger)$, thus, again by theorem \ref{T2}, $\text{ran}(R\qu(A))=\vr$, which is contradiction. Therefore $\qu\in\sigma_r^S(A)$ and hence $\sigma_c^S(A)\subseteq\sigma_r^S(A).$\\
(c)~Let $\qu\in\sigma_S(A)$, then $R\qu(A)$ is not invertible in $\B(\vr)$. That is, $\text{ker}(R_\qu(A))\not=\{0\}$ or $\text{ran}(R_\qu(A))\not=\vr$. Therefore, by proposition \ref{P4}, $\qu\in\apo(A)$. Hence $\sigma_S(A)\subseteq\apo(A)\cup\sigma_c^S(A)$.\\
Let $\qu\in\apo(A)\cup\sigma_c^S(A)$. If $\qu\in\apo(A)$, then by proposition \ref{P4} $\text{ker}(R_\qu(A))\not=\{0\}$ or $\text{ran}(R_\qu(A))$ is not closed. Hence,  $\text{ker}(R_\qu(A))\not=\{0\}$ or $\text{ran}(R_\qu(A))\not=\vr$. Thus $\qu\in\sigma_S(A)$. If $\qu\in\sigma_c^S(A)$, then  $\text{ran}(R_\qu(A))$ is not dense in $\vr$. Therefore  $\text{ran}(R_\qu(A))\not=\vr$ and hence $\qu\in\sigma_S(A)$. In summary $\apo(A)\cup\sigma_c^S(A)\subseteq \sigma_S(A)$. Hence (c) holds.
\end{proof}
\section{Fredholm operators in the quaternionic setting}
In the complex setting Fredholm operators are studied in Banach spaces and Hilbert spaces for bounded and even to unbounded linear operators. In this section we shall study the theory for quaternionic bounded linear operators on separable Hilbert spaces. In this regard let $\vr$ and $\ur$ be two separable right quaternionic Hilbert spaces.
\begin{definition}\label{FD1}
A Fredholm operator is an operator $A\in\B(\vr,\ur)$ such that $\text{ker}(A)$ and $\text{coker}(A)=\ur/\text{ran}(A)$ are finite dimensional. The dimension of the cokernel is called the codimension, and it is denoted by $\text{codim}(A).$
\end{definition}
\begin{proposition}\label{FP1}
If $A\in\B(\vr,\ur)$ is a Fredholm operator, then $\text{ran}(A)$ is closed.
\end{proposition}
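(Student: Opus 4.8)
The plan is to show that $\ra(A)$ is closed by exploiting the finite-dimensionality of $\cokr(A)$. Since $\kr(A)$ is a closed subspace of $\vr$ (it is a kernel of a bounded operator, cf.\ Proposition \ref{IP30}(c)), Proposition \ref{ort}(b) gives the orthogonal decomposition $\vr = \kr(A) \oplus \kr(A)^\perp$. I would restrict $A$ to $\kr(A)^\perp$, obtaining an injective bounded right linear operator $A_0 := A\up_{\kr(A)^\perp} : \kr(A)^\perp \longrightarrow \ur$ with $\ra(A_0) = \ra(A)$. It therefore suffices to prove that $\ra(A_0)$ is closed, and by Theorem \ref{NT1} (bounded inverse theorem) this is equivalent to showing that $A_0$ is bounded below, i.e.\ there exists $c>0$ with $\|A_0\phi\|_{\ur} \geq c\|\phi\|_{\kr(A)^\perp}$ for all $\phi$.

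First I would argue that it is enough to treat the case where $\ra(A)$ is dense, after compressing by the finite-dimensional complement. Concretely: since $\dim(\cokr(A)) = n < \infty$, pick vectors $\psi_1,\dots,\psi_n \in \ur$ whose images span $\ur/\ra(A)$, let $F$ be the (finite-dimensional, hence closed by Proposition \ref{ort}(c)) right subspace they generate, and work with the operator $\widetilde{A} : \kr(A)^\perp \oplus F \longrightarrow \ur$, $\widetilde{A}(\phi,\omega) = A\phi + \omega$, which is a bounded bijection; Theorem \ref{open}(b) then gives $\widetilde{A}^{-1} \in \B(\ur, \kr(A)^\perp \oplus F)$. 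A bounded-below estimate for $\widetilde{A}$ transfers to $A_0$ on the complemented piece: if $A\phi_k \to \psi$ with $\phi_k \in \kr(A)^\perp$, then $\widetilde{A}(\phi_k,0) \to \psi$, so $(\phi_k,0) = \widetilde{A}^{-1}\widetilde{A}(\phi_k,0) \to \widetilde{A}^{-1}\psi =: (\phi,\omega)$ in the direct-sum norm; continuity of the projection onto $F$ forces $\omega = 0$ and $\phi_k \to \phi$ in $\kr(A)^\perp$, whence $A\phi = \psi \in \ra(A)$. This shows $\ra(A)$ is closed.

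The step I expect to need the most care is the verification that $\widetilde{A} : \kr(A)^\perp \oplus F \to \ur$ is genuinely a bijection onto $\ur$, and that the direct sum $\kr(A)^\perp \oplus F$ is again a right quaternionic Hilbert space on which Theorem \ref{open} applies. Surjectivity is clear by construction of $F$; for injectivity one must check $\ra(A) \cap F$ behaves correctly, i.e.\ that $A\phi + \omega = 0$ with $\phi \in \kr(A)^\perp$, $\omega \in F$ implies $\phi = 0$ and $\omega = 0$ — this uses that the $\psi_j$ were chosen so their classes are a \emph{basis} of the quotient, so no nonzero element of $F$ lies in $\ra(A)$, forcing $\omega = 0$ and then $A\phi = 0$ with $\phi \in \kr(A)^\perp$ gives $\phi = 0$. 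The only quaternionic subtlety is that all the linear-algebra facts about finite-dimensional right $\quat$-subspaces (they are closed, complemented, and carry a Hilbert-space structure) are available from Proposition \ref{ort}(c) and Proposition \ref{Com}, so the complex proof goes through essentially verbatim. Alternatively, one can bypass the direct-sum construction entirely: let $Q : \ur \to \ur/\overline{\ra(A)}$ be the quotient map, note $\overline{\ra(A)} = \ra(A) \oplus (\text{something finite-dimensional inside } \cokr(A))$, and deduce closedness directly from the bounded-below criterion of Theorem \ref{NT1} applied to $A_0$; I would present whichever is shorter once the details are written out.
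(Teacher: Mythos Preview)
Your proof is correct and follows essentially the same approach as the paper: both restrict to $\ker(A)^\perp$, exploit a finite-dimensional algebraic complement of $\ra(A)$ in $\ur$, and invoke the open mapping theorem (Theorem \ref{open}) to obtain a bounded inverse that forces limits of $\ra(A)$-sequences back into $\ra(A)$. The only cosmetic difference is that the paper \emph{compresses the codomain} via the orthogonal projection $P:\ur\to V^\perp$ and applies the open mapping theorem to $G=PA:\ker(A)^\perp\to V^\perp$, whereas you \emph{enlarge the domain} to $\ker(A)^\perp\oplus F$ and apply it to $\widetilde{A}(\phi,\omega)=A\phi+\omega$; these are dual packagings of the same idea.
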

\begin{proof}
Restrict $A$ to $\text{ker}(A)^\perp$, that is, $A\vert_{\text{ker}(A)^\perp}:\text{ker}(A)^\perp\longrightarrow\ur$. Then we can assume that $\text{ker}(A)=\{0\}.$ Since $A$ is Fredholm, assume that $\text{codim}(A)=\dim(\ur/\text{ran}(A))<\infty.$ Therefore, there exist a finite dimensional subspace $V\subseteq\ur$ such that $\ur=\text{ran}(A)\oplus V$. Since $V$ is finite dimensional, $V$ is closed. Hence $\ur=V\oplus V^\perp$. Let $P:\ur\longrightarrow V^\perp$ be the orthogonal projection onto $V^\perp$. Hence $P$ is a bounded bijection. Let $G:=PA:\text{ker}(A)^\perp\longrightarrow V^\perp$, which is a composition of two bounded operators and so $G$ is a bounded bijection as $A$ and $P$ are bounded bijections and thereby continuous. Therefore by the open mapping theorem $G^{-1}: V^\perp\longrightarrow \text{ker}(A)^\perp$ is bounded.\\
Let $\psi\in\overline{\text{ran}(A)}$. Then, there exist a sequence $\{\phi_n\}\subset\vr$ such that $\displaystyle\lim_n A(\phi_n)=\psi$. Thus $\displaystyle\lim_n PA(\phi_n)=P(\psi)$ exists in $V^\perp$. That is $\displaystyle\lim_n G(\phi_n)=P(\psi)$ exists in $V^\perp$. Hence, $\phi:=\displaystyle\lim_n \phi_n=G^{-1}P(\psi)=A^{-1}(\psi)$ exists in $\vr$. Therefore, $\psi=A(\phi)\in\text{ran}(A)$. Hence $\text{ran}(A)$ is closed. 
\end{proof}
\begin{definition}\label{FD2}
Let $A\in\B(\vr,\ur)$ be a Fredholm operator. Then the index of $A$ is the integer, $\text{ind}(A)=\dim(\text{ker}(A))-\dim(\text{coker}(A)).$
\end{definition}
\begin{remark}\label{FR1}
Since $\ra(A)$ is closed, we have $\ur=\ra(A)\oplus\ra(A)^\perp=\ra(A)\oplus\kr(A^\dagger).$
Therefore, $\text{coker}(A)=\ur/\ra(A)\cong\kr(A^\dagger).$ Thus,
$$\text{ind}(A)=\dim(\kr(A))-\dim(\kr(A^\dagger)).$$
\end{remark}
\begin{theorem}\label{FT1}
Let $A\in\B(\vr,\ur)$ be bijective, and let $K\in\B_0(\vr,\ur)$ be compact. Then $A+K$ is a Fredholm operator.
\end{theorem}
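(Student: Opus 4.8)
The plan is to reduce the claim to the Riesz--Schauder analysis of an operator of the form $\Iop+\text{(compact)}$ on the single space $\vr$. Since $A$ is bijective, Theorem \ref{open}(b) gives $A^{-1}\in\B(\ur,\vr)$, so we may factor
\[
A+K=A(\Iop+S),\qquad S:=A^{-1}K\in\B(\vr),
\]
and $S$ is compact by Proposition \ref{com1}(d). As $A$ is a bounded bijection with bounded inverse it is a linear homeomorphism; consequently $\kr(A+K)=\kr(\Iop+S)$, the set $\ra(A+K)=A(\ra(\Iop+S))$ is closed precisely when $\ra(\Iop+S)$ is closed, and $\psi+\ra(A+K)\mapsto A^{-1}\psi+\ra(\Iop+S)$ is an isomorphism $\ur/\ra(A+K)\cong\vr/\ra(\Iop+S)$. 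So it suffices to show that $\Iop+S$ has finite-dimensional kernel, closed range, and finite-dimensional cokernel.

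The kernel is finite-dimensional at once: $\kr(\Iop+S)=\kr((-1)\Iop-S)$ and, since $S$ is compact and $-1\neq0$, Proposition \ref{PP22} gives $\dim\kr(\Iop+S)<\infty$. For the closed range, set $N:=\kr(\Iop+S)$, which is closed by Proposition \ref{ort}(c), so $\vr=N\oplus N^{\perp}$ by Proposition \ref{ort}(b). I would then show that $T:=(\Iop+S)|_{N^{\perp}}$ is bounded below: otherwise there are unit vectors $\phi_n\in N^{\perp}$ with $(\Iop+S)\phi_n\to0$, and compactness of $S$ yields a subsequence with $S\phi_{n_k}\to\psi$; then $\phi_{n_k}=(\Iop+S)\phi_{n_k}-S\phi_{n_k}\to-\psi$, so $\|\psi\|=1$, $\psi\in N^{\perp}$, and by continuity $(\Iop+S)(-\psi)=0$, forcing $\psi\in N\cap N^{\perp}=\{0\}$, a contradiction. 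Being injective and bounded below, $T$ has closed range by Theorem \ref{NT1}, and $\ra(\Iop+S)=\ra(T)$ since $N=\kr(\Iop+S)$; hence $\ra(\Iop+S)$ is closed (and with it $\ra(A+K)$).

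Finally, with $\ra(\Iop+S)$ closed, Proposition \ref{ort}(b) gives $\vr=\ra(\Iop+S)\oplus\ra(\Iop+S)^{\perp}$, and $\ra(\Iop+S)^{\perp}=\kr((\Iop+S)^{\dagger})=\kr(\Iop+S^{\dagger})$ by Proposition \ref{IP30}(a). Since $S^{\dagger}$ is compact by Theorem \ref{comeq} and $-1\neq0$, Proposition \ref{PP22} again gives $\dim\kr(\Iop+S^{\dagger})<\infty$, i.e. $\dim(\vr/\ra(\Iop+S))<\infty$. Thus $\Iop+S$ is Fredholm, and by the reduction above $A+K$ is Fredholm. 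The one genuinely substantial step is the boundedness-below assertion that yields the closed range; the remainder is bookkeeping with the identities $(\Iop+S)^{\dagger}=\Iop+S^{\dagger}$ and $\ra(B)^{\perp}=\kr(B^{\dagger})$ together with the homeomorphism properties of $A$. I would take care to phrase the compactness argument with sequences rather than nets so that it matches Definition \ref{PD1}.
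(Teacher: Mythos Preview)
Your proof is correct, and the closed-range step via the bounded-below contradiction on $N^{\perp}$ is essentially the same argument the paper runs (the paper calls it a ``Claim'' and applies it directly to $(A+K)|_{\kr(A+K)^{\perp}}$ rather than to $(\Iop+S)|_{N^{\perp}}$). Where you genuinely diverge is in the opening move: you factor $A+K=A(\Iop+S)$ with $S=A^{-1}K$ compact and then pull everything back through the homeomorphism $A$, whereas the paper never factors and instead argues with $A+K$ throughout, proving $\dim\kr(A+K)<\infty$ and $\dim\kr(A^{\dagger}+K^{\dagger})<\infty$ from scratch by the sequential compactness trick (bounded sequence in the kernel $\Rightarrow$ $K\phi_n$ has a convergent subsequence $\Rightarrow$ $A\phi_n=-K\phi_n$ converges $\Rightarrow$ $\phi_n$ converges via $A^{-1}$). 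Your route lets you recycle Proposition~\ref{PP22} for both the kernel and the cokernel (via $S^{\dagger}$), so you avoid rewriting that argument twice; the paper's route is more self-contained at this point in the development but duplicates work already packaged in Proposition~\ref{PP22}. Either way the substantive analytic content---compactness forcing finite-dimensional kernels and the bounded-below estimate on the orthogonal complement---is the same.
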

\begin{proof}
Since $A$ is bijective, $A^{-1}$ exists. Since the kernel of a bounded right linear operator is closed, $\kr(A+K)$ is a Hilbert space. Thus, for $\phi\in\kr(A+K)$, we have $A\phi=-K\phi$. Let $\{\phi_n\}_{n=1}^{\infty}\subseteq\kr(A+K)$ be a bounded sequence. Since $K$ is a compact operator, $\{K\phi_n\}_{n=1}^{\infty}$ has a convergence subsequence, $\{K\phi_{n_k}\}_{k=1}^{\infty}$. Since, for all $k$, $ \phi_{n_{k}}\in\kr(A+K)$, we have
$$\{K\phi_{n_{k}}\}_{k=1}^\infty=\{-A\phi_{n_k}\}_{k=1}^\infty.$$
Therefore, $\displaystyle\lim_{k\rightarrow\infty}A\phi_{n_k}=\psi$ exists. Hence, as $A^{-1}$ is continuous, we get $\displaystyle\lim_{k\longrightarrow\infty}\phi_{n_k}=A^{-1}\psi.$ That is, any bounded sequence in $\kr(A+K)$ has a convergence subsequence. Since an infinite dimensional Hilbert space has an infinite orthonormal sequence with no convergence subsequence, $\dim(\kr(A+K))<\infty.$\\
Since $A$ is invertible and $K$ is compact, $A^\dagger$ is invertible and $K^\dagger$ is compact. Therefore, by the above argument $\dim(\kr(A^\dagger+K^\dagger))<\infty.$ Thus, since $\ur=\overline{\ra(A+K)}\oplus\kr(A^\dagger+K^\dagger)$, to show that $\text{codim}(A+K)<\infty$, it is enough to show that $\ra(A+K)$ is closed. Split $\vr$ as $\vr=\HI\oplus\kr(A+K)$, where $\HI$ is a subspace of $\vr$ such that $\HI\perp\kr(A+K)$, and consider the restriction of $A+K$ to $\HI$.\\
{\em Claim:} For $\phi\in\HI$, the inequality
$$\|\phi\|_{\vr}\leq c\|(A+K)\phi\|_{\ur}$$
holds for some $c>0$.\\
If for all $c>0$ there exists $\phi\in\HI$ such that $\|\phi\|_{\vr}> c\|(A+K)\phi\|_{\ur},$ then there exists sequences $\{c_n\}_{n=1}^{\infty}\subseteq (0,\infty)$ and $\{\phi_n\}_{n=1}^\infty\subseteq\HI$ with $\|\phi_n\|_{\vr}=1$ for all $n$ such that $c_n\longrightarrow\infty$ as $n\longrightarrow\infty$. Further, for all $n$,
$$1=\|\phi_n\|_{\vr}> c_n||(A+K)\phi_n\|_{\ur}.$$
Thus $\displaystyle\|(A+K)\phi_n\|_{\ur}<\frac{1}{c_n}\longrightarrow 0$ as $n\longrightarrow\infty$. Since $K$ is compact and, for each $n$, $\|\phi_n\|_{\vr}=1$, there exists a convergent subsequence $\{K\phi_{n_k}\}_{k=1}^{\infty}$ of $\{K\phi_n\}_{n=1}^\infty$. Assume $K\phi_{n_k}\longrightarrow\psi\in\ur$ as $k\longrightarrow\infty.$ Then, $A\phi_{n_k}\longrightarrow -\psi\in\ur$ as $k\longrightarrow\infty$. Hence $\phi_{n_k}=A^{-1}A\phi_{n_k}\longrightarrow \eta=-A^{-1}\psi$ as $k\longrightarrow\infty$, where $\eta\in\HI$ and $\|\eta\|_{\vr}=1$ as $\|\phi_{n_k}\|_{\vr}=1$ for all $k$. But $\displaystyle(A+K)\eta=\lim_{k\longrightarrow\infty}(A\phi_{n_k}+K\phi_{n_k})=\lim_{k\longrightarrow\infty} 0=0$. Therefore $\eta\in\kr(A+K)$, which is a contradiction to $\HI\perp\kr(A+K)$. Hence the claim holds. Thus, by proposition \ref{IP4}, $\ra(A+K)$ is closed. Hence $\text{codim}(A+K)<\infty$ and $A+K$ is Fredholm.
\end{proof}
\begin{proposition}\label{FP2}
If $A\in\B(\vr,\ur)$ is Fredholm then $A^\dagger\in\B(\ur,\vr)$ is Fredholm.
\end{proposition}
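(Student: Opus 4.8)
The plan is to deduce the Fredholmness of $A^\dagger$ entirely from orthogonal decompositions, exploiting that a Fredholm operator has closed range. First I would recall that, since $A$ is Fredholm, Proposition~\ref{FP1} gives that $\ra(A)$ is closed in $\ur$; hence Proposition~\ref{ort}(b) yields $\ur=\ra(A)\oplus\ra(A)^\perp$, and Proposition~\ref{IP30}(a) rewrites this as $\ur=\ra(A)\oplus\kr(A^\dagger)$. Therefore the canonical quotient map restricts to a right $\quat$-linear isomorphism $\cokr(A)=\ur/\ra(A)\cong\kr(A^\dagger)$, so that $\dim(\kr(A^\dagger))=\dim(\cokr(A))<\infty$. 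This is essentially the content of Remark~\ref{FR1}, so this step costs nothing new.

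Next I would treat the cokernel of $A^\dagger$. Since $\ra(A)$ is closed, Proposition~\ref{NP1} gives that $\ra(A^\dagger)$ is closed in $\vr$; then Proposition~\ref{ort}(b) gives $\vr=\ra(A^\dagger)\oplus\ra(A^\dagger)^\perp$, and Proposition~\ref{IP30}(b) identifies $\ra(A^\dagger)^\perp=\kr(A)$. Consequently $\cokr(A^\dagger)=\vr/\ra(A^\dagger)\cong\kr(A)$ as right $\quat$-vector spaces, whence $\dim(\cokr(A^\dagger))=\dim(\kr(A))<\infty$ because $A$ is Fredholm. Combining the two paragraphs, $\kr(A^\dagger)$ and $\cokr(A^\dagger)$ are both finite dimensional and $A^\dagger\in\B(\ur,\vr)$ (the adjoint of a bounded right linear operator being bounded), so $A^\dagger$ is Fredholm. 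As a byproduct one reads off $\ind(A^\dagger)=\dim(\kr(A^\dagger))-\dim(\cokr(A^\dagger))=\dim(\cokr(A))-\dim(\kr(A))=-\ind(A)$, which I would expect the next result to use.

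I do not anticipate a genuine obstacle: every ingredient --- closedness of $\ra(A)$ for Fredholm $A$, the equivalence ``$\ra(A)$ closed if and only if $\ra(A^\dagger)$ closed'', and the two orthogonal-complement identities $\ra(A)^\perp=\kr(A^\dagger)$ and $\ra(A^\dagger)^\perp=\kr(A)$ --- is already available in the excerpt. The only point deserving a sentence of care is that the identifications $\ur/\ra(A)\cong\kr(A^\dagger)$ and $\vr/\ra(A^\dagger)\cong\kr(A)$ are isomorphisms of \emph{right $\quat$-modules}, not merely of real vector spaces, so that the finite dimensions (in the quaternionic sense used in Definitions~\ref{PD3} and \ref{PD4}) transfer correctly; this is immediate because the orthogonal direct sums above are right $\quat$-linear splittings.
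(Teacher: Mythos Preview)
Your proof is correct and follows essentially the same route as the paper's: use Proposition~\ref{FP1} to get $\ra(A)$ closed, then the orthogonal splittings together with Proposition~\ref{IP30} to identify $\kr(A^\dagger)\cong\cokr(A)$ and $\cokr(A^\dagger)\cong\kr(A)$. If anything, your version is slightly more careful than the paper's, since you explicitly invoke Proposition~\ref{NP1} to obtain that $\ra(A^\dagger)$ is closed before writing $\ra(A^\dagger)=\kr(A)^\perp$, whereas the paper passes over this point.
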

\begin{proof}
Since $A$ is Fredholm, $\dim(\kr(A))<\infty$ and $\dim(\ur/\ra(A))<\infty$. Then by proposition \ref{FP1}, $\ra(A)$ is closed. Therefore, $\ur=\ra(A)\oplus\ra(A)^\perp=\ra(A)\oplus\kr(A^\dagger)$. Thus, $\ur/\ra(A)\cong\kr(A^\dagger)$. Hence $\dim(\kr(A^\dagger))<\infty$. Also, since by proposition \ref{IP30}, $\kr(A)$ is closed, we have $\text{coker}(A^\dagger)=\vr/\ra(A^\dagger)=\vr/\kr(A)^\perp\cong \kr(A)$
Thus $\dim(\text{coker}(A^\dagger))=\dim(\kr(A))<\infty.$ Hence $A^\dagger$ is Fredholm.
\end{proof}
\begin{remark}\label{FR2}
From propositions \ref{FP1} and \ref{FP2}, $\ra(A^\dagger)$ is closed. Therefore,
$$\text{ind}(A^\dagger)=\dim(\kr(A^\dagger))-\dim(\kr((A^\dagger)^\dagger))=\dim(\kr(A^\dagger))-\dim(\kr(A))=-\text{ind}(A).$$
\end{remark}
\begin{theorem}\label{FT3}
$A\in\B(\vr,\ur)$ is Fredholm if and only if there exist $S_1,S_2\in\B(\ur,\vr)$ and compact operators $K_1$ and $K_2$, on $\vr$ and $\ur$ respectively, such that
$$S_1A=\mathbb{I}_{\vr}+K_1\quad\text{and}\quad AS_2=\mathbb{I}_{\ur}+K_2.$$
\end{theorem}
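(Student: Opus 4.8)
\textbf{Proof proposal for Theorem \ref{FT3} (Atkinson's theorem).}
The plan is to prove both implications separately, using the structural results about Fredholm operators already established (Propositions \ref{FP1}, \ref{FP2} and Theorems \ref{FT1}, \ref{PD22}). For the forward direction, suppose $A$ is Fredholm. Then $\kr(A)$ is finite dimensional, hence closed, and by Proposition \ref{ort} we may split $\vr = \kr(A) \oplus \kr(A)^\perp$. By Proposition \ref{FP1} the range $\ra(A)$ is closed, and since $\text{coker}(A)$ is finite dimensional we split $\ur = \ra(A) \oplus \ra(A)^\perp$ with $\ra(A)^\perp = \kr(A^\dagger)$ finite dimensional (Remark \ref{FR1}). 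The restriction $A_0 := A\vert_{\kr(A)^\perp} : \kr(A)^\perp \longrightarrow \ra(A)$ is a bounded bijection between right quaternionic Hilbert spaces, so by the open mapping theorem (Theorem \ref{open}(b)) its inverse $A_0^{-1}$ is bounded. Define $S : \ur \longrightarrow \vr$ by $S\psi = A_0^{-1}(P\psi)$, where $P : \ur \longrightarrow \ra(A)$ is the orthogonal projection; then $S \in \B(\ur,\vr)$.

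Next I would compute $SA$ and $AS$ explicitly. For $\phi \in \vr$ write $\phi = \phi_1 + \phi_2$ with $\phi_1 \in \kr(A)$, $\phi_2 \in \kr(A)^\perp$; then $A\phi = A\phi_2 \in \ra(A)$, so $SA\phi = A_0^{-1}A\phi_2 = \phi_2 = \phi - \phi_1$. Hence $SA = \mathbb{I}_{\vr} - Q_1$, where $Q_1$ is the orthogonal projection of $\vr$ onto $\kr(A)$, which is a finite rank operator and therefore compact by Theorem \ref{PD22}(a). Setting $K_1 = -Q_1$ gives $S_1 A = \mathbb{I}_{\vr} + K_1$ with $S_1 := S$. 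Similarly, for $\psi \in \ur$ write $\psi = \psi_1 + \psi_2$ with $\psi_1 \in \ra(A)$, $\psi_2 \in \ra(A)^\perp$; then $AS\psi = A A_0^{-1}\psi_1 = \psi_1 = \psi - \psi_2$, so $AS = \mathbb{I}_{\ur} - Q_2$, where $Q_2$ is the orthogonal projection of $\ur$ onto $\ra(A)^\perp = \kr(A^\dagger)$, again finite rank hence compact. Setting $K_2 = -Q_2$ and $S_2 := S$ completes the forward direction (in fact with $S_1 = S_2$).

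For the converse, suppose $S_1 A = \mathbb{I}_{\vr} + K_1$ and $A S_2 = \mathbb{I}_{\ur} + K_2$ with $K_1, K_2$ compact. From the first relation, $\kr(A) \subseteq \kr(S_1 A) = \kr(\mathbb{I}_{\vr} + K_1)$; since $-K_1$ is compact, Proposition \ref{PP22} (applied with $\qu = 1$, using that $\mathbb{I}_{\vr}+K_1 = \qu\Iop - (-K_1)$) gives $\dim\kr(\mathbb{I}_{\vr}+K_1) < \infty$, hence $\dim\kr(A) < \infty$. From the second relation, $\ra(A) \supseteq \ra(AS_2) = \ra(\mathbb{I}_{\ur} + K_2)$. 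Now I would argue that $\ra(\mathbb{I}_{\ur}+K_2)$ has finite codimension in $\ur$: indeed $(\mathbb{I}_{\ur}+K_2)^\dagger = \mathbb{I}_{\ur} + K_2^\dagger$ with $K_2^\dagger$ compact by Theorem \ref{comeq}, so $\dim\kr(\mathbb{I}_{\ur}+K_2^\dagger) < \infty$ by Proposition \ref{PP22}; the closedness of $\ra(\mathbb{I}_{\ur}+K_2)$ follows because $\mathbb{I}_{\ur}+K_2$ is a bijective-plus-compact operator and hence (by the argument in the proof of Theorem \ref{FT1}) has closed range, whence $\ur = \ra(\mathbb{I}_{\ur}+K_2) \oplus \kr(\mathbb{I}_{\ur}+K_2^\dagger)$ shows $\text{codim}(\mathbb{I}_{\ur}+K_2) < \infty$. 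Since $\ra(A) \supseteq \ra(\mathbb{I}_{\ur}+K_2)$, we get $\dim(\ur/\ra(A)) \leq \dim(\ur/\ra(\mathbb{I}_{\ur}+K_2)) < \infty$. Therefore both $\kr(A)$ and $\text{coker}(A)$ are finite dimensional and $A$ is Fredholm.

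\textbf{Main obstacle.} The forward direction is essentially bookkeeping once the open mapping theorem is invoked, and the finite-rank-is-compact fact is already available. The delicate point is the converse: establishing that $\ra(A)$ has finite codimension from the factorization. One must carefully justify that $\ra(\mathbb{I}_{\ur}+K_2)$ is closed with finite-dimensional complement --- this requires invoking the "bijective plus compact is Fredholm" mechanism of Theorem \ref{FT1} with the identity in place of the bijective operator, together with Proposition \ref{PP22} and the adjoint-compactness from Theorem \ref{comeq}. Alternatively, one could isolate a short lemma stating that $\mathbb{I}+K$ is Fredholm of index zero for any compact $K$, and cite it here; I expect the cleanest exposition to extract exactly that lemma first. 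A minor subtlety worth a remark is that the non-commutativity of $\quat$ plays no role here because $\mathbb{I}$, the projections, and the compositions involved carry no quaternionic scalar coefficients, so the argument is a verbatim transcription of the complex proof.
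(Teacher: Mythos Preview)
Your proposal is correct and follows essentially the same strategy as the paper: construct a pseudo-inverse via the bijective restriction $A\vert_{\kr(A)^\perp}$ for the forward direction, and use the kernel/range inclusions together with the fact that $\mathbb{I}+K$ is Fredholm for the converse.

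Two minor differences in execution are worth noting. In the forward direction you compute both $SA$ and $AS$ directly with the single operator $S$, obtaining $S_1=S_2$; the paper instead only verifies $AS_2=\mathbb{I}_{\ur}+K_2$ directly and then invokes Proposition~\ref{FP2} to pass to $A^\dagger$, applies the same construction there, and takes adjoints to produce $S_1$. Your route is shorter and avoids the adjoint detour. In the converse the paper is the more economical one: it simply cites Theorem~\ref{FT1} (with the identity as the bijective operator) to conclude that $\mathbb{I}_{\vr}+K_1$ and $\mathbb{I}_{\ur}+K_2$ are Fredholm, from which both the kernel and cokernel bounds for $A$ follow immediately. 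Your unpacking via Proposition~\ref{PP22}, Theorem~\ref{comeq}, and the closed-range argument is correct but redundant --- the ``short lemma'' you anticipate extracting is exactly Theorem~\ref{FT1} applied to the identity, which is already available.
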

\begin{proof}
($\Rightarrow$) Suppose $A\in\B(\vr,\ur)$ is a Fredholm operator. Then $A$ defines a bijective operator
$$\hat{A}:\HI_1=(\kr(A))^\perp\longrightarrow\HI_2=\ra(A)=(\kr(A^\dagger))^\perp.$$
Then $\hat{A}^{-1}:\HI_2\longrightarrow\HI_1$ exists and bounded. Consider the orthogonal projection operator $P_r:\ur\longrightarrow\HI_2$ and the inclusion operator $\mathfrak{i}:\HI_1\longrightarrow\vr$. Let $S_2=\mathfrak{i}\hat{A}^{-1}P_r:\ur\longrightarrow\vr$, then $S_2\in\B(\ur,\vr)$ and $AS_2=A\mathfrak{i}\hat{A}^{-1}P_r=\mathbb{I}_{\ur}-P_a$, where $P_a$ is the orthogonal projection operator $P_a:\ur\longrightarrow\kr(A^\dagger)$. Let $K_2=-P_a$. Since $\kr(A^\dagger)$ is finite dimensional, $K_2$ is a finite rank operator so it is compact. That is, $AS_2=\mathbb{I}_{\ur}+K_2$.\\
From proposition \ref{FP2}, $A^\dagger$ is Fredholm. Therefore, from the above argument, there exists $S_3\in\B(\vr,\ur)$ and a compact operator on $\vr$ such that $A^\dagger S_3=\mathbb{I}_{\vr}+K_3$. Hence,
$S_3^\dagger A=\mathbb{I}_{\vr}+K_3^\dagger$. Set $S_1=S_3^\dagger$ and $K_1=K_3^\dagger$, then $S_1\in\B(\ur,\vr)$,  $K_1$ is compact on $\vr$ and $S_1A=\mathbb{I}_{\vr}+K_1$.\\
($\Leftarrow$) Suppose that there are operators $S_1,S_2\in\B(\ur,\vr)$ and compact operators $K_1$ on $\vr$ and $K_2$ on $\ur$ such that $S_1A=\mathbb{I}_{\vr}+K_1$ and $AS_2=\mathbb{\ur}+K_2$. We have the obvious inclusions,
\begin{eqnarray}
\kr(A)&\subseteq&\kr(S_1A)=\kr(\mathbb{I}_{\vr}+K_1)\label{FE1}\\
\ra(A)&\supseteq&\ra(AS_2)=\ra(\mathbb{I}_{\ur}+K_2)\label{FE2}.
\end{eqnarray}
By theorem \ref{FT1}, $\mathbb{I}_{\vr}+K_1$ and $\mathbb{I}_{\ur}+K_2$ are Fredholm operators. Therefore from equation \ref{FE1}, $\dim(\kr(A))\leq\dim(\mathbb{I}_{\vr}+K_1)<\infty$ and by equation \ref{FE2}, $\text{codim}(A)\leq\text{codim}(\mathbb{I}_{\ur}+K_2)<\infty.$ Hence $A$ is a Fredholm operator.
\end{proof}
\begin{remark}\label{FR4}
Let $A\in\B(\vr,\ur),$ then
\begin{enumerate}
\item [(a)] $A$ is said to be left semi-Fredholm if there exists $B\in\B(\ur,\vr)$ and a compact operator $K_1$ on $\vr$ such that $BA=\Iop+K_1$. The set of all left semi-Fredholm operators are denoted by $\mathcal{F}_l(\vr,\ur)$ \cite{con}.
\item [(b)] $A$ is said to be right semi-Fredholm if there exists $B\in\B(\ur,\vr)$ and a compact operator $K_2$ on $\ur$ such that $AB=\Iopu+K_2$. The set of all right semi-Fredholm operators are denoted by $\mathcal{F}_r(\vr,\ur)$ \cite{con}.
\item[(c)] By theorem \ref{FT3}, the set of all Fredholm operators, $\mathcal{F}(\vr,\ur)=\mathcal{F}_l(\vr,\ur)\cap \mathcal{F}_r(\vr,\ur)$.
\item[(d)] From theorem \ref{FT3} it is also clear that every invertible right linear operator is Fredholm.
\item[(e)]Let $\mathcal{S}\FF(\vr)=\FF_l(\vr)\cup\FF_r(\vr)$. From theorem \ref{FT3} and theorem \ref{comeq}, we have
\begin{eqnarray*}
A\in\FF_l(\vr)&\Leftrightarrow& A^\dagger\in\FF_r(\vr)\\
A\in\mathcal{S}\FF(\vr)&\Leftrightarrow& A^\dagger\in\mathcal{S}\FF(\vr)\\
A\in\FF(\vr)&\Leftrightarrow& A^\dagger\in\FF(\vr).
\end{eqnarray*}
\end{enumerate}
\end{remark}
\begin{proposition}\label{FP3}
$A\in\B(\vr,\ur)$ is Fredholm if and only if  there exists $S\in\B(\ur,\vr)$ such that $AS-\mathbb{I}_{\ur}$ and $SA-\Iop$ are both finite rank operators.
\end{proposition}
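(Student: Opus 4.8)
The plan is to reuse the ``geometric'' pseudo-inverse of $A$ built in the proof of Theorem~\ref{FT3}, the point being that when $A$ is Fredholm a \emph{single} operator $S$ serves at once as a left and a right near-inverse, and that its two error terms are not merely compact but of finite rank. The two implications are then immediate: one from Theorem~\ref{FT3}, the other from a short direct computation.

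For the easy direction ($\Leftarrow$): if $S\in\B(\ur,\vr)$ is such that $AS-\mathbb{I}_{\ur}$ and $SA-\Iop$ are finite rank, then by Theorem~\ref{PD22}(a) they are compact. Putting $K_2:=AS-\mathbb{I}_{\ur}$ and $K_1:=SA-\Iop$ we get $SA=\Iop+K_1$ and $AS=\mathbb{I}_{\ur}+K_2$ with $K_1$ compact on $\vr$ and $K_2$ compact on $\ur$, so $A$ is Fredholm by Theorem~\ref{FT3} (taking $S_1=S_2=S$ there).

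For the forward direction ($\Rightarrow$), assume $A$ is Fredholm. By Proposition~\ref{FP1} the range $\ra(A)$ is closed, so $\HI_1:=\kr(A)^\perp$ and $\HI_2:=\ra(A)$ are closed subspaces (Proposition~\ref{ort}), hence right quaternionic Hilbert spaces, and $A$ restricts to a bounded bijection $\hat A:\HI_1\longrightarrow\HI_2$ whose inverse $\hat A^{-1}$ is bounded by the open mapping theorem, Theorem~\ref{open}(b). Using the decompositions $\vr=\kr(A)\oplus\HI_1$ and $\ur=\HI_2\oplus\kr(A^\dagger)$ (Proposition~\ref{ort}(b) together with Proposition~\ref{IP30}(a)), I would introduce the orthogonal projections $P_r:\ur\longrightarrow\HI_2$, $P_a:\ur\longrightarrow\kr(A^\dagger)$, $Q:\vr\longrightarrow\kr(A)$ and the inclusion $\mathfrak{i}:\HI_1\longrightarrow\vr$, and set $S:=\mathfrak{i}\,\hat A^{-1}P_r\in\B(\ur,\vr)$, which is bounded as a composition of bounded maps. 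Decomposing an arbitrary vector according to the two splittings and using that $A\mathfrak{i}=\hat A$ on $\HI_1$, one checks directly that $AS=\mathbb{I}_{\ur}-P_a$ and $SA=\Iop-Q$, so that $AS-\mathbb{I}_{\ur}=-P_a$ and $SA-\Iop=-Q$. Finally, $A$ Fredholm forces $\dim\kr(A)<\infty$ and $\dim\kr(A^\dagger)=\dim\cokr(A)<\infty$ (Remark~\ref{FR1}), whence $P_a$ and $Q$ are finite rank, which completes the proof.

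I do not expect a genuine obstacle here; the computations are of the same elementary type as those in the proof of Theorem~\ref{FT3}. The one thing needing care is that the \emph{same} $S$ must work on both sides, which is precisely why I use the two-sided orthogonal pseudo-inverse $\mathfrak{i}\,\hat A^{-1}P_r$ rather than the asymmetric construction (one side direct, the other routed through $A^\dagger$) employed in the proof of Theorem~\ref{FT3}; and one must invoke the finite-dimensionality of $\kr(A)$ and $\kr(A^\dagger)$ to upgrade ``compact error'' to ``finite-rank error'', which is exactly what distinguishes this statement from Theorem~\ref{FT3}.
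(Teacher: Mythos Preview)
Your proposal is correct and follows essentially the same route as the paper: for the forward direction both you and the paper set $S=\mathfrak{i}\,\hat A^{-1}P_r$ (the paper writes this as $\hat A P$) and check that $SA-\Iop$ and $AS-\mathbb{I}_{\ur}$ are minus the projections onto $\kr(A)$ and $\kr(A^\dagger)$ respectively, hence of finite rank; for the converse both invoke Theorem~\ref{PD22}(a) and then Theorem~\ref{FT3}. Your observation that a single two-sided pseudo-inverse is needed here, in contrast to the asymmetric $S_1,S_2$ of Theorem~\ref{FT3}, is exactly the point.
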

\begin{proof}
($\Rightarrow$) Suppose $A$ is a Fredholm operator, then $\ra(A)$ is closed and $A:\kr(A)^\perp\longrightarrow\ra(F)$ is a bijection between Hilbert spaces. Let $\hat{A}$ be the inverse of this bijection. By the open mapping theorem, $\hat{A}\in\B(\ur,\vr)$. Let $P:\ur\longrightarrow\ra(A)$ be the orthogonal projection onto $\ra(A)$. Set $S=\hat{A}P$, then
$$SA-\Iop=\hat{A}PA-\Iop=\hat{A}A-\Iop=-P_k,$$
where $P_k$ is the orthogonal projection onto $\kr(A)$, hence a finite rank operator. Also
$$AS-\mathbb{I}_{\ur}=A\hat{A}P-\mathbb{I}_{\ur}=-(\mathbb{I}_{\ur}-P),$$
which is also a finite rank operator (because $\ra(A)^\perp=\kr(A^\dagger)$ is finite dimensional as $A^\dagger$ is Fredholm). Therefore, $SA-\Iop$ and $AS-\mathbb{I}_{\ur}$ are finite rank operators and hence compact.\\
($\Leftarrow$) Now suppose that $SA-\Iop=F_1$ and $AS-\mathbb{I}_{\ur}=F_2$ with $F_1$ and $F_2$ being finite rank operators on $\vr$ and $\ur$ respectively. By proposition \ref{PD22}, $F_1$ and $F_2$ are compact operators. Therefore by theorem \ref{FT3}, $A$ is a Fredholm operator.
\end{proof}
\begin{definition}\label{FD3}
Let $V_0,\cdots,V_n$ be right quaternionic vector spaces and let $A_j:V_j\longrightarrow V_{j+1}, 0\leq j\leq n-1,$ be right linear operators. Then the sequence
$$V_0\xrightarrow{A_0} V_1\xrightarrow{A_1}V_2\xrightarrow{A_2}\cdots\xrightarrow{A_{n-2}}V_{n-1}\xrightarrow{A_{n-1}}V_n$$
is called exact if $\ra(A_j)=\kr(A_{j+1})$ for $j=0,1,\cdots n-2.$
\end{definition}
\begin{lemma}\label{FL1}
Let $V_j$ be a finite dimensional right quaternionic vector space for each $j=0,1,\cdots, n$. If
$$\{0\}=V_0\xrightarrow{A_0} V_1\xrightarrow{A_1}V_2\xrightarrow{A_2}\cdots\xrightarrow{A_{n-2}}V_{n-1}\xrightarrow{A_{n-1}}V_n=\{0\}$$
be an exact sequence with $\dim(V_j)<\infty$ for all $j=0,1,\cdots,n$, then
$$\sum_{j=0}^{n-1}(-1)^j\dim(V_j)=0.$$
\end{lemma}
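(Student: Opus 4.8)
The plan is to prove the statement by induction on $n$, after reducing the problem to the fundamental rank–nullity relation for a single right linear map between finite dimensional quaternionic vector spaces. The base cases $n=1$ and $n=2$ are immediate: for $n=1$ we have $\{0\}=V_0=V_1=\{0\}$, so the alternating sum is trivially $0$; for $n=2$, exactness forces $\ra(A_0)=\kr(A_1)$ and $\ra(A_1)=\kr(A_2)$, which with $V_0=V_2=\{0\}$ gives $V_1=\kr(A_1)$ and $V_1=\ra(A_0)=\{0\}$, so again the sum vanishes. The real content is the inductive step, and the only nontrivial analytic fact I need is that for a right linear operator $A_j:V_j\to V_{j+1}$ between finite dimensional right quaternionic vector spaces one has $\dim(V_j)=\dim(\kr(A_j))+\dim(\ra(A_j))$; this is the quaternionic rank–nullity theorem, which holds because $V_j/\kr(A_j)\cong\ra(A_j)$ as right quaternionic vector spaces and dimensions are additive over the direct sum decomposition $V_j=\kr(A_j)\oplus\kr(A_j)^\perp$ guaranteed by Proposition \ref{ort}(b),(c).

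For the inductive step, suppose the result holds for all exact sequences of length less than $n$. Given the exact sequence $\{0\}=V_0\xrightarrow{A_0}V_1\xrightarrow{A_1}\cdots\xrightarrow{A_{n-1}}V_n=\{0\}$, I would split it at an interior node. Let $W=\ra(A_{n-2})=\kr(A_{n-1})\subseteq V_{n-1}$, which is finite dimensional. Then the truncated sequence
$$\{0\}=V_0\xrightarrow{A_0}V_1\xrightarrow{A_1}\cdots\xrightarrow{A_{n-3}}V_{n-2}\xrightarrow{A_{n-2}}W\longrightarrow\{0\}$$
is exact: exactness at $V_1,\dots,V_{n-3}$ is inherited, exactness at $V_{n-2}$ says $\ra(A_{n-3})=\kr(A_{n-2})$ which is unchanged, and exactness at $W$ says $\ra(A_{n-2})=W$, which holds by definition of $W$. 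By the induction hypothesis applied to this sequence of length $n-1$,
$$\sum_{j=0}^{n-3}(-1)^j\dim(V_j)+(-1)^{n-2}\dim(W)=0.$$
On the other hand, exactness at $V_{n-1}$ gives $\ra(A_{n-2})=\kr(A_{n-1})=W$, so $\dim(\kr(A_{n-1}))=\dim(W)$; and exactness at $V_n=\{0\}$ forces $\ra(A_{n-1})=\kr(0\text{-map})$, i.e. $A_{n-1}$ is surjective onto $V_n=\{0\}$, which is automatic, while rank–nullity on $A_{n-1}:V_{n-1}\to V_n$ gives $\dim(V_{n-1})=\dim(\kr(A_{n-1}))+\dim(\ra(A_{n-1}))=\dim(W)+0=\dim(W)$. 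Therefore $\dim(V_{n-1})=\dim(W)$.

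Substituting, $\sum_{j=0}^{n-1}(-1)^j\dim(V_j)=\sum_{j=0}^{n-3}(-1)^j\dim(V_j)+(-1)^{n-2}\dim(V_{n-2})+(-1)^{n-1}\dim(V_{n-1})$. Using $\dim(V_{n-1})=\dim(W)$ and regrouping, this equals $\bigl[\sum_{j=0}^{n-3}(-1)^j\dim(V_j)+(-1)^{n-2}\dim(W)\bigr]+(-1)^{n-2}\bigl(\dim(V_{n-2})-\dim(W)\bigr)+(-1)^{n-1}\dim(W)-(-1)^{n-2}\dim(W)$, and the bracketed term is $0$ by the induction hypothesis. A cleaner bookkeeping, which I would actually write out, is to split instead at the very end in one clean move: set $r=\dim(\ra(A_{n-2}))$, observe that the induction hypothesis on the truncation gives $\sum_{j=0}^{n-2}(-1)^j\dim(V_j)=(-1)^{n-1}r$ after accounting for $V_{n-2}$ fully, and then add the term $(-1)^{n-1}\dim(V_{n-1})=(-1)^{n-1}\dim(W)=(-1)^{n-1}r$... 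I will be careful to line up the signs so the cross terms telescope to zero. The main obstacle, such as it is, is purely this index/sign bookkeeping in the inductive step; the mathematical substance is entirely carried by the quaternionic rank–nullity theorem, which presents no difficulty in the separable finite dimensional setting since orthogonal complements and quotients behave exactly as in the complex case by Proposition \ref{ort}.
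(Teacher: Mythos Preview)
Your induction is circular as written. You set $W=\ra(A_{n-2})=\kr(A_{n-1})\subseteq V_{n-1}$, but since $V_n=\{0\}$ the map $A_{n-1}:V_{n-1}\to\{0\}$ is identically zero, hence $\kr(A_{n-1})=V_{n-1}$ and therefore $W=V_{n-1}$. Your ``truncated'' sequence
\[
\{0\}=V_0\xrightarrow{A_0}V_1\xrightarrow{A_1}\cdots\xrightarrow{A_{n-3}}V_{n-2}\xrightarrow{A_{n-2}}W\longrightarrow\{0\}
\]
is then literally the original sequence, with the same number of spaces and maps. You even compute $\dim(V_{n-1})=\dim(W)$ without noticing this says the two spaces coincide. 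So the induction hypothesis cannot be invoked: you are assuming the case $n$ to prove the case $n$.

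The fix is easy but must be done one node earlier. Set $W'=\kr(A_{n-2})=\ra(A_{n-3})\subseteq V_{n-2}$; then
\[
\{0\}=V_0\xrightarrow{A_0}V_1\to\cdots\to V_{n-3}\xrightarrow{A_{n-3}}W'\longrightarrow\{0\}
\]
is genuinely shorter, and the induction hypothesis gives $\sum_{j=0}^{n-3}(-1)^j\dim(V_j)+(-1)^{n-2}\dim(W')=0$. Rank--nullity on $A_{n-2}$ yields $\dim(V_{n-2})=\dim(W')+\dim(\ra(A_{n-2}))=\dim(W')+\dim(V_{n-1})$, and substituting this into the full alternating sum makes everything cancel. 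That is the argument your exploratory bookkeeping was reaching for, but with the cut placed where it actually shortens the sequence.

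For comparison, the paper avoids induction entirely: it writes $\dim(V_j)=\dim(\kr(A_j))+\dim(\kr(A_{j+1}))$ for every $j$ (rank--nullity plus exactness), so the alternating sum $\sum_j(-1)^j\dim(V_j)$ telescopes directly to $\pm\dim(\kr(A_0))=0$. This is a one-line global argument, whereas your inductive approach, once repaired, processes the same telescoping one step at a time.
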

\begin{proof}
For each $j$, decompose $V_j$ as $V_j=\kr(A_j)\oplus Y_j$, where $Y_j$ is an orthogonal complement of $\kr(A_j)$. The exactness of the sequence implies that $A_j:Y_j\longrightarrow\kr(A_{j+1})=\ra(A_j)$ is an isomorphism. Therefore, $\dim(Y_j)=\dim(\kr(A_{j+1}))$ for each $j=0,1,\cdots,n-1$. Hence $\dim(V_j)=\dim(\kr(A_j))+\dim(\kr(A_{j+1})).$ Further, $\dim(\kr(A_0))=0$ and $\dim(V_{n-1})=\dim(\kr(A_{n-1}))$. Hence the result follows.
\end{proof}
\begin{theorem}\label{FT2}
Let $\vr, \ur$ and $W_\quat^R$ be right quaternionic Hilbert spaces. If $A_1\in\B(\vr,\ur)$ and $A_2\in\B(\ur,W_\quat^R)$ are two Fredholm operators, then $A_2A_1\in\B(\vr, W_\quat^R)$ is also a Fredholm operator, and it satisfies $\ind(A_2A_1)=\ind(A_1)+\ind(A_2)$.
\end{theorem}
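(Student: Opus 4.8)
The plan is to prove the statement in two steps. First I would show that $A_2A_1$ is Fredholm using Atkinson's theorem, Theorem \ref{FT3}. Then, to establish the index identity, I would fit the kernels and cokernels of $A_1$, $A_2$ and $A_2A_1$ into an exact sequence of finite dimensional right quaternionic vector spaces and apply the alternating-dimension Lemma \ref{FL1}. The non-commutativity of $\quat$ plays no essential role: all cokernels are quotients of right quaternionic Hilbert spaces by right linear (closed, by Proposition \ref{FP1}) subspaces, and every map in sight is built from restrictions, inclusions and quotients of the given right linear operators.

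\emph{Step 1: $A_2A_1$ is Fredholm.} By Theorem \ref{FT3} applied to $A_1$ there exist $P_1,Q_1\in\B(\ur,\vr)$ and compact operators $K_1$ on $\vr$, $K_1'$ on $\ur$ with $P_1A_1=\mathbb{I}_{\vr}+K_1$ and $A_1Q_1=\mathbb{I}_{\ur}+K_1'$; applied to $A_2$ it gives $P_2,Q_2\in\B(W_\quat^R,\ur)$ and compact operators $L_1$ on $\ur$, $L_2$ on $W_\quat^R$ with $P_2A_2=\mathbb{I}_{\ur}+L_1$ and $A_2Q_2=\mathbb{I}_{W_\quat^R}+L_2$. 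Then $P_1P_2\in\B(W_\quat^R,\vr)$ and $Q_1Q_2\in\B(W_\quat^R,\vr)$ satisfy
$$(P_1P_2)(A_2A_1)=P_1(\mathbb{I}_{\ur}+L_1)A_1=\mathbb{I}_{\vr}+\bigl(K_1+P_1L_1A_1\bigr),$$
$$(A_2A_1)(Q_1Q_2)=A_2(\mathbb{I}_{\ur}+K_1')Q_2=\mathbb{I}_{W_\quat^R}+\bigl(L_2+A_2K_1'Q_2\bigr).$$
The operators $K_1+P_1L_1A_1$ and $L_2+A_2K_1'Q_2$ are compact by Proposition \ref{com1} (a product of a compact operator with bounded operators is compact, and a finite sum of compact operators is compact), so Theorem \ref{FT3} yields that $A_2A_1$ is Fredholm; by Proposition \ref{FP1} its range is closed, and in particular $\text{ker}(A_2A_1)$ and $\text{coker}(A_2A_1)=W_\quat^R/\ra(A_2A_1)$ are finite dimensional.

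\emph{Step 2: the index identity.} Since $\text{ker}(A_1)\subseteq\text{ker}(A_2A_1)$ and $A_1$ maps $\text{ker}(A_2A_1)$ into $\text{ker}(A_2)$, I would consider the sequence
$$\{0\}\longrightarrow\text{ker}(A_1)\overset{\iota}{\longrightarrow}\text{ker}(A_2A_1)\overset{A_1}{\longrightarrow}\text{ker}(A_2)\overset{\beta}{\longrightarrow}\text{coker}(A_1)\overset{\gamma}{\longrightarrow}\text{coker}(A_2A_1)\overset{\delta}{\longrightarrow}\text{coker}(A_2)\longrightarrow\{0\},$$
where $\iota$ is the inclusion, $\beta(\psi)=\psi+\ra(A_1)$, $\gamma\bigl(\psi+\ra(A_1)\bigr)=A_2\psi+\ra(A_2A_1)$ and $\delta\bigl(\omega+\ra(A_2A_1)\bigr)=\omega+\ra(A_2)$. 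First I would check that $\gamma$ and $\delta$ are well defined and right linear (using $A_2\,\ra(A_1)\subseteq\ra(A_2A_1)\subseteq\ra(A_2)$), and then verify exactness node by node by a direct chase: at $\text{ker}(A_2A_1)$ one has $\text{ker}\bigl(A_1|_{\text{ker}(A_2A_1)}\bigr)=\text{ker}(A_1)$; at $\text{ker}(A_2)$ both the image of $A_1$ and $\text{ker}(\beta)$ equal $\text{ker}(A_2)\cap\ra(A_1)$; at $\text{coker}(A_1)$ both $\text{ran}(\beta)$ and $\text{ker}(\gamma)$ equal $\{\psi+\ra(A_1):\psi\in\text{ker}(A_2)\}$; at $\text{coker}(A_2A_1)$ both $\text{ran}(\gamma)$ and $\text{ker}(\delta)$ equal $\ra(A_2)/\ra(A_2A_1)$; and $\delta$ is plainly surjective. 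By Step 1 and the Fredholmness of $A_1,A_2$ all six spaces are finite dimensional, so Lemma \ref{FL1} forces the alternating sum of their dimensions to vanish; rearranging this gives
$$\dim\text{ker}(A_2A_1)-\dim\text{coker}(A_2A_1)=\bigl(\dim\text{ker}(A_1)-\dim\text{coker}(A_1)\bigr)+\bigl(\dim\text{ker}(A_2)-\dim\text{coker}(A_2)\bigr),$$
that is, $\ind(A_2A_1)=\ind(A_1)+\ind(A_2)$.

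The main obstacle I anticipate is the bookkeeping in Step 2 rather than any genuinely new difficulty: one must make sure that $\gamma$ and $\delta$ descend to well-defined right linear maps on the quotient spaces, that the six exactness statements are checked carefully (each reduces to an elementary identity between images and kernels as indicated above), and that every object is a bona fide right quaternionic vector space so that Lemma \ref{FL1} applies. Everything else is a formal consequence of Atkinson's theorem, the closedness of the ranges (Proposition \ref{FP1}) and the compact-operator calculus of Proposition \ref{com1}.
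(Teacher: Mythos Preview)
Your proof is correct and, for the index identity, takes essentially the same route as the paper: the exact sequence you write down is exactly the paper's sequence (with $\beta,\gamma,\delta$ playing the roles of the paper's $Q$, the induced $A_2$, and $E$), and the conclusion via Lemma~\ref{FL1} is identical. The only difference is in Step~1: the paper proves $A_2A_1$ is Fredholm by direct dimension estimates on $\ker(A_2A_1)$ and $\cokr(A_2A_1)$, whereas you invoke Theorem~\ref{FT3} (Atkinson) to manufacture left and right pseudo-inverses for the composite. Your route is a bit slicker since Theorem~\ref{FT3} is already available at this point; the paper's is more self-contained and yields the explicit bounds $\dim\ker(A_2A_1)\le\dim\ker(A_1)+\dim\ker(A_2)$ and $\dim\cokr(A_2A_1)\le\dim\cokr(A_1)+\dim\cokr(A_2)$ as a by-product.
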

\begin{proof}
Suppose that $A_1$ and $A_2$ be Fredholm operators. Clearly, $\kr(A_1)\subseteq\kr(A_2A_1)$ and $A_1:\kr(A_2A_1)\longrightarrow\kr(A_2)$. Therefore, there is an isomorphism $$S:\kr(A_2A_1)/\kr(A_1)\longrightarrow B\subseteq\kr(A_2),$$ where $B$ is a subspace of $\kr(A_2)$. Hence, $\dim(\kr(A_2A_1)/\kr(A_1))\leq\dim(\kr(A_2))<\infty$, and thus
$$\dim(\kr(A_2A_1))\leq\dim(\kr(A_1))+\dim(\kr(A_2))<\infty.$$
Since $\ra(A_2A_1)\subseteq\ra(A_2)\subseteq W_\quat^R$, we have
$$\left(W_\quat^R/\ra(A_2A_1)\right)/\left(\ra(A_2)/\ra(A_2A_1)\right)=W_\quat^R/\ra(A_2).$$
Hence,
\begin{equation}\label{FE3}
\dim(\cokr(A_2A_1))=\dim(\ra(A_2)/\ra(A_2A_1))+\dim(\cokr(A_2)).
\end{equation}
Further, $A_2:\ur/\ra(A_1)\longrightarrow \ra(A_2)/\ra(A_2A_1)$ is surjective, and therefore
\begin{equation}\label{FE4} \dim(\ra(A_2)/\ra(A_2A_1))\leq\dim(\cokr(A_1))<\infty.
\end{equation}
From equations \ref{FE3},\ref{FE4}, we have 
$$\dim(\cokr(A_2A_1))\leq\dim(\cokr(A_1))+\dim(\cokr(A_2))<\infty.$$
Thus $A_2A_1$ is a Fredholm operator.\\
Let $\mathfrak{i}:\kr(A_1)\longrightarrow\kr(A_2A_1)$ be the inclusion map, $Q:\kr(A_2)\longrightarrow\ur/\ra(A_1)$ be the quotient map, and $E:W_\quat^R/\ra(A_2A_1)\longrightarrow W_\quat^R/\ra(A_2)$ maps equivalence classes modulo $\ra(A_2A_1)$ to equivalence classes modulo $\ra(A_2)$. Consider the sequence
\begin{eqnarray*}
\{0\}&\xrightarrow{O_1}\kr(A_1)&\xrightarrow{\mathfrak{i}}\kr(A_2A_1)\xrightarrow{A_1}\kr(A_2)\xrightarrow{Q}\ur/\ra(A_1)\rightarrow\\
& &\quad\quad\quad\xrightarrow{A_2}W_\quat^R/\ra(A_2A_1)\xrightarrow{E} W_\quat^R/\ra(A_2)\xrightarrow{O_2}\{0\}.
\end{eqnarray*}
Since $\ra(O_1)=\kr(\mathfrak{i})=\{0\}$, $\ra(\mathfrak{i})=\kr(A_1)$, $\ra(A_1)=\kr(Q)$, $\ra(Q)=\kr(A_2)$ and $\ra(A_2)=\kr(E)$, the above sequence is exact. Therefore, by lemma \ref{FL1}, we have
\begin{eqnarray*}
0&=&\dim(\{0\})-\dim(\kr(A_1))+\dim(\kr(A_2A_1))-\dim(\kr(A_2))\\
& &\quad+\dim\left(\ur/\ra(A_1)\right)-\dim\left(W_\quat^R/\ra(A_2A_1)\right)+\dim\left(W_\quat^R/\ra(A_2)\right)\\
&=&[\dim(\kr(A_2A_1))-\dim\left(W_\quat^R/\ra(A_2A_1)\right)]-[\dim(\kr(A_1))-\dim\left(\ur/\ra(A_1)\right)]\\
& &\quad\quad-[\dim(\kr(A_2))-\dim\left(W_\quat^R/\ra(A_2)\right)]\\
&=&\ind(A_2A_1)-\ind(A_1)-\ind(A_2).
\end{eqnarray*}
Hence the result follows.
\end{proof}
\begin{corollary}\label{FC1}
Let $A\in\B(\vr,\ur)$ and $B\in\B(\ur,\vr)$ such that $AB=F$ and $BA=G$ are Fredholm operators. Then $A$ and $B$ are Fredholm operators and $\ind(AB)=\ind(A)+\ind(B)$.
\end{corollary}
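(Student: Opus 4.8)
The plan is to deduce the statement from the composition theorem for Fredholm operators, Theorem \ref{FT2}, once we know that $A$ and $B$ are individually Fredholm. So the argument splits into two parts: first showing that $A$ and $B$ are Fredholm, which is where all the content lies, and then a one-line appeal to Theorem \ref{FT2} for the index identity.

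For the first part, I would work directly from Definition \ref{FD1}, which only requires $\kr$ and $\cokr$ to be finite dimensional (closedness of the range then comes for free from Proposition \ref{FP1}, so we need not worry about it). To bound $\dim(\kr(A))$, I use the inclusion $\kr(A)\subseteq\kr(BA)=\kr(G)$ together with the hypothesis that $G$ is Fredholm, giving $\dim(\kr(A))\le\dim(\kr(G))<\infty$. To bound $\dim(\cokr(A))$, I use the inclusion $\ra(F)=\ra(AB)\subseteq\ra(A)\subseteq\ur$: the canonical map $\ur/\ra(F)\longrightarrow\ur/\ra(A)$ is a surjection of quaternionic vector spaces, hence $\dim(\cokr(A))\le\dim(\cokr(F))<\infty$. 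The same reasoning with the roles of $A,B$ and $F,G$ interchanged — namely $\kr(B)\subseteq\kr(AB)=\kr(F)$ and $\ra(G)=\ra(BA)\subseteq\ra(B)\subseteq\vr$ — shows that $B$ is Fredholm.

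For the index, since $A$ and $B$ are now both Fredholm, Theorem \ref{FT2} applies to the composite $AB\in\B(\ur)$ and yields $\ind(AB)=\ind(A)+\ind(B)$ directly; this also re-confirms that $AB$ is Fredholm, consistent with the hypothesis that $AB=F$.

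The main (and essentially only) obstacle is bookkeeping: keeping straight which of the two composites, $AB$ or $BA$, controls the kernel of $A$ and which controls its cokernel, and likewise for $B$. There is no analytic difficulty, since everything reduces to finite-dimensional linear algebra over $\quat$ together with the already-established composition theorem; in particular the non-commutativity of $\quat$ plays no role here because no quaternionic scalar multiple of an operator appears.
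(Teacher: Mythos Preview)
Your proposal is correct and follows essentially the same approach as the paper: use the inclusions $\kr(A)\subseteq\kr(G)$ and $\ra(F)\subseteq\ra(A)$ to bound $\dim(\kr(A))$ and $\dim(\cokr(A))$, argue symmetrically for $B$, and then invoke Theorem \ref{FT2} for the index identity. The paper's proof is just a slightly terser version of what you wrote.
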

\begin{proof}
Since $\ra(F)\subseteq\ra(A)$ and $\kr(A)\subseteq\kr(G)$, we have $\dim(\kr(A))<\infty$ and $\dim(\ur/\ra(A))\leq\dim(\ur/\ra(F))<\infty$. Hence $A$ is Fredholm. Similarly $B$ is also Fredholm. The index equality follows from theorem \ref{FT2}.
\end{proof}
\begin{lemma}\label{FL2}
Let $F\in\B(\vr)$ be a finite rank operator, then $\ind(\Iop+F)=0.$
\end{lemma}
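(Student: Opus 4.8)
The plan is to exhibit a single finite-dimensional subspace $M$ of $\vr$ that, together with its orthogonal complement, is invariant under $\Iop+F$ and on whose complement $\Iop+F$ acts as the identity; the index can then be read off from elementary linear algebra on $M$.

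First I would check that the index is well defined: $F$ is compact by Theorem \ref{PD22}(a) and $\Iop$ is bijective, so $\Iop+F$ is Fredholm by Theorem \ref{FT1}. By Theorem \ref{PD22}(b) the operator $F^\dagger$ is again finite rank, so $\ra(F)$ and $\ra(F^\dagger)$ are finite-dimensional; put $M:=\ra(F)+\ra(F^\dagger)$, which is finite-dimensional, hence closed by Proposition \ref{ort}(c). Since $F(\vr)=\ra(F)\subseteq M$ we get $(\Iop+F)(M)\subseteq M+F(M)\subseteq M$, so $M$ is invariant under $\Iop+F$. On the other hand Proposition \ref{IP30}(b) gives $M^\perp\subseteq\ra(F^\dagger)^\perp=\kr(F)$, so $(\Iop+F)\phi=\phi$ for every $\phi\in M^\perp$; in particular $(\Iop+F)(M^\perp)=M^\perp$. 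By Proposition \ref{ort}(b), $\vr=M\oplus M^\perp$ with both summands $(\Iop+F)$-invariant.

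Next I would transfer the kernel and cokernel of $\Iop+F$ onto $M$, writing $T:=(\Iop+F)|_M:M\longrightarrow M$. If $(\Iop+F)\phi=0$, decompose $\phi=\phi_1+\phi_2$ with $\phi_1\in M$ and $\phi_2\in M^\perp$; then $T\phi_1+\phi_2=0$ with $T\phi_1\in M$ and $\phi_2\in M^\perp$, forcing $\phi_2=0$ and $T\phi_1=0$, so $\kr(\Iop+F)=\kr(T)$. Likewise $\ra(\Iop+F)=T(M)\oplus M^\perp$ — a genuine direct sum because $T(M)\subseteq M$ and $M\cap M^\perp=\{0\}$ — so the natural map $M/T(M)\longrightarrow\vr/\ra(\Iop+F)$ is an isomorphism and $\cokr(\Iop+F)\cong M/T(M)$. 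Hence $\ind(\Iop+F)=\dim\kr(T)-\dim(M/T(M))$.

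Finally, $T$ is a right linear endomorphism of the finite-dimensional right quaternionic space $M$, so the sequence $\{0\}\to\kr(T)\hookrightarrow M\xrightarrow{T} M\to M/T(M)\to\{0\}$ is exact with all terms finite-dimensional; Lemma \ref{FL1} then yields $-\dim\kr(T)+\dim M-\dim M+\dim(M/T(M))=0$, i.e. $\dim\kr(T)=\dim(M/T(M))$, and therefore $\ind(\Iop+F)=0$. I expect the only step needing care to be the cokernel identification — ensuring the $M^\perp$-part of $\ra(\Iop+F)$ is precisely $M^\perp$ so that nothing is lost in passing to the quotient — which is exactly where the equality $(\Iop+F)(M^\perp)=M^\perp$ enters.
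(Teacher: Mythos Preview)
Your proof is correct and follows essentially the same route as the paper: the paper's subspace $L=\ra(F)+\kr(F)^\perp$ coincides with your $M=\ra(F)+\ra(F^\dagger)$ (since $\kr(F)^\perp=\ra(F^\dagger)$ for finite rank $F$), and both arguments reduce the index computation to the restriction on this finite-dimensional invariant subspace. The only cosmetic differences are that you invoke Lemma~\ref{FL1} for the rank--nullity step where the paper states it directly, and you spell out the kernel/cokernel identifications that the paper compresses into the additivity formula $\ind(\Iop+F)=\ind((\Iop+F)|_L)+\ind((\Iop+F)|_{L^\perp})$.
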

\begin{proof}
Suppose that $F\in\B(\vr)$ be a finite rank operator. Then, $\dim(\ra(F))<\infty$. Also by theorem \ref{PD22}, $\kr(F)^\perp=\ra(F^\dagger)$ is also a finite dimensional subspace. Define $L:=\ra(F)+\kr(F)^\perp$ with $\dim(L)<\infty$, and hence $L$ is closed. Thus $\vr=L\oplus L^\perp$. Further,
\begin{eqnarray*}
(\Iop+F)L&\subseteq&L+FL\subseteq L,\\
(\Iop+F)\rvert_{L^\perp}&=&\mathbb{I}_{L^\perp}.
\end{eqnarray*}
Hence $L$ and $L^\perp$ are invariant under $\Iop+F$, and therefore,
$$\ind(\Iop+F)=\ind((\Iop+F)\rvert_L)+\ind((\Iop+F)\rvert_{L^\perp}).$$
Since $\kr((\Iop+F)\rvert_{L^\perp})=\{0\}$ and $L^\perp/\ra((\Iop+F)\rvert_{L^\perp})\cong\{0\}$, $\ind((\Iop+F)\rvert_{L^\perp})=0.$ Further, since $\dim(L)<\infty$, $\dim(L)=\dim(\kr((\Iop+F)\rvert_{L}))+\dim(\ra((\Iop+F)\rvert_{L}))$, and therefore $\text{codim}((\Iop+F)\rvert_{L}))=\dim(\kr((\Iop+F)\rvert_{L}))$. Hence, $\ind(\Iop+F)\rvert_{L})=0$, which concludes the proof.
\end{proof}
\begin{theorem}\label{FT4}
Let $A\in\B(\vr,\ur)$ be a Fredholm operator, then for any compact operator $K\in\B(\vr,\ur)$, $A+K$ is a Fredholm operator and $\ind(A+K)=\ind(A)$.
\end{theorem}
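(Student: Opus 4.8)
The plan is to reduce the whole statement to one fact — that $\mathbb{I}+C$ is Fredholm of index $0$ for every compact operator $C$ on a right quaternionic Hilbert space — and then to combine it with a parametrix for $A$ (Proposition \ref{FP3}) and the multiplicativity of the index (Theorem \ref{FT2}, Corollary \ref{FC1}). This avoids any appeal to a ``stability of the index under small-norm perturbations'' statement, which is not yet available at this point.

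\textbf{Step 1 (key lemma): if $C$ is compact on a right quaternionic Hilbert space $X$, then $\mathbb{I}_X+C$ is Fredholm and $\ind(\mathbb{I}_X+C)=0$.} By Theorem \ref{comeq} choose a finite rank operator $F$ with $\|C-F\|<1$. Then $B:=\mathbb{I}_X+(C-F)$ is invertible with bounded inverse by Proposition \ref{PP3}; being invertible it is Fredholm with $\ind(B)=\dim\kr(B)-\dim\cokr(B)=0$ (Remark \ref{FR4}(d)). Since $B^{-1}$ is bounded and $F$ has finite-dimensional range, $B^{-1}F$ is a finite rank operator, so $\mathbb{I}_X+B^{-1}F$ is Fredholm of index $0$ by Lemma \ref{FL2}. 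Now $\mathbb{I}_X+C=B+F=B(\mathbb{I}_X+B^{-1}F)$, and Theorem \ref{FT2} yields $\ind(\mathbb{I}_X+C)=\ind(B)+\ind(\mathbb{I}_X+B^{-1}F)=0$. The factorization through the invertible operator $B$ is the crux; I expect Step 1 to be the only real obstacle, since the naive approximation-plus-Lemma \ref{FL2} argument does not control the index.

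\textbf{Step 2 (the parametrix and its index).} By Proposition \ref{FP3} there is $S\in\B(\ur,\vr)$ with $AS-\Iopu=F_2$ and $SA-\Iop=F_1$ finite rank, hence compact. Then $AS=\Iopu+F_2$ and $SA=\Iop+F_1$ are Fredholm, so Corollary \ref{FC1} applied to $A$ and $S$ shows $S$ is Fredholm and $\ind(A)+\ind(S)=\ind(AS)=\ind(\Iopu+F_2)=0$ by Lemma \ref{FL2}; thus $\ind(S)=-\ind(A)$.

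\textbf{Step 3 (conclusion).} Write $(A+K)S=\Iopu+(F_2+KS)$ and $S(A+K)=\Iop+(F_1+SK)$. By Proposition \ref{com1} the operators $F_2+KS$ and $F_1+SK$ are compact, so by Step 1 both products are Fredholm, with $\ind((A+K)S)=0$. Corollary \ref{FC1} applied to $A+K$ and $S$ then gives that $A+K$ is Fredholm and $\ind(A+K)+\ind(S)=\ind((A+K)S)=0$. Combining with Step 2, $\ind(A+K)=-\ind(S)=\ind(A)$, which completes the proof.
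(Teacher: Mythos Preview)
Your proof is correct and follows essentially the same strategy as the paper's: obtain a parametrix $S$ from Proposition~\ref{FP3}, use Lemma~\ref{FL2} and the multiplicativity of the index (Theorem~\ref{FT2}, Corollary~\ref{FC1}) to get $\ind(S)=-\ind(A)$, and then compute $\ind(S(A+K))$ to conclude.

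The one genuine difference is how the last step is handled. The paper asserts that $S(A+K)=\Iop+F_3$ for some \emph{finite rank} $F_3$ and then invokes Lemma~\ref{FL2}; but in fact $S(A+K)=\Iop+F_1+SK$ with $SK$ merely compact, so that citation does not literally apply. Your Step~1 closes exactly this gap: by writing $\Iop+C=B(\Iop+B^{-1}F)$ with $B=\Iop+(C-F)$ invertible (Proposition~\ref{PP3}) and $F$ finite rank (Theorem~\ref{comeq}), you reduce the index computation for $\Iop+\text{compact}$ to Lemma~\ref{FL2} plus the trivial fact that an invertible operator has index~$0$. Everything you use (Theorem~\ref{comeq}, Proposition~\ref{PP3}, Lemma~\ref{FL2}, Theorem~\ref{FT2}, Corollary~\ref{FC1}, Proposition~\ref{com1}) is available prior to this theorem, so the argument is self-contained and in fact slightly more rigorous than the paper's at the decisive point.
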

\begin{proof}
Let $A\in\B(\vr,\ur)$ be a Fredholm operator, then by theorem \ref{FT3}, there exists $S_1,S_2\in\B(\ur,\vr)$ and compact operators $K_1, K_2$ on $\vr$, $\ur$ respectively such that $S_1A=\Iop+K_1$ and $AS_2=\mathbb{I}_{\ur}+K_2$. Hence we have
\begin{eqnarray*}
S_1(A+K)&=&S_1A+S_1K=\Iop+K_1+S_1K=\Iop+K_1'\\
(A+K)S_2&=&AS_2+KS_2=\mathbb{I}_{\ur}+K_2+KS_2=\mathbb{I}_{\ur}+K_2',
\end{eqnarray*}
where, by proposition \ref{com1}, $K_1'=K_1+S_1K$ and $K_2'=K_2+KS_2$ are compact operators. Therefore by theorem \ref{FT3}, $A+K$ is a Fredholm operator.\\
Now by proposition \ref{FP3}, there exists $S\in\B(\ur,\vr)$ and finite rank operators $F_1$ and $F_2$ respectively on $\vr$ and $\ur$ such that
$$SA=\Iop+F_1\quad\text{and}\quad AS=\mathbb{I}_{\ur}+F_2.$$
Hence by proposition \ref{PD22} and theorem \ref{FT1}, $SA$ and $AS$ are Fredholm operators. Thus by corollary \ref{FC1}, $S$ is a Fredholm operator. Hence by lemma \ref{FL2} and theorem \ref{FT2},
\begin{equation}\label{FE5}
0=\ind(SA)=\ind(S)+\ind(A).
\end{equation}
Since $S$ and $A+K$ are Fredholm operators, by theorem \ref{FT2}, $S(A+K)$ is a Fredholm operator. Therefore by theorem \ref{FP3}, there exists a finite rank operator $F_3$ on $\vr$ such that $S(A+K)=\Iop+F_3.$ Thus by lemma \ref{FL2} and theorem \ref{FT2}, we have
\begin{equation}\label{FE6} 
0=\ind(S(A+K))=\ind(S)+\ind(A+K).
\end{equation}
From equations \ref{FE5} and \ref{FE6}, we have $\ind(A+K)=\ind(A)$.
\end{proof}
\begin{corollary}
Let $K\in\B(\vr)$ be compact then $\Iop+K$ is Fredholm and $\ind(I+K)=0.$
\end{corollary}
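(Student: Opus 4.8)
The plan is to obtain this as an immediate specialization of Theorem~\ref{FT4}, taking the Fredholm operator there to be the identity. First I would note that $\Iop\in\B(\vr)$ is bijective, so by Remark~\ref{FR4}(d) it is a Fredholm operator; equivalently, $\kr(\Iop)=\{0\}$ and $\ra(\Iop)=\vr$, so both $\kr(\Iop)$ and $\cokr(\Iop)=\vr/\ra(\Iop)$ are (trivially) finite dimensional. Consequently
$$\ind(\Iop)=\dim(\kr(\Iop))-\dim(\cokr(\Iop))=0-0=0.$$

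Next, regarding the compact operator $K\in\B(\vr)$ as a compact element of $\B(\vr,\vr)$, I would apply Theorem~\ref{FT4} to the pair $(A,K)=(\Iop,K)$. That theorem gives at once that $\Iop+K$ is a Fredholm operator and that $\ind(\Iop+K)=\ind(\Iop)$, which by the previous paragraph equals $0$. This proves the corollary.

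There is essentially no obstacle here: the entire content is already carried by Theorem~\ref{FT4} (invariance of the Fredholm index under compact perturbations), and the only thing one needs in addition is the elementary observation that the identity operator is Fredholm with index zero. So the "hard part", such as it is, was done upstream in establishing Theorem~\ref{FT1} and Theorem~\ref{FT4}; the corollary itself is a one-line deduction.
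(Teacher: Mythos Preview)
Your proof is correct and takes essentially the same approach as the paper: observe that $\Iop$ is Fredholm with $\ind(\Iop)=0$, then apply Theorem~\ref{FT4} with $A=\Iop$ to conclude that $\Iop+K$ is Fredholm with $\ind(\Iop+K)=\ind(\Iop)=0$.
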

\begin{proof} Let $K\in\B_0(\vr).$
Clearly $\Iop$ is Fredholm and $\ind(\Iop)=0$. Therefore, by theorem \ref{FT4}, $\Iop+K$ is Fredholm and $\ind(\Iop+K)=\ind(\Iop)=0$.
\end{proof}
\begin{corollary}\label{CI1} Every invertible operator $A\in\B(\vr)$ is Fredholm and $\ind(A)=0$.
\end{corollary}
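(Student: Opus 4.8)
The plan is to read off both conclusions directly from the definitions, since invertibility trivializes the kernel and the cokernel. First I would note that an invertible operator $A\in\B(\vr)$ is injective, so $\ker(A)=\{0\}$ and in particular $\dim(\ker(A))=0<\infty$; by Theorem \ref{open}(b) (or Proposition \ref{NP2}(a)) it is also surjective with $\ran(A)=\vr$, whence $\ran(A)$ is closed and $\coker(A)=\vr/\ran(A)=\{0\}$, so $\dim(\coker(A))=0<\infty$. By Definition \ref{FD1} this already makes $A$ a Fredholm operator. Alternatively, one may simply appeal to Remark \ref{FR4}(d): taking $S_1=S_2=A^{-1}$ and $K_1=K_2=0$ in Theorem \ref{FT3} exhibits $A$ as Fredholm.

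It then remains only to compute the index. By Definition \ref{FD2},
$$\ind(A)=\dim(\ker(A))-\dim(\coker(A))=0-0=0.$$
There is essentially no obstacle here; the only mild point worth noting is that $\coker(A)$ in the sense of Definition \ref{PD4} is well behaved, which is immediate because $\ran(A)=\vr$ is closed. One could instead try to obtain $\ind(A)=0$ from multiplicativity of the index (Theorem \ref{FT2}) applied to $A^{-1}A=AA^{-1}=\Iop$ together with $\ind(\Iop)=0$, but that route only yields $\ind(A)+\ind(A^{-1})=0$ and still requires the direct observation above that invertible operators are Fredholm with trivial kernel and cokernel, so the direct computation is the cleaner argument.
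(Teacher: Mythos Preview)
Your argument is correct and is actually the cleanest route: invertibility immediately gives $\ker(A)=\{0\}$ and $\operatorname{ran}(A)=V_{\mathbb H}^R$, so Definition~\ref{FD1} and Definition~\ref{FD2} apply directly with no further machinery. The paper takes a slightly different path: it invokes Corollary~\ref{FC1} (applied to $AA^{-1}=A^{-1}A=\Iop$) to conclude that $A$ is Fredholm, and then computes the index via Remark~\ref{FR1} by observing that $A^\dagger$ is also invertible, so $\ker(A)=\ker(A^\dagger)=\{0\}$. Your direct approach avoids the detour through Corollary~\ref{FC1} and the adjoint; the paper's route has the minor advantage of illustrating how the product-to-factor Fredholm result (Corollary~\ref{FC1}) specializes, but for this particular statement your argument is more self-contained. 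Incidentally, the alternative you dismiss at the end---using $A^{-1}A=\Iop$ to get Fredholmness---is essentially what the paper does for the first half of its proof, though for the index it too falls back on the triviality of the kernels rather than on multiplicativity.
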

\begin{proof}
Let $A\in\B(\vr)$ is invertible. Then $AA^{-1}=A^{-1}A=\Iop$ is Fredholm. Therefore, by corollary \ref{FC1}, $A$ and $A^{-1}$ are Fredholm operators. Since $A$ is invertible so is $A^\dagger$. Therefore, $\kr(A)=\kr(A^\dagger)=\{0\}$. Hence $\ind(A)=0$.
\end{proof}
\begin{corollary}\label{CN}
Let $n$ be a non-negative integer. If $A\in\FF(\vr)$ then $A^n\in\FF(\vr)$ and $\ind(A^n)=n~\ind(A)$.
\end{corollary}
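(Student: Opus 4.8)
The plan is to proceed by induction on $n$, with Theorem \ref{FT2} doing all the work. For the base case $n=0$ one interprets $A^{0}=\Iop$: the identity operator on $\vr$ is Fredholm, since $\kr(\Iop)=\{0\}$ and $\cokr(\Iop)\cong\{0\}$ are both finite dimensional, and $\ind(\Iop)=0=0\cdot\ind(A)$, which is exactly the claimed formula for $n=0$. (Alternatively, this is Corollary \ref{CI1}.) The case $n=1$ is nothing but the hypothesis $A\in\FF(\vr)$.

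For the inductive step, suppose that for some $n\geq 1$ we already know $A^{n}\in\FF(\vr)$ and $\ind(A^{n})=n\,\ind(A)$. Write $A^{n+1}=A^{n}A$, which exhibits $A^{n+1}$ as a composition of the two Fredholm operators $A^{n}\in\B(\vr)$ and $A\in\B(\vr)$ — here the three right quaternionic Hilbert spaces appearing in Theorem \ref{FT2} are all taken to be $\vr$. By Theorem \ref{FT2}, $A^{n+1}=A^{n}A\in\FF(\vr)$ and
$$\ind(A^{n+1})=\ind(A^{n}A)=\ind(A^{n})+\ind(A)=n\,\ind(A)+\ind(A)=(n+1)\,\ind(A).$$
This closes the induction and establishes both assertions of the corollary.

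I do not anticipate a genuine obstacle here. The only two points worth a remark are the convention $A^{0}=\Iop$ needed to make the statement meaningful at $n=0$, and the observation that Theorem \ref{FT2}, although stated for three a priori distinct right quaternionic Hilbert spaces $\vr,\ur,W_\quat^R$, applies verbatim in the special case $\vr=\ur=W_\quat^R$ that is relevant for iterating a single operator.
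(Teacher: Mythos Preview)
Your proof is correct and follows essentially the same approach as the paper: the base cases $n=0,1$ are trivial, and the inductive step is a direct application of Theorem~\ref{FT2} to the composition $A^{n+1}=A^nA$. Your write-up is simply a more detailed version of the paper's terse induction sketch.
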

\begin{proof}
The result is trivial for $n=0,1$. $n=2$ follows from theorem \ref{FT2}, and by induction we have the result.
\end{proof}
\begin{theorem}\label{FT5}
Let $A\in\B(\vr,\ur)$ be a Fredholm operator. Then there exists a constant $c>0$ such that for every operator $S\in\B(\vr,\ur)$ with $\|S\|<c$, $A+S$ is a Fredholm operator and $\ind(A+S)=\ind(A)$.
\end{theorem}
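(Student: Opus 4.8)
The plan is to reduce the statement to the pseudo-inverse characterization of Fredholm operators (Proposition \ref{FP3}), the Neumann series (Proposition \ref{PP3}), the stability of Fredholmness and index under compact perturbation (Theorem \ref{FT4}, Lemma \ref{FL2}), and the multiplicativity of the index (Theorem \ref{FT2}, Corollary \ref{FC1}). First I would invoke Proposition \ref{FP3} to fix $S_0\in\B(\ur,\vr)$ together with finite rank operators $F_1$ on $\vr$ and $F_2$ on $\ur$ such that $S_0A=\Iop+F_1$ and $AS_0=\mathbb{I}_{\ur}+F_2$. Put $c:=1/(1+\|S_0\|)>0$. Then for every $S\in\B(\vr,\ur)$ with $\|S\|<c$ we have $\|S_0S\|\le\|S_0\|\,\|S\|<1$ and $\|SS_0\|<1$, so by Proposition \ref{PP3} the operators $\Iop+S_0S\in\B(\vr)$ and $\mathbb{I}_{\ur}+SS_0\in\B(\ur)$ are invertible, hence Fredholm of index $0$ by Corollary \ref{CI1}.

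Next I would compute $S_0(A+S)=\Iop+F_1+S_0S=(\Iop+S_0S)+F_1$. Since $F_1$ is finite rank it is compact (Theorem \ref{PD22}), so by Theorem \ref{FT4} the operator $S_0(A+S)$ is Fredholm and $\ind(S_0(A+S))=\ind(\Iop+S_0S)=0$. Symmetrically, $(A+S)S_0=(\mathbb{I}_{\ur}+SS_0)+F_2$ is Fredholm. Now applying Corollary \ref{FC1} to the pair $(A+S,S_0)$ — both $(A+S)S_0$ and $S_0(A+S)$ being Fredholm — we conclude that $A+S$ and $S_0$ are Fredholm operators.

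It remains to identify $\ind(A+S)$. By Theorem \ref{FT2}, $0=\ind(S_0(A+S))=\ind(S_0)+\ind(A+S)$, so the task is to compute $\ind(S_0)$. Taking $S=0$ in the above, $S_0A=\Iop+F_1$ is Fredholm of index $0$ by Lemma \ref{FL2}, while Theorem \ref{FT2} gives $\ind(S_0A)=\ind(S_0)+\ind(A)$; hence $\ind(S_0)=-\ind(A)$. Combining, $\ind(A+S)=-\ind(S_0)=\ind(A)$, which is the assertion.

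The only delicate points are bookkeeping ones. One must choose $c$ small enough that both Neumann series converge; the choice $c=1/(1+\|S_0\|)$ also disposes of the degenerate situation $S_0=0$, which forces $\mathbb{I}_{\ur}=-F_2$ and $\Iop=-F_1$ to be finite rank, i.e.\ $\vr$ and $\ur$ finite dimensional, where the result is immediate anyway. One must also keep the invertible factor and the ``identity plus finite rank'' summand on the correct sides of the composition so that Theorem \ref{FT4}, Theorem \ref{FT2} and Corollary \ref{FC1} apply verbatim. Beyond this, no genuinely new idea is needed relative to the complex case, since no quaternionic scalar multiple of an operator enters the argument.
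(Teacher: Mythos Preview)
Your proof is correct and follows essentially the same strategy as the paper: take a pseudo-inverse (the paper uses operators $R_1,R_2$ with $R_1A=\Iop-P_k$ and $AR_2=\mathbb{I}_{\ur}-P_{k^\dagger}$; you use a single $S_0$ from Proposition~\ref{FP3}), control the perturbation via the Neumann series (Proposition~\ref{PP3}), and recover the index from multiplicativity exactly as in Theorem~\ref{FT4}. The only cosmetic difference is that the paper explicitly inverts $\Iop+R_1S$ to manufacture a parametrix for $A+S$ and then invokes Theorem~\ref{FT3}, whereas you note that $S_0(A+S)$ and $(A+S)S_0$ are compact perturbations of invertibles and invoke Corollary~\ref{FC1} instead; your choice $c=1/(1+\|S_0\|)$ also avoids the paper's tacit assumption that $\|R_i\|\neq 0$.
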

\begin{proof}
Let  $A\in\B(\vr,\ur)$ be a Fredholm operator, then by proposition \ref{FP2}, $A^\dagger\in\B(\ur,\vr)$ is Fredholm. Therefore, $\kr(A)$ and $\kr(A^\dagger)$ are finite dimensional subspaces of $\vr$ and $\ur$ respectively. Let $R_1, R_2\in\B(\ur,\vr)$ be operators such that
$R_1A=\Iop-P_k$ and $AR_2=\mathbb{I}_{\ur}-P_{k^\dagger}$, where $P_k:\vr\longrightarrow \kr(A)$ and $P_{k^\dagger}:\ur\longrightarrow\kr(A^\dagger)$ be orthogonal projections. Further $P_k$ and $P_{k^\dagger}$ are finite rank operators hence compact. Now $R_1(A+S)=R_1A+R_1S=\Iop+R_1S-P_k$. For $\|S\|<\min\{\|R_1\|^{-1},\|R_2\|^{-1}\}$, we have $\|R_1S\|<1$. Therefore by proposition \ref{PP3}, $\Iop+R_1S$ is invertible and by the open mapping theorem $(\Iop+R_1S)^{-1}\in\B(\vr)$. Now
$$(\Iop+R_1S)^{-1}R_1(A+S)=\Iop-(\Iop+R_1S)^{-1}P_k=\Iop+K_1,$$
where, by proposition \ref{com1}, $K_1=-(\Iop+R_1S)^{-1}P_k$ is a compact operator on $\vr$. Further, $(A+S)R_2=AR_2+SR_2=\mathbb{I}_{\ur}+SR_2-P_{k^\dagger}$. Again $\|SR_2\|<1$, therefore by the same argument as above $\mathbb{I}_{\ur}+SR_2$ is invertible and $(\mathbb{I}_{\ur}+SR_2)^{-1}\in\B(\ur)$. Hence.
$$(A+S)R_2(\mathbb{I}_{\ur}+SR_2)^{-1}=\mathbb{I}_{\ur}-(\mathbb{I}_{\ur}+SR_2)^{-1}P_{k^\dagger}=\mathbb{I}_{\ur}+K_2,$$ where $K_2=-(\mathbb{I}_{\ur}+SR_2)^{-1}P_{k^\dagger}$ is a compact operator on $\ur$. That is, we have operators $S_1=(\Iop+R_1S)^{-1}R_1, S_2=R_2(\mathbb{I}_{\ur}+SR_2)^{-1}\in\B(\ur,\vr)$ and compact operators $K_1$ and $K_2$, on $\vr$ and $\ur$ respectively, such that
$$S_1(A+S)=\Iop+K_1\quad\text{and}\quad (A+S)S_2=\mathbb{I}_{\ur}+K_2.$$
Therefore, by theorem \ref{FT3}, $A+S$ is a Fredholm operator. The identity, $\ind(A+S)=\ind(A)$ follows similar to the proof in theorem \ref{FT4}.
\end{proof}
\begin{theorem}\label{FT6}
An operator $A\in\B(\vr)$ is left semi-Fredholm if and only if $\ra(A)$ is closed and $\kr(A)$ is finite dimensional. Hence
\begin{eqnarray}
\FF_l(\vr)&=&\{A\in\B(\vr)~~|~~\ra(A)~\text{ is closed and}~ \dim(\kr(A))<\infty\}\\
\FF_r(\vr)&=&\{A\in\B(\vr)~~|~~\ra(A)~\text{ is closed and}~ \dim(\kr(A^\dagger))<\infty\}
\end{eqnarray}
\end{theorem}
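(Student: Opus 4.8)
The plan is to establish the stated equivalence for $\FF_l(\vr)$ first, then read off both displayed identities.

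For the implication ``$\ra(A)$ closed and $\dim(\kr(A))<\infty$ $\Rightarrow$ $A\in\FF_l(\vr)$'', I would repeat the construction from the first half of the proof of Theorem \ref{FT3}. Since $\kr(A)$ is finite dimensional it is closed by Proposition \ref{ort}, so $\vr=\kr(A)\oplus\kr(A)^\perp$, and because $\ra(A)$ is closed it is itself a right quaternionic Hilbert space on which the restriction $\widehat{A}\colon\kr(A)^\perp\longrightarrow\ra(A)$ is a bounded bijection; by the open mapping theorem (Theorem \ref{open}) $\widehat{A}^{-1}\in\B(\ra(A),\kr(A)^\perp)$. Let $P\colon\vr\longrightarrow\ra(A)$ be the orthogonal projection, $\mathfrak{i}\colon\kr(A)^\perp\longrightarrow\vr$ the inclusion, and set $B=\mathfrak{i}\widehat{A}^{-1}P\in\B(\vr)$. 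A direct computation gives $BA=\Iop-P_k$, where $P_k$ is the orthogonal projection onto $\kr(A)$, which is finite rank hence compact by Theorem \ref{PD22}. Thus $BA=\Iop+K_1$ with $K_1=-P_k$ compact, i.e. $A\in\FF_l(\vr)$.

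For the converse, suppose $BA=\Iop+K_1$ with $B\in\B(\vr)$ and $K_1$ compact. Then $\kr(A)\subseteq\kr(BA)=\kr(\Iop+K_1)$, and since $\Iop+K_1$ is Fredholm by Theorem \ref{FT1}, $\dim(\kr(A))\le\dim(\kr(\Iop+K_1))<\infty$; hence $\vr=\kr(A)\oplus\kr(A)^\perp$. To get that $\ra(A)$ is closed, by Theorem \ref{NT1} it suffices to show the injective restriction $A|_{\kr(A)^\perp}$ is bounded below, because $\ra(A|_{\kr(A)^\perp})=\ra(A)$. If it were not, there would exist unit vectors $\phi_n\in\kr(A)^\perp$ with $\|A\phi_n\|\longrightarrow0$, whence $(\Iop+K_1)\phi_n=BA\phi_n\longrightarrow0$. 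Using compactness of $K_1$ on the bounded sequence $\{\phi_n\}$, pass to a subsequence with $K_1\phi_{n_k}\longrightarrow\psi$; then $\phi_{n_k}=(\Iop+K_1)\phi_{n_k}-K_1\phi_{n_k}\longrightarrow-\psi$, so $\|\psi\|=1$ and $-\psi\in\kr(A)^\perp$, while continuity of $A$ forces $A(-\psi)=\lim_k A\phi_{n_k}=0$, so $-\psi\in\kr(A)$, contradicting $\kr(A)\cap\kr(A)^\perp=\{0\}$. I expect this subsequence extraction, the only genuine use of the compactness of $K_1$, to be the crux of the argument; the rest is routine.

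Finally, the displayed formula for $\FF_l(\vr)$ is exactly the equivalence just proved. For $\FF_r(\vr)$ I would invoke Remark \ref{FR4}(e): $A\in\FF_r(\vr)$ if and only if $A^\dagger\in\FF_l(\vr)$, which by the first part means $\ra(A^\dagger)$ is closed and $\dim(\kr(A^\dagger))<\infty$; since $\ra(A^\dagger)$ is closed if and only if $\ra(A)$ is closed by Proposition \ref{NP1}, this gives the second displayed identity.
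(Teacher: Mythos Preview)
Your proof is correct. The ``$\Leftarrow$'' direction and the derivation of the $\FF_r(\vr)$ identity match the paper's argument essentially verbatim. The one genuine difference is in the ``$\Rightarrow$'' direction, specifically in establishing that $\ra(A)$ is closed. You work directly on $\kr(A)^\perp$ and use a compactness/subsequence extraction to show $A|_{\kr(A)^\perp}$ is bounded below. The paper instead restricts to $\kr(SA)^\perp$: since $SA=\Iop+K$ is Fredholm (Theorem~\ref{FT1}), $\ra(SA)$ is closed (Proposition~\ref{FP1}), so $(SA)|_{\kr(SA)^\perp}$ is bounded below by Theorem~\ref{NT1}; the inequality $c\|\phi\|\le\|SA\phi\|\le\|S\|\,\|A\phi\|$ then shows $A|_{\kr(SA)^\perp}$ is bounded below, hence has closed range, and $\ra(A)=A(\kr(SA))+A(\kr(SA)^\perp)$ is the sum of a finite-dimensional subspace and a closed subspace, hence closed.

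Your route is more self-contained---it uses compactness of $K_1$ directly rather than routing through the Fredholm property of $\Iop+K_1$ and Proposition~\ref{FP1}---while the paper's route leans more on the machinery already built and avoids any sequential argument. Both are standard and of comparable length.
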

\begin{proof}
($\Rightarrow$) Suppose that $A\in\B(\vr)$ is left semi-Fredholm. Then there exists $S\in\B(\vr)$ and a compact operator $K$ in $\vr$ such that $SA=\Iop+K$. We have
$\kr(A)\subseteq\kr(SA)=\kr(\Iop+K)$ and
$\ra(SA)=\ra(\Iop+K)$. By theorem \ref{FT1}, $\Iop+K$ is Fredholm and hence $\dim(\kr(\Iop+K))<\infty$ and which implies $\dim(\kr(A))<\infty$. By proposition \ref{FP1}, $\ra(\Iop+K)$ is closed. That is, $\dim(\kr(A))<\infty$, $\dim(A(\kr(SA)))<\infty$ and $\ra(SA)$ is closed.Consider the operator,
$$(SA)\vert_{\kr(SA)^\perp}:\kr(SA)^\perp\longrightarrow\vr.$$
Since this operator is injective with a closed range, by theorem \ref{NT1}, it is bounded below. Hence, there exists $c>0$ such that
$$c\|\phi\|_{\vr}\leq \|SA\phi\|_{\vr}\leq\|S\|\|A\phi\|_{\vr}\quad\text{for all}~~\phi\in\kr(SA)^\perp.$$ Hence,
$$A\vert_{\kr(SA)^\perp}:\kr(SA)^\perp\longrightarrow\vr$$ is bounded below, therefore by theorem \ref{NT1}, $A\vert_{\kr(SA)^\perp}$ has a closed range. Since $\vr=\kr(SA)\oplus\kr(SA)^\perp$, we have
$$\ra(A)=A(\vr)=A(\kr(SA)\oplus\kr(SA)^\perp)=A(\kr(SA))\oplus A(\kr(SA)^\perp)),$$
which is closed because the direct sum of a closed subspace and a finite dimensional space is closed.\\
($\Leftarrow$) Suppose that $\dim(\kr(A))<\infty$ and $\ra(A)$ is closed. Hence, $A\vert_{\kr(A)^\perp}:\kr(A)^\perp\longrightarrow\vr$ is injective and has closed range, $\ra(A\vert_{\kr(A)^\perp})=\ra(A)$. Therefore by theorem \ref{NT1}, it has a bounded inverse. Let $P_r:\vr\longrightarrow\ra(A)$ be the orthogonal projection onto $\ra(A)$. Define $T\in\B(\vr)$ by $T=(A\vert_{\kr(A)^\perp})^{-1}P_r$. If $\phi\in\kr(A)$, then $TA\phi=0$. If $\psi\in\kr(A)^\perp$, then
$$TA\psi=(A\vert_{\kr(A)^\perp})^{-1}P_r(A\vert_{\kr(A)^\perp})\psi=(A\vert_{\kr(A)^\perp})^{-1}(A\vert_{\kr(A)^\perp})\psi=\psi.$$
Therefore, for every $\eta=\phi+\psi\in\vr=\kr(A)\oplus\kr(A)^\perp$,
$$TA\eta=TA\phi+TA\psi=TA\psi=\psi=P_{n}\psi,$$
where $P_n:\vr\longrightarrow\kr(A)^\perp$ is the orthogonal projection onto $\kr(A)^\perp$. Thus, $$TA=P_n=\Iop-(\Iop-P_n)=\Iop+K,$$
where $K=-(\Iop-P_n)$ is a finite rank operator as it is an orthogonal projection onto the finite dimensional subspace $\kr(A)$. That is, $TA=\Iop+K$ and $K$ is compact, and hence $A$ is left semi-Fredholm. Therefore,
\begin{equation}\label{FE7}
\FF_l(\vr)=\{A\in\B(\vr)~~|~~\ra(A)~\text{ is closed and}~ \dim(\kr(A))<\infty\}.
\end{equation}
Now by remark \ref{FR4}, $A^\dagger\in\FF_l(\vr)\Leftrightarrow A\in\FF_r(\vr)$, and by proposition \ref{NP1}, $\ra(A)$ is closed if and only if $\ra(A^\dagger)$ is closed. Hence, from equation \ref{FE7}, we have
$$\FF_r(\vr)=\{A\in\B(\vr)~~|~~\ra(A)~\text{ is closed and}~ \dim(\kr(A^\dagger))<\infty\}.$$
\end{proof}
\begin{remark}\label{ER2}
Let $A\in\B(\vr)$.
\begin{enumerate}
\item [(a)] The so-called Weyl operators are Fredholm operators on $\vr$ with null index. That is, the set of all Weyl operators,
$$\Wy(\vr)=\{A\in\FF(\vr)~~|~~\ind(A)=0\}.$$
\item[(b)] Since, by remark \ref{FR4} (e), $A\in\FF(\vr)\Leftrightarrow A^\dagger\in\FF(\vr)$ and $\ind(A)=-\ind(A^\dagger)$, $A\in\Wy(\vr)\Leftrightarrow A^\dagger\in\Wy(\vr)$.
\item[(c)] By proposition \ref{FT1} and lemma \ref{FL2}, if $F$ is a finite rank operator, then $\Iop+F\in\Wy(\vr)$. 
\item[(d)]By theorem \ref{FT2}, $A,B\in\Wy(\vr)\Rightarrow AB\in\Wy(\vr)$.
\item[(e)]By theorem \ref{FT4}, $A\in\Wy(\vr), K\in\B_0(\vr)\Rightarrow A+K\in\Wy(\vr).$
\item[(f)] By corollary \ref{CI1}, $A\in\B(\vr)$ is invertible, then $A\in\Wy(\vr)$
\item[(g)]Suppose $\dim(\vr)<\infty$, then $\dim(\vr)=\dim(\kr(A))+\dim(\ra(A))$. Also $\vr=\overline{\ra(A)}+\ra(A)^\perp$. Hence $\dim(\vr)=\dim(\ra(A))+\dim(\ra(A)^\perp)$. Thus,
\begin{eqnarray*}
\ind(A)&=&\dim(\kr(A))-\dim(\kr(A^\dagger))\\
&=&\dim(\kr(A))-\dim(\ra(A)^\perp)\\
&=&\dim(\kr(A))+\dim(\ra(A))-\dim(\vr)=0.
\end{eqnarray*}
Therefore, in the finite dimensional space, every operator in $\B(\vr)$ is a Fredholm operator with index zero. In this case, $\Wy(\vr)=\B(\vr)$.
\end{enumerate}
\end{remark}

\begin{remark}\label{FR3}
In the complex case, the Fredholm theory can also be extended to unbounded operators using the graph norm. For such a complex treatment we refer the reader to, for example, \cite{Goh}. Following the complex theory we may be able to extend the quaternionic Fredholm theory to quaternionic unbounded operators.
\end{remark}
\section{essential S-spectrum}
In this section we define and study the essential S-spectrum in $\vr$ in terms of the Fredholm operators. In particular, we give an interesting characterization to the S-spectrum in terms of the Fredholm operators and the Fredholm index (see proposition \ref{EP7}).
\begin{theorem}\cite{ghimorper}\label{ET0}
Let $\vr$ be a right quaternionic Hilbert space equipped with a left scalar multiplication. Then the set $\B(\vr)$ equipped with the point-wise sum, with the left and right scalar multiplications defined in equations \ref{lft_mul-op} and \ref{rgt_mul-op}, with the composition as product, with the adjunction $A\longrightarrow A^\dagger$, as in \ref{Ad1}, as $^*-$ involution and with the norm defined in \ref{PE1}, is a quaternionic two-sided Banach $C^*$-algebra with unity $\Iop$.
\end{theorem}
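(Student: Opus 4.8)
The plan is to verify, one family of axioms at a time, that $\B(\vr)$ meets the definition of a two-sided quaternionic Banach $C^*$-algebra: (i) it is a two-sided quaternionic vector space under the pointwise sum and the left and right scalar multiplications of equations \ref{lft_mul-op} and \ref{rgt_mul-op}; (ii) composition turns it into an associative unital algebra whose norm \ref{PE1} is submultiplicative, and it is complete in that norm; (iii) the adjunction $A\mapsto A^\dagger$ of \ref{Ad1} is an isometric involution compatible with the module and product structures; (iv) the $C^*$-identity $\|A^\dagger A\|=\|A\|^2$ holds. Most of these are literal transcriptions of the complex arguments; the only genuinely quaternionic bookkeeping is keeping the basis-dependent left multiplication consistent with composition and adjunction.

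First I would dispatch the algebraic skeleton. Closure under the pointwise sum and boundedness of the sum follow from the triangle inequality; that $\q A$ and $A\q$ are again bounded right linear operators was already recorded after \ref{lft_mul-op}, and $\|\q A\|=|\q|\,\|A\|=\|A\q\|$ is proposition \ref{lft_mul}(b). The module axioms $\q(A+B)=\q A+\q B$, $(\q+\pu)A=\q A+\pu A$, $(\q\pu)A=\q(\pu A)$, $1A=A$, $rA=Ar$ for $r\in\R$, and their right-sided analogues, are all checked pointwise from proposition \ref{lft_mul}. Composition is associative and $\R$-bilinear, has unit $\Iop$, satisfies the module-compatibility relations $\q(AB)=(\q A)B$ and $(AB)\q=A(B\q)$ (again pointwise, using proposition \ref{lft_mul}(a),(c)), and obeys $\|AB\|\le\|A\|\,\|B\|$ together with $\|\Iop\|=1$ by the standard operator-norm estimates. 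Completeness is the familiar argument: for a Cauchy sequence $\{A_n\}$ in $\B(\vr)$ and each $\phi\in\vr$ the sequence $\{A_n\phi\}$ is Cauchy in the complete space $\vr$; its limit defines a right linear map $A$, which is bounded because $\{\|A_n\|\}$ is bounded, and one checks $\|A_n-A\|\to0$.

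Next I would treat the involution. Existence and uniqueness of $A^\dagger\in\B(\vr)$, with $\D(A^\dagger)=\vr$ and $\|A^\dagger\|=\|A\|$, come from the quaternionic Riesz representation theorem in $\vr$ (this is the content of \ref{Ad1} for bounded $A$). Moving operators across the inner product in \ref{Ad1} gives $(A+B)^\dagger=A^\dagger+B^\dagger$, $(AB)^\dagger=B^\dagger A^\dagger$ and $(A^\dagger)^\dagger=A$, while $(\q A)^\dagger=A^\dagger\overline{\q}$ and $(A\q)^\dagger=\overline{\q}A^\dagger$ are precisely equation \ref{sc_mul_aj-op}; these are the conjugate-linearity and anti-multiplicativity demanded of a $^*$-involution on a two-sided quaternionic algebra. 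Finally, the $C^*$-identity: for a unit vector $\phi\in\vr$,
\begin{equation*}
\|A\phi\|^{2}_{\vr}=\langle A\phi\mid A\phi\rangle_{\vr}=\langle A^\dagger A\phi\mid\phi\rangle_{\vr}\le\|A^\dagger A\phi\|_{\vr}\,\|\phi\|_{\vr}\le\|A^\dagger A\|,
\end{equation*}
by the quaternionic Cauchy--Schwarz inequality; taking the supremum over such $\phi$ yields $\|A\|^{2}\le\|A^\dagger A\|\le\|A^\dagger\|\,\|A\|=\|A\|^{2}$, hence equality.

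I expect the main obstacle to be not any single estimate but the consistent handling of the basis-dependent left multiplication: one must invoke the identities of proposition \ref{lft_mul} and equation \ref{sc_mul_aj-op} in exactly the right places so that composition and adjunction respect both module structures simultaneously, i.e.\ so that the object is genuinely \emph{two-sided} over $\quat$ and not merely a real $C^*$-algebra. Once that bookkeeping is arranged, completeness and the $C^*$-identity are verbatim copies of the complex proofs.
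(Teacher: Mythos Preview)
Your proof sketch is correct: each block of axioms is verified by the right tool (proposition \ref{lft_mul} for the module identities, equation \ref{sc_mul_aj-op} for the behaviour of the involution under scalars, Riesz representation for existence of $A^\dagger$, and the standard Cauchy--Schwarz argument for the $C^*$-identity), and the completeness argument is the usual one.

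The paper, however, does not prove this statement at all: Theorem \ref{ET0} is quoted verbatim from \cite{ghimorper} and is used as a black box. So there is no ``paper's own proof'' to compare against; you have supplied a self-contained verification where the authors simply cite the literature. What your approach buys is independence from the external reference, at the cost of having to track the basis-dependent left multiplication carefully---which you do, and which is indeed the only place where the quaternionic setting departs from the complex one.
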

\begin{remark}\label{ER0}
In the above theorem, if the left scalar multiplication is left out on $\vr$, then $\B(\vr)$ becomes a real Banach $C^*$-algebra with unity $\Iop$.
\end{remark}
\begin{theorem}\label{ET1}
The set of all compact operators, $\B_0(\vr)$ is a closed biideal of $\B(\vr)$ and is closed under adjunction.
\end{theorem}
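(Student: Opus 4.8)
The plan is to assemble the statement from results already proved in Section~4, since it is really a repackaging of Proposition~\ref{com1} and Theorem~\ref{comeq}. First I would check that $\B_0(\vr)$ is a quaternionic two-sided submodule of $\B(\vr)$: closure under the left and right quaternionic scalar multiplications of operators is Proposition~\ref{com1}(a) with $\ur=\vr$, and closure under addition is the content of Proposition~\ref{com1}(b). For the latter one spells out the usual diagonal argument: given $K,L\in\B_0(\vr)$ and a bounded sequence $\{\phi_n\}\subseteq\vr$, extract a subsequence along which $\{K\phi_n\}$ converges and then a further subsequence along which $\{L\phi_n\}$ converges; along that last subsequence $\{(K+L)\phi_n\}$ converges, so $K+L\in\B_0(\vr)$. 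Next, for $A\in\B(\vr)$ and $K\in\B_0(\vr)$, Proposition~\ref{com1}(d) (again with $\ur=\vr$) gives $AK,KA\in\B_0(\vr)$, so $\B_0(\vr)$ absorbs products with $\B(\vr)$ on both sides; together with the submodule structure this is exactly the biideal assertion.

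For norm-closedness I would invoke Proposition~\ref{com1}(c): if $\{A_n\}\subseteq\B_0(\vr)$ and $A\in\B(\vr)$ with $\|A_n-A\|\to 0$, then $A\in\B_0(\vr)$; hence $\B_0(\vr)$ is closed in the operator norm of $\B(\vr)$. For closure under adjunction, if $K\in\B_0(\vr)$ then the equivalence (a)$\Leftrightarrow$(c) of Theorem~\ref{comeq}, applied with $\ur=\vr$, gives that $K^\dagger$ is compact, i.e. $K^\dagger\in\B_0(\vr)$.

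I do not expect a genuine obstacle, as every ingredient is already in hand; the only point that deserves a line of explicit care is the closure of $\B_0(\vr)$ under addition, which is only implicit in Proposition~\ref{com1}(b) and which I would therefore write out via the diagonal-subsequence argument above, so that the $C^*$-algebraic construction of Section~7 (the quaternionic Calkin algebra $\B(\vr)/\B_0(\vr)$) rests on a fully detailed argument.
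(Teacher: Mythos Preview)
Your proposal is correct and follows essentially the same approach as the paper: the paper's proof simply cites Theorem~\ref{ET0} together with Proposition~\ref{com1} for the closed biideal part, and Theorem~\ref{comeq} for closure under adjunction. Your write-up is just a more detailed unpacking of which clauses of Proposition~\ref{com1} are being invoked (including spelling out the addition closure implicit in Proposition~\ref{com1}(b)), so there is no substantive difference.
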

\begin{proof}
The theorem \ref{ET0} gives together with proposition \ref{com1} that $\B_0(\vr)$ is a closed biideal of $\B(\vr)$. Theorem \ref{comeq} is enough to conclude that $\B_0(\vr)$ is closed under adjunction. Hence the theorem holds.
\end{proof}
On the quotient space $B(\vr)/B_0(\vr)$ the coset of $A\in\B(\vr)$ is
$$[A]=\{S\in\B(\vr)~|~S=A+K~~~~\text{for some}~~~K\in\B_0(\vr)\}=A+\B_0(\vr).$$
On the quotient space define the product
$$[A][B]=[AB].$$
Since $\B_0(\vr)$ is a closed subspace of $\B(\vr)$, with the above product, $\B(\vr)/\B_0(\vr)$ is a unital Banach algebra with unit $[\Iop]$. We call this algebra the quaternionic Calkin algebra. Define the natural quotient map
$$\pi:\B(\vr)\longrightarrow \B(\vr)/\B_0(\vr)\quad\text{by}\quad \pi(A)=[A]=A+\B_0(\vr).$$
Note that $[0]=\B_0(\vr)$ and hence
$$\kr(\pi)=\{A\in\B(\vr)~~|~~\pi(A)=[0]\}=\B_0(\vr).$$
Since $\B_0(\vr)$ is an ideal of $\B(\vr)$, for $A,B\in\B(\vr)$, we have
\begin{enumerate}
\item[(a)]$\pi(A+B)=(A+B)+\B_0(\vr)=(A+\B_0(\vr))+(B+\B_0(\vr))=\pi(A)+\pi(B)$.
\item[(b)]$\pi(AB)=AB+\B_0(\vr)=(A+\B_0(\vr))(B+\B_0(\vr))=\pi(A)\pi(B).$
\item[(c)]$\pi(\Iop)=[\Iop]$.
\end{enumerate}
Hence $\pi$ is a unital homomorphism. The norm on $\B(\vr)/\B_0(\vr)$ is given by
$$\|[A]\|=\inf_{K\in\B_0(\vr)}\|A+K\|\leq\|A\|.$$
Therefore $\pi$ is a contraction.
\begin{theorem}\label{ET2}
Let $A\in\B(\vr)$. The following are pairwise equivalent.
\begin{enumerate}
\item [(a)]$A\in\mathcal{F}_l(\vr)$.
\item[(b)]There exist $S\in\B(\vr)$ and $K\in\B_0(\vr)$ such that $SA=\Iop+K$.
\item[(c)]There exist $S\in\B(\vr)$ and $K_1,K_2,K_3\in\B_0(\vr)$ such that $(S+K_1)(A+K_2)=\Iop+K_3$.
\item[(d)]$\pi(S)\pi(A)=\pi(\Iop)$, the identity in  $\B(\vr)/\B_0(\vr)$, for some $\pi(S)\in \B(\vr)/\B_0(\vr)$.
\item[(e)] $\pi(A)$ is left invertible in  $\B(\vr)/\B_0(\vr)$.
\end{enumerate}
\end{theorem}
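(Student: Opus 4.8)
The plan is to run the cyclic chain $(a)\Rightarrow(b)\Rightarrow(c)\Rightarrow(d)\Rightarrow(e)\Rightarrow(b)\Rightarrow(a)$. The observation that drives the whole argument is that $(a)\Leftrightarrow(b)$ is nothing but the definition of a left semi-Fredholm operator (Remark \ref{FR4}(a)) read with $\ur=\vr$; consequently the substance of the theorem is the short loop through the quaternionic Calkin algebra $\B(\vr)/\B_0(\vr)$, and every step of that loop is essentially immediate once the algebraic framework established just above the statement is in hand.

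First I would record $(a)\Leftrightarrow(b)$: if $A\in\FF_l(\vr)$ then by definition there are $B\in\B(\vr)$ and $K_1\in\B_0(\vr)$ with $BA=\Iop+K_1$, which is $(b)$ with $S=B$, $K=K_1$; and conversely $(b)$ is precisely the defining condition for membership in $\FF_l(\vr)$. Then $(b)\Rightarrow(c)$ is trivial, taking $K_1=K_2=0$ and $K_3=K$. For $(c)\Rightarrow(d)$ I would apply the quotient map $\pi$ to the identity $(S+K_1)(A+K_2)=\Iop+K_3$: since $\pi$ is a unital homomorphism with $\kr(\pi)=\B_0(\vr)$ we have $\pi(S+K_1)=\pi(S)$, $\pi(A+K_2)=\pi(A)$, and $\pi(\Iop+K_3)=\pi(\Iop)$, so $\pi(S)\pi(A)=\pi(\Iop)$. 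The implication $(d)\Rightarrow(e)$ is merely unwinding the definition of left invertibility in the unital Banach algebra $\B(\vr)/\B_0(\vr)$: the element $\pi(S)$ produced by $(d)$ is, by definition, a left inverse of $\pi(A)$.

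The only step carrying even minimal content is $(e)\Rightarrow(b)$, which is a lifting argument. If $\pi(A)$ is left invertible, choose $S\in\B(\vr)$ with $\pi(S)\pi(A)=\pi(\Iop)$; then $\pi(SA-\Iop)=[0]$, and since $\kr(\pi)=\B_0(\vr)$ this forces $K:=SA-\Iop\in\B_0(\vr)$, i.e. $SA=\Iop+K$ with $K$ compact, which is $(b)$. Finally $(b)\Rightarrow(a)$ closes the loop by the definition once more, and chaining the implications yields pairwise equivalence of all five statements.

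I do not expect a genuine obstacle: the theorem is in effect a dictionary entry translating ``left semi-Fredholm'' into the language of the Calkin algebra. The one point requiring care is the correct invocation of the preceding machinery — Theorems \ref{ET0} and \ref{ET1} are what guarantee that $\B(\vr)/\B_0(\vr)$ is a bona fide unital Banach algebra and that $\pi$ is a well-defined unital homomorphism with kernel exactly $\B_0(\vr)$, so that ``left invertible'' is meaningful and the lifting in $(e)\Rightarrow(b)$ is legitimate (Proposition \ref{com1} secures the stability of $\B_0(\vr)$ under the scalar multiplications, though it is not strictly needed here). Beyond verifying these citations, the proof is pure bookkeeping.
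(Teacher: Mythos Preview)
Your proof is correct and follows essentially the same approach as the paper's: both recognize $(a)\Leftrightarrow(b)$ as definitional and handle the remaining equivalences by passing through the quotient homomorphism $\pi$. The only cosmetic differences are that the paper proves $(b)\Rightarrow(c)$ by expanding $(S+K_1)(A+K_2)$ for arbitrary $K_1,K_2$ (rather than your trivial choice $K_1=K_2=0$) and closes the loop via $(d)\Rightarrow(b)$ directly instead of your $(d)\Rightarrow(e)\Rightarrow(b)$.
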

\begin{proof}
By the definition of left semi-Fredholm operators (a) and (b) are equivalent.\\
(b)$\Rightarrow (c)$: Suppose (b) holds. Since $(S+K_1)(A+K_2)=SA+SK_2+K_1A+K_1K_2=\Iop+K+SK_2+K_1A+K_1K_2=\Iop+K_3$,
where, by proposition \ref{com1}, $K_3=K+SK_2+K_1A+K_1K_2$ is compact.\\
(c)$\Rightarrow$ (d): Suppose (c) holds. Then there exists $S_1\in[S]=\pi(S), A_1\in[A]=\pi(A)$ and $J\in[\Iop]=\pi(\Iop)$ such that $S_1A_1=J$. Hence $\pi(S)\pi(A)=[S][A]=[SA]=[J]=[\Iop]=\pi(\Iop).$
Therefore (d) is established.\\
(d)$\Rightarrow$ (b): Suppose (d) holds. $B\in\pi(S)\pi(A)=\pi(SA)$ if and only if $B=SA+K_1$ for some $K_1\in\B_o(\vr)$. Also $B\in\pi(\Iop)=[\Iop]$ if and only if $B=\Iop+K_2$ for some $K_2\in\B_o(\vr)$. Therefore, $SA=\Iop+K_2-K_1=\Iop+K$, where $K=K_2-K_1\in\B_0(\vr)$. Thus $A\in\mathcal{F}_l(\vr)$.\\
(a) and (e) are equivalent by the definition of left invertibility.
\end{proof}
\begin{corollary}\label{EC1}
Let $A\in\B(\vr)$. $A\in\mathcal{F}_l(\vr)$ (or $A\in\mathcal{F}_r(\vr)$) if and only if $\pi(A)$ is left (or right) invertible in the Calkin algebra  $\B(\vr)/\B_0(\vr)$.
\end{corollary}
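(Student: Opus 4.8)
The left-invertible half of the statement is already contained in Theorem \ref{ET2}: the equivalence of (a) and (e) there says precisely that $A\in\mathcal{F}_l(\vr)$ if and only if $\pi(A)$ is left invertible in $\B(\vr)/\B_0(\vr)$. So the only thing that remains is the right-invertible case, and the plan is to reduce it to the left case by passing to adjoints.

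First I would record that adjunction descends to the quaternionic Calkin algebra. By Theorem \ref{ET1}, $\B_0(\vr)$ is closed under adjunction, so the assignment $[A]\longmapsto[A^\dagger]$ is a well-defined map on $\B(\vr)/\B_0(\vr)$; it is conjugate-linear, anti-multiplicative, involutive, and fixes $\pi(\Iop)$. In particular $\pi(A)^\dagger=\pi(A^\dagger)$ and $\pi(\Iop)^\dagger=\pi(\Iop)$. Consequently, taking adjoints in an identity of the form $\pi(A)\pi(S)=\pi(\Iop)$ yields $\pi(S)^\dagger\pi(A)^\dagger=\pi(\Iop)$, and conversely; hence $\pi(A)$ is right invertible in the Calkin algebra if and only if $\pi(A^\dagger)$ is left invertible there.

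Next I would invoke the semi-Fredholm duality already established: by Remark \ref{FR4}(e), $A\in\mathcal{F}_r(\vr)$ if and only if $A^\dagger\in\mathcal{F}_l(\vr)$. Chaining the three equivalences — $A\in\mathcal{F}_r(\vr)\Leftrightarrow A^\dagger\in\mathcal{F}_l(\vr)$, then $A^\dagger\in\mathcal{F}_l(\vr)\Leftrightarrow\pi(A^\dagger)$ left invertible (the left case, Theorem \ref{ET2}), then $\pi(A^\dagger)$ left invertible $\Leftrightarrow\pi(A)$ right invertible (the previous step) — gives exactly $A\in\mathcal{F}_r(\vr)\Leftrightarrow\pi(A)$ right invertible, completing the proof. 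Alternatively, one could bypass adjoints entirely and repeat the proof of Theorem \ref{ET2} verbatim with the words ``left'' and ``right'' interchanged, replacing each relation $SA=\Iop+K$ by $AS=\Iop+K$ throughout; the argument is identical.

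The only genuinely non-routine point is the bookkeeping in the first step: one must verify that the quotient involution on the Calkin algebra behaves as expected (well-definedness from Theorem \ref{ET1}, anti-multiplicativity, and $\pi(\Iop)^\dagger=\pi(\Iop)$), so that ``$\pi(A)$ right invertible'' translates cleanly into ``$\pi(A^\dagger)$ left invertible''. Everything else is a direct citation of Theorem \ref{ET2} and Remark \ref{FR4}(e).
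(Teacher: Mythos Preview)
Your proof is correct. The paper's own argument is essentially your \emph{alternative} route: it simply notes that the left case is Theorem~\ref{ET2} and that ``a similar version of Theorem~\ref{ET2} holds for $\mathcal{F}_r(\vr)$,'' i.e., one repeats the chain (a)$\Leftrightarrow$(b)$\Leftrightarrow$(c)$\Leftrightarrow$(d)$\Leftrightarrow$(e) with $SA=\Iop+K$ replaced by $AS=\Iop+K$ throughout. Your primary argument via adjoints is a genuinely different (and equally valid) path: you use Theorem~\ref{ET1} to push the involution down to the Calkin algebra, obtain $\pi(A)$ right invertible $\Leftrightarrow\pi(A^\dagger)$ left invertible, and then invoke Remark~\ref{FR4}(e) and the already-proved left case. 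The adjoint route has the advantage of making the left/right symmetry a formal consequence of the $*$-structure rather than a repeated computation; the paper's route has the advantage of avoiding the (admittedly routine) verification that the quotient involution is well defined and anti-multiplicative. Either is fine here.
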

\begin{proof}
It is straightforward from the theorem \ref{ET2} for $\mathcal{F}_l(\vr)$. Also similar version of theorem \ref{ET2} holds for $\mathcal{F}_r(\vr)$.
\end{proof}
\begin{definition}\label{ED1}
The essential $S$-spectrum (or the Calkin $S$-spectrum) $\se(A)$ of $A\in\B(\vr)$ is the $S$-spectrum of $\pi(A)$ in the unital Banach algebra $B(\vr)/\B_0(\vr)$. That is, $$\se(A)=\sigma_S(\pi(A)).$$ Similarly, the left essential $S$-spectrum $\sel(A)$ and the right essential $S$-spectrum $\ser(A)$ are the left and right $S$-spectrum of $\pi(A)$ respectively. That is,
$$\sel(A)=\sigma_l^S(\pi(A))\quad\text{and}\quad \ser(A)=\sigma_r^S(\pi(A))$$ in  $B(\vr)/\B_0(\vr)$.\\
Clearly, by definition, $\se(A)=\sel(A)\cup\ser(A)$ and $\se(A)$ is a compact subset of $\quat$.
\end{definition}
\begin{proposition}\label{EP1}
Let $A\in\B(\vr)$, then
\begin{eqnarray}
\sel(A)&=&\{\qu\in\quat~|~R_\qu(A)\in\B(\vr)\setminus\mathcal{F}_l(\vr)\}\label{EE1}\\
\ser(A)&=&\{\qu\in\quat~|~R_\qu(A)\in\B(\vr)\setminus\mathcal{F}_r(\vr)\}\label{EE2}
\end{eqnarray}
\end{proposition}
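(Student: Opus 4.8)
The plan is to unwind Definition \ref{ED1} and to observe that the quotient map $\pi$ intertwines the pseudo-resolvent construction; Corollary \ref{EC1} then does the rest. By definition $\sel(A)=\sigma_l^S(\pi(A))$ is the set of $\qu\in\quat$ for which the pseudo-resolvent $R_\qu(\pi(A))=\pi(A)^2-2\text{Re}(\qu)\pi(A)+|\qu|^2[\Iop]$, formed inside the unital Banach algebra $\B(\vr)/\B_0(\vr)$, fails to be left invertible. So the goal is to identify this set with $\{\qu\in\quat~|~R_\qu(A)\in\B(\vr)\setminus\mathcal{F}_l(\vr)\}$, and symmetrically for the right variant.

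First I would record the identity $\pi(R_\qu(A))=R_\qu(\pi(A))$. Since $\pi$ is a unital homomorphism and $\B_0(\vr)$ is a real-linear subspace of $\B(\vr)$, the map $\pi$ is in particular real-linear, so for any fixed $\qu\in\quat$,
$$
\pi(R_\qu(A))=\pi\bigl(A^2-2\text{Re}(\qu)A+|\qu|^2\Iop\bigr)=\pi(A)^2-2\text{Re}(\qu)\pi(A)+|\qu|^2[\Iop]=R_\qu(\pi(A)),
$$
because $\text{Re}(\qu)$ and $|\qu|^2$ are real scalars. This is precisely the place where the real coefficients of $R_\qu$ are used: the basis-dependent left multiplication on $\vr$ and the non-commutativity of $\quat$ play no role.

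Then I would invoke Corollary \ref{EC1}. For each $\qu$ the operator $R_\qu(A)$ lies in $\B(\vr)$, so the corollary applies and says that $R_\qu(A)\in\mathcal{F}_l(\vr)$ if and only if $\pi(R_\qu(A))$ is left invertible in $\B(\vr)/\B_0(\vr)$. Combining this with the previous identity gives, for every $\qu\in\quat$,
$$
\qu\in\sel(A)\iff \pi(R_\qu(A))\text{ is not left invertible}\iff R_\qu(A)\notin\mathcal{F}_l(\vr),
$$
which is exactly \eqref{EE1}. Equation \eqref{EE2} is obtained by the same argument with ``left'' replaced by ``right'' everywhere, using $\ser(A)=\sigma_r^S(\pi(A))$ and the right-invertibility half of Corollary \ref{EC1}.

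I do not expect any genuine obstacle; the statement is essentially a dictionary translation through the Calkin quotient. The only step that needs to be written carefully is the commutation $\pi\circ R_\qu=R_\qu\circ\pi$, after which \eqref{EE1} and \eqref{EE2} drop out of the characterisation of semi-Fredholm operators by one-sided invertibility modulo the compact operators.
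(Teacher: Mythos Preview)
Your proof is correct and follows essentially the same route as the paper: both arguments reduce to Corollary~\ref{EC1} applied to $R_\qu(A)$, with the identification $\sel(A)=\sigma_l^S(\pi(A))$. You are in fact more explicit than the paper in writing out the identity $\pi(R_\qu(A))=R_\qu(\pi(A))$, which the paper uses silently when it passes from ``$\pi(R_\qu(A))$ not left invertible'' to ``$\qu\in\sigma_l^S(\pi(A))$''.
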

\begin{proof}
Let $A\in\B(\vr)$. Then by corollary \ref{EC1}, $R_\qu(A)\not\in\mathcal{F}_l(\vr)$ if and only if $\pi(R_\qu(A))$ is not left invertible in $B(\vr)/\B_0(\vr)$, which means, by the definition of left $S$-spectrum, $\qu\in\sigma_l^S(\pi(A))$. That is,
$\qu\in\sel(A)$ if and only if $R_\qu(A)\not\in\mathcal{F}_l(\vr)$. Hence we have equation \ref{EE1}. A similar argument proves equation \ref{EE2}.
\end{proof}
\begin{corollary}(Atkinson theorem)\label{EC2}
Let $A\in\B(\vr)$, then
\begin{equation}\label{EE3}
\se(A)=\{\qu\in\quat~~~|~~~R_\qu(A)\in\B(\vr)\setminus\mathcal{F}(\vr)\}.
\end{equation}
\end{corollary}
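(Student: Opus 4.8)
The plan is to reduce everything to results already in hand: Definition \ref{ED1}, Proposition \ref{EP1}, and the decomposition $\mathcal{F}(\vr)=\mathcal{F}_l(\vr)\cap\mathcal{F}_r(\vr)$ recorded in Remark \ref{FR4}(c). No new operator-theoretic work is needed; the statement is essentially a set-theoretic manipulation of the left and right essential $S$-spectra.

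First I would invoke Definition \ref{ED1}, which gives $\se(A)=\sel(A)\cup\ser(A)$. Then I would substitute the characterizations from Proposition \ref{EP1}, namely $\sel(A)=\{\qu\in\quat~|~R_\qu(A)\in\B(\vr)\setminus\mathcal{F}_l(\vr)\}$ and $\ser(A)=\{\qu\in\quat~|~R_\qu(A)\in\B(\vr)\setminus\mathcal{F}_r(\vr)\}$. Taking the union, a quaternion $\qu$ lies in $\se(A)$ iff $R_\qu(A)\notin\mathcal{F}_l(\vr)$ or $R_\qu(A)\notin\mathcal{F}_r(\vr)$, i.e. iff $R_\qu(A)\notin\mathcal{F}_l(\vr)\cap\mathcal{F}_r(\vr)$. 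By Remark \ref{FR4}(c) this intersection is exactly $\mathcal{F}(\vr)$, and since in all cases $R_\qu(A)=A^2-2\mathrm{Re}(\qu)A+|\qu|^2\Iop\in\B(\vr)$, the condition becomes $R_\qu(A)\in\B(\vr)\setminus\mathcal{F}(\vr)$, which is precisely equation \eqref{EE3}.

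There is no genuine obstacle here; the only point to state carefully is that $R_\qu(A)$ is always a bounded right linear operator, so that the set on the right-hand side of \eqref{EE3} is meaningful and matches the membership condition exactly (there is no hidden unboundedness issue, since $A\in\B(\vr)$ and the coefficients $2\mathrm{Re}(\qu)$ and $|\qu|^2$ are real). One may optionally remark, as a sanity check, that the same chain of equalities combined with Corollary \ref{EC1} shows $\qu\in\se(A)$ iff $\pi(R_\qu(A))$ fails to be invertible in the Calkin algebra $\B(\vr)/\B_0(\vr)$, which is consistent with $\se(A)=\sigma_S(\pi(A))$; but this is already built into Proposition \ref{EP1}, so it is not needed for the proof proper.
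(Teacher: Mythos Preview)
Your proposal is correct and follows essentially the same approach as the paper's own proof: the paper simply cites $\se(A)=\sel(A)\cup\ser(A)$, $\FF(\vr)=\FF_l(\vr)\cap\FF_r(\vr)$, and Proposition~\ref{EP1}, and declares the result straightforward. You have merely written out the underlying set-theoretic manipulation in full, which is exactly what the paper leaves implicit.
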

\begin{proof}
Let $A\in\B(\vr)$. Since $\se(A)=\sel(A)\cup\ser(A)$ and $\FF(\vr)=\FF_l(\vr)\cap\FF_r(\vr)$, it is straightforward from proposition \ref{EP1}.
\end{proof}
\begin{corollary}(Atkinson theorem)\label{EC3}
Let $A\in\B(\vr)$. $A$ is Fredholm if and only if $\pi(A)$ is invertible in the Calkin algebra $B(\vr)/\B_0(\vr)$.
\end{corollary}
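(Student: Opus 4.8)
The plan is to obtain this as a direct consequence of Corollary~\ref{EC1} (equivalently Theorem~\ref{FT3}) together with the elementary fact that in a unital associative algebra an element possessing both a left inverse and a right inverse is invertible, its two one-sided inverses necessarily coinciding.

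First I would invoke Remark~\ref{FR4}(c), which gives $\FF(\vr)=\FF_l(\vr)\cap\FF_r(\vr)$; thus $A$ is Fredholm precisely when $A$ is simultaneously left semi-Fredholm and right semi-Fredholm. By Corollary~\ref{EC1}, $A\in\FF_l(\vr)$ iff $\pi(A)$ is left invertible in the Calkin algebra $\B(\vr)/\B_0(\vr)$, and $A\in\FF_r(\vr)$ iff $\pi(A)$ is right invertible there. Hence $A$ is Fredholm iff $\pi(A)$ is simultaneously left and right invertible in $\B(\vr)/\B_0(\vr)$.

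To close the argument I would use that $\B(\vr)/\B_0(\vr)$ is a unital associative algebra, established in the paragraphs preceding Theorem~\ref{ET2} (via Theorem~\ref{ET1}, which says $\B_0(\vr)$ is a closed biideal). So if $\pi(S_1)\pi(A)=\pi(\Iop)$ and $\pi(A)\pi(S_2)=\pi(\Iop)$, then $\pi(S_1)=\pi(S_1)\pi(\Iop)=\pi(S_1)\pi(A)\pi(S_2)=\pi(\Iop)\pi(S_2)=\pi(S_2)$, and this common element is a two-sided inverse of $\pi(A)$; conversely an invertible element is trivially both left and right invertible. Combining the two displays gives the equivalence: $A$ is Fredholm iff $\pi(A)$ is invertible in $\B(\vr)/\B_0(\vr)$.

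Since every ingredient has already been proven, there is no real obstacle; the statement is essentially a repackaging. The only point deserving care is that the ``left inverse equals right inverse'' step relies on associativity and on the existence of the unit $[\Iop]$ in the Calkin algebra, so one must cite that $\B_0(\vr)$ is a closed two-sided ideal (Theorem~\ref{ET1}). An equivalent route, bypassing Corollary~\ref{EC1}, is to apply Theorem~\ref{FT3} directly: Fredholmness of $A$ yields $S_1,S_2\in\B(\vr)$ with $S_1A=\Iop+K_1$ and $AS_2=\Iop+K_2$, \ie $\pi(S_1)\pi(A)=\pi(A)\pi(S_2)=\pi(\Iop)$, whence $\pi(A)$ is invertible by the same one-line computation; and if $\pi(A)$ is invertible with inverse $\pi(S)$, then $SA-\Iop,\ AS-\Iop\in\B_0(\vr)$, which is exactly the hypothesis of the converse half of Theorem~\ref{FT3}, so $A$ is Fredholm.
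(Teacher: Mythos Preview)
Your proposal is correct and follows essentially the same approach as the paper: the paper's proof also reduces the statement to Corollary~\ref{EC1} together with the fact that Fredholm means left and right semi-Fredholm (Remark~\ref{FR4}(c)) and that invertibility in the unital Calkin algebra is equivalent to simultaneous left and right invertibility. Your write-up is more explicit about the ``left inverse equals right inverse'' computation, but the underlying argument is identical.
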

\begin{proof}
$A\in\B(\vr)$ is Fredholm if and only if $A$ is left and right semi-Fredholm. Further, $\pi(A)$ is invertible in $B(\vr)/\B_0(\vr)$ if and only if $\pi(A)$ is both left and right invertible in $B(\vr)/\B_0(\vr)$. Therefore, from corollary \ref{EC1}, we have the desired result.
\end{proof}
\begin{proposition}\label{EP2}
Let $A\in\B(\vr)$, then $\sel(A)$ and $\ser(A)$ are closed subsets of $\quat$.
\end{proposition}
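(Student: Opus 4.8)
The plan is to show that $\quat\setminus\sel(A)$ and $\quat\setminus\ser(A)$ are open. By Proposition~\ref{EP1}, $\quat\setminus\sel(A)=\{\qu\in\quat~|~R_\qu(A)\in\FF_l(\vr)\}$, so I would fix $\qu_0$ in this set and produce a ball $B_\eps(\qu_0)$ still contained in it. Since $R_{\qu_0}(A)\in\FF_l(\vr)$, Theorem~\ref{ET2} supplies $S\in\B(\vr)$ and $K\in\B_0(\vr)$ with $SR_{\qu_0}(A)=\Iop+K$.

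Next I would invoke the elementary perturbation estimate already used in the proof of Theorem~\ref{T1}. For any $\qu\in\quat$,
$$R_\qu(A)-R_{\qu_0}(A)=(|\qu|^2-|\qu_0|^2)\Iop-2(\text{Re}(\qu)-\text{Re}(\qu_0))A,$$
hence
$$\|SR_\qu(A)-(\Iop+K)\|=\|S(R_\qu(A)-R_{\qu_0}(A))\|\leq\|S\|\big(\,\big||\qu|^2-|\qu_0|^2\big|+2|\text{Re}(\qu)-\text{Re}(\qu_0)|\,\|A\|\,\big),$$
and the right-hand side tends to $0$ as $\qu\to\qu_0$. By Theorem~\ref{FT1} the operator $\Iop+K$ is Fredholm, so Theorem~\ref{FT5} yields a constant $c>0$ such that $(\Iop+K)+T$ is Fredholm whenever $T\in\B(\vr)$ satisfies $\|T\|<c$. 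Choosing $\eps>0$ so small that the displayed bound is $<c$ for every $\qu\in B_\eps(\qu_0)$, it follows that $SR_\qu(A)=(\Iop+K)+S(R_\qu(A)-R_{\qu_0}(A))$ is Fredholm, and in particular left semi-Fredholm, for all $\qu\in B_\eps(\qu_0)$.

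It then remains to pass from $SR_\qu(A)$ back to $R_\qu(A)$. For $\qu\in B_\eps(\qu_0)$, since $SR_\qu(A)\in\FF_l(\vr)$, Theorem~\ref{ET2} furnishes $S'\in\B(\vr)$ and $K'\in\B_0(\vr)$ with $S'(SR_\qu(A))=\Iop+K'$; reading this as $(S'S)R_\qu(A)=\Iop+K'$ and applying Theorem~\ref{ET2} once more gives $R_\qu(A)\in\FF_l(\vr)$, i.e.\ $\qu\notin\sel(A)$. Thus $B_\eps(\qu_0)\subseteq\quat\setminus\sel(A)$, so $\sel(A)$ is closed. For $\ser(A)$ one runs the mirror argument, starting from $R_{\qu_0}(A)S=\Iop+K$ and the right-handed analogue of Theorem~\ref{ET2} recorded in Corollary~\ref{EC1}, obtaining $R_\qu(A)S$ Fredholm and then $R_\qu(A)\in\FF_r(\vr)$ for $\qu$ near $\qu_0$; alternatively, since $\B_0(\vr)$ is closed under adjunction (Theorem~\ref{ET1}) the Calkin algebra is a $C^*$-algebra with $\pi(B)^\dagger=\pi(B^\dagger)$, and $R_\qu(A)^\dagger=R_\qu(A^\dagger)$ because $R_\qu$ has real coefficients, so $\ser(A)=\sel(A^\dagger)$ and closedness follows from the case already done.

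\textbf{Main obstacle.} The only delicate point is bookkeeping: Theorem~\ref{FT5} is stated for Fredholm (two-sided semi-Fredholm) operators rather than for left semi-Fredholm ones, so $R_\qu(A)$ cannot simply be perturbed directly. One must route the perturbation through the genuinely Fredholm operator $\Iop+K$ and the product $SR_\qu(A)$, and then cancel the extra factor $S$ at the level of semi-Fredholm operators via Theorem~\ref{ET2} rather than through an inverse or an index computation. Once this routing is arranged, every remaining step is routine.
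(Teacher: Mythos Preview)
Your argument is correct, but it takes a different route from the paper's. The paper's proof is essentially one line: since $\sel(A)=\sigma_l^S(\pi(A))$ and $\ser(A)=\sigma_r^S(\pi(A))$ by Definition~\ref{ED1}, and since Theorems~\ref{T1} and~\ref{T2} show that $\sigma_l^S=\apo$ and $\sigma_r^S=\apo(\,\cdot\,^\dagger)^*$ are closed in the unital algebra $\B(\vr)$, the same argument applied in the unital Calkin algebra $\B(\vr)/\B_0(\vr)$ gives closedness of $\sel(A)$ and $\ser(A)$. In other words, the paper appeals to the abstract Banach-algebra fact that left/right invertible elements form an open set, invoking it for $\pi(A)$ rather than for $A$.

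Your approach instead stays entirely inside $\B(\vr)$: you use the Fredholm characterization of Proposition~\ref{EP1}, the small-norm stability Theorem~\ref{FT5} applied to the genuinely Fredholm operator $\Iop+K$, and then peel off the factor $S$ via Theorem~\ref{ET2}. This is more hands-on but also more self-contained, since it avoids the need to check that the Hilbert-space proofs of Theorems~\ref{T1}--\ref{T2} transfer verbatim to the quotient algebra (where there are no unit vectors $\phi_n$ to speak of). Your observation that one must route the perturbation through $\Iop+K$ rather than perturb $R_{\qu_0}(A)$ directly---because Theorem~\ref{FT5} is stated only for two-sided Fredholm operators---is exactly the right diagnosis, and your fix via the associativity $(S'S)R_\qu(A)=\Iop+K'$ is clean. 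The alternative route for $\ser(A)$ via $\ser(A)=\sel(A^\dagger)$ is also fine and is in fact recorded later as Corollary~\ref{EC4}.
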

\begin{proof}
In the unital algebra $\B(\vr)$, by theorems \ref{T1} and \ref{T2}, $\sigma_l^S(A)=\apo(A)$ and $\sigma_r^S(A)=\overline{\apo(A^\dagger)}$ are closed subsets of $\quat$. By the same argument, in the Calkin algebra $B(\vr)/\B_0(\vr)$, $\sel(A)=\sigma_l^S(\pi(A))$ and $\ser(A)=\sigma_r^S(\pi(A))$ are closed subsets of $\quat$.
\end{proof}
\begin{proposition}\label{EP3}
Let $A\in\B(\vr)$, then
\begin{eqnarray}
\sel(A)&=&\{\qu\in\quat~~|~~\ra(R_\qu(A))~\text{is not closed or}~~\dim(\kr(R_\qu(A)))=\infty\}.\label{EE4}\\
\ser(A)&=&\{\qu\in\quat~~|~~\ra(R_\qu(A))~\text{is not closed or}~~\dim(\kr(R_{\oqu}(A^\dagger)))=\infty\}\label{EE5}.
\end{eqnarray}
\end{proposition}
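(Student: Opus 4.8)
The plan is to obtain both identities directly from the Fredholm characterizations already in hand, with no new analysis required: Proposition \ref{EP1} expresses $\sel$ and $\ser$ as the sets of $\qu$ for which $R_\qu(A)$ fails to be left, respectively right, semi-Fredholm, and Theorem \ref{FT6} describes those failures intrinsically in terms of the range and kernel.

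First I would note that $R_\qu(A)\in\B(\vr)$ for every $\qu\in\quat$, since $A^2$, $A$ and $\Iop$ are bounded and $2\,\text{Re}(\qu)$, $|\qu|^2$ are real numbers. By Proposition \ref{EP1}, $\qu\in\sel(A)$ exactly when $R_\qu(A)\in\B(\vr)\setminus\FF_l(\vr)$. Applying the characterization of $\FF_l(\vr)$ from Theorem \ref{FT6} to the operator $B=R_\qu(A)$, one has $B\notin\FF_l(\vr)$ precisely when $\ra(B)$ is not closed or $\dim(\kr(B))=\infty$; substituting $B=R_\qu(A)$ yields equation \ref{EE4}. For equation \ref{EE5} the same argument applied to the second formula of Theorem \ref{FT6} gives that $\qu\in\ser(A)$ exactly when $\ra(R_\qu(A))$ is not closed or $\dim(\kr(R_\qu(A)^\dagger))=\infty$, so the only remaining point is to replace $R_\qu(A)^\dagger$ by $R_{\oqu}(A^\dagger)$.

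That replacement is a one-line computation: since $R_\qu(A)=A^2-2\,\text{Re}(\qu)A+|\qu|^2\Iop$ carries only real scalar coefficients, taking adjoints term by term and using $(A^2)^\dagger=(A^\dagger)^2$, $(rA)^\dagger=rA^\dagger$ for $r\in\R$, $\Iop^\dagger=\Iop$, together with $\text{Re}(\oqu)=\text{Re}(\qu)$ and $|\oqu|=|\qu|$, gives $R_\qu(A)^\dagger=(A^\dagger)^2-2\,\text{Re}(\oqu)A^\dagger+|\oqu|^2\Iop=R_{\oqu}(A^\dagger)$. Hence $\dim(\kr(R_\qu(A)^\dagger))=\dim(\kr(R_{\oqu}(A^\dagger)))$, and equation \ref{EE5} follows.

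I do not expect any genuine obstacle here: the statement is essentially a transcription of the definition of $\sel$ and $\ser$ as $S$-spectra in the Calkin algebra through the Atkinson-type results already established, and the only step demanding a moment's care — the adjoint identity for the pseudo-resolvent — is routine precisely because $R_\qu(A)$ involves only real coefficients, so neither the non-commutativity of $\quat$ nor the chosen left multiplication enters.
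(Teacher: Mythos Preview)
Your proof is correct and follows essentially the same approach as the paper: apply Proposition \ref{EP1} to identify $\sel(A)$ and $\ser(A)$ with the failure of $R_\qu(A)$ to be left/right semi-Fredholm, then invoke Theorem \ref{FT6}. The paper's proof is in fact terser---it disposes of \ref{EE5} with ``In the same way''---so your explicit verification that $R_\qu(A)^\dagger=R_{\oqu}(A^\dagger)$ simply fills in a detail the paper leaves implicit.
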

\begin{proof}
Let $A\in\B(\vr)$. By proposition \ref{EP1}, $\qu\in\sel(A)$ if and only if $R_\qu(A)$ is not left semi-Fredholm. By theorem \ref{FT6}, $R_\qu(A)$ is not left semi-Fredholm if and only if $\ra(R_\qu(A))$ is not closed or $\dim(\kr(R_\qu(A)))=\infty$. Hence we have equation \ref{EE4}. In the same way equation \ref{EE5} can be obtained.
\end{proof}
\begin{corollary}\label{EC4}
Let $A\in\B(\vr)$, then
$$\sel(A)=\ser(A^\dagger),\quad \ser(A)=\sel(A^\dagger)\quad\text{and hence}\quad\se(A)=\se(A^\dagger).$$
\end{corollary}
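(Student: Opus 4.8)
The plan is to derive both identities directly from the range/kernel descriptions of the one-sided essential $S$-spectra in Proposition \ref{EP3}, together with the closed-range duality of Proposition \ref{NP1}.

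The first step is to record two elementary algebraic facts about the pseudo-resolvent. Since $R_\qu(A)=A^2-2\,\text{Re}(\qu)A+|\qu|^2\Iop$ has real coefficients and $\text{Re}(\oqu)=\text{Re}(\qu)$, $|\oqu|=|\qu|$, we have $R_{\oqu}(A)=R_\qu(A)$; and since adjunction is additive, real-homogeneous, and satisfies $(A^2)^\dagger=(A^\dagger)^2$, we have $(R_\qu(A))^\dagger=R_{\oqu}(A^\dagger)=R_\qu(A^\dagger)$, and hence $(R_\qu(A))^{\dagger\dagger}=R_\qu(A)$. These identities are what let me pass freely between $A$ and $A^\dagger$ and between $\qu$ and $\oqu$ inside the formulas of Proposition \ref{EP3}.

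The second step is the substitution itself. By Proposition \ref{EP3}, $\sel(A)$ is the set of $\qu\in\quat$ for which $\ra(R_\qu(A))$ is not closed or $\dim(\kr(R_\qu(A)))=\infty$. Substituting $A^\dagger$ for $A$ in the $\ser$-formula of Proposition \ref{EP3} and using $(A^\dagger)^\dagger=A$ together with $R_{\oqu}(A)=R_\qu(A)$, I see that $\ser(A^\dagger)$ is the set of $\qu\in\quat$ for which $\ra(R_\qu(A^\dagger))$ is not closed or $\dim(\kr(R_\qu(A)))=\infty$. The kernel conditions coincide verbatim; the range conditions coincide because $R_\qu(A^\dagger)=(R_\qu(A))^\dagger$ and Proposition \ref{NP1} asserts that a bounded operator has closed range if and only if its adjoint does. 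Hence $\sel(A)=\ser(A^\dagger)$. Applying this identity with $A^\dagger$ in place of $A$ gives $\ser(A)=\sel(A^\dagger)$, and then $\se(A)=\sel(A)\cup\ser(A)=\ser(A^\dagger)\cup\sel(A^\dagger)=\se(A^\dagger)$ by Definition \ref{ED1}.

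I expect no genuine obstacle here: the argument is essentially one substitution into Proposition \ref{EP3} plus one invocation of Proposition \ref{NP1}. The only point that needs care is the bookkeeping of conjugates — in particular that the $\ser$-formula carries $\kr(R_{\oqu}(A^\dagger))$ rather than $\kr(R_\qu(A^\dagger))$ — so that, after the substitution $A\mapsto A^\dagger$, the kernel conditions line up exactly.
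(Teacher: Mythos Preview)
Your proof is correct and follows essentially the same approach as the paper: both arguments reduce the identities to the range/kernel characterizations of Proposition \ref{EP3}, use $R_{\oqu}(A)=R_\qu(A)$ and $(R_\qu(A))^\dagger=R_\qu(A^\dagger)$, and invoke Proposition \ref{NP1} for the closed-range equivalence. Your write-up is more explicit about the algebraic bookkeeping, but the underlying argument is the same.
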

\begin{proof}Since $R_\qu(A)=R_{\oqu}(A)$, by proposition \ref{NP1}, $\ra(R_\qu(A))$ is closed if and only if $\ra(R_{\oqu}(A^\dagger))$ is closed, and $\se(A)=\ser(A)\cup\sel(A)$, proposition \ref{EP3} conclude the results.
\end{proof}
\begin{remark}\label{ER1}
If $\dim(\vr)<\infty$, then all operators in $\B(\vr)$ are finite rank operators and hence compact. Therefore, the Calkin algebra $B(\vr)/\B_0(\vr)$ is null. Hence, in this case, $\se(A)=\emptyset$.
\end{remark}
\begin{proposition}\label{EP4}
For $A\in\B(\vr)$, $\se(A)\not=\emptyset$ if and only if $\dim(\vr)=\infty.$
\end{proposition}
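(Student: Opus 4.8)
The plan is to treat the two implications separately: the ``only if'' part is immediate from Remark~\ref{ER1}, while the ``if'' part is obtained by exploiting the Banach-algebra structure of the Calkin algebra together with the non-emptiness of the $S$-spectrum. For the implication $\dim(\vr)<\infty \Rightarrow \se(A)=\emptyset$ (the ``only if'' part, taken contrapositively), note that when $\dim(\vr)<\infty$ every operator in $\B(\vr)$ has finite-dimensional range, hence is finite rank, hence compact by Theorem~\ref{PD22}(a); thus $\B_0(\vr)=\B(\vr)$, the Calkin algebra $\B(\vr)/\B_0(\vr)$ is the zero algebra, $\pi(A)=[\Iop]$ is trivially invertible there, and so $\se(A)=\sigma_S(\pi(A))=\emptyset$. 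This is precisely the content of Remark~\ref{ER1}.

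For the implication $\dim(\vr)=\infty \Rightarrow \se(A)\ne\emptyset$, the first step is to check that in this case the Calkin algebra is non-trivial, i.e.\ $[\Iop]\ne[0]$, which amounts to $\Iop\notin\B_0(\vr)$. Indeed, by Proposition~\ref{Com}(b) the closed unit ball $U$ of $\vr$ fails to be compact in infinite dimensions (an infinite-dimensional Hilbert space carries an orthonormal sequence with no convergent subsequence, as used in the proof of Theorem~\ref{FT1}), so $\overline{\Iop(U)}=U$ is not compact and $\Iop$ is not a compact operator. By Theorems~\ref{ET0} and~\ref{ET1}, $\B(\vr)/\B_0(\vr)$ is then a unital two-sided quaternionic Banach $C^*$-algebra with $1\ne 0$. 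I would then invoke the non-emptiness of the $S$-spectrum for elements of such an algebra: the Liouville-type argument proving the non-emptiness assertion of Proposition~\ref{PP1} (power-series expansion of the pseudo-resolvent $R_\qu$ for $|\qu|$ large, together with slice-regularity of the map $\qu\mapsto R_\qu(\cdot)^{-1}$) uses only the Banach-algebra axioms, so it applies to the element $\pi(A)$ of the Calkin algebra and yields $\se(A)=\sigma_S(\pi(A))\ne\emptyset$. Compactness of $\se(A)$ is already recorded in Definition~\ref{ED1}, so only non-emptiness is at stake.

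The main obstacle is this last step: strictly speaking Proposition~\ref{PP1} is stated only for the operator algebra $\B(\vr)$, so one must either observe explicitly that its proof of non-emptiness is purely Banach-algebraic, or else avoid the issue by representing the Calkin $C^*$-algebra faithfully on a non-zero quaternionic Hilbert space $\mathcal K$ via a quaternionic Gelfand--Naimark theorem, thereby reducing $\sigma_S(\pi(A))$ to the $S$-spectrum of an operator on $\mathcal K$, to which Proposition~\ref{PP1} applies directly. The first route is lighter, and it is the one I would take; the second is available as a fallback if one prefers not to re-examine the proof of Proposition~\ref{PP1}.
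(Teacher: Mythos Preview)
Your proposal is correct and follows essentially the same route as the paper: the finite-dimensional direction via Remark~\ref{ER1}, and the infinite-dimensional direction by appealing to the non-emptiness of the $S$-spectrum (Proposition~\ref{PP1}) applied to $\pi(A)$ in the Calkin algebra. Your additional care in verifying $[\Iop]\ne[0]$ and in flagging that Proposition~\ref{PP1} is literally stated only for $\B(\vr)$ is more scrupulous than the paper itself, which simply asserts that ``the $S$-spectrum on a unital Banach algebra is not empty'' without further comment.
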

\begin{proof}
($\Leftarrow$) Suppose $\dim(\vr)=\infty$. $\B(\vr)/\B_0(\vr)$ is a unital Banach algebra. By proposition \ref{PP1}, the $S$-spectrum on a unital Banach algebra is not empty. Therefore $\se(A)=\sigma_S(\pi(A))\not=\emptyset.$\\
($\Rightarrow$) By remark \ref{ER1}, $\dim(\vr)<\infty$ implies $\se(A)=\emptyset.$ That is, $\se(A)\not=\emptyset$ implies $ \dim(\vr)=\infty.$
\end{proof}
\begin{proposition}\label{EP5}
For every $A\in\B(\vr)$ and $K\in\B_0(\vr)$, we have $\se(A+K)=\se(A)$. In the same way, $\sel(A+K)=\sel(A)$ and $\ser(A+K)=\ser(A)$.
\end{proposition}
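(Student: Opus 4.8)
The plan is to push everything through the quotient homomorphism $\pi\colon\B(\vr)\to\B(\vr)/\B_0(\vr)$ and to use that its kernel is exactly $\B_0(\vr)$. Since $K\in\B_0(\vr)$ we have $\pi(K)=[0]$, and because $\pi$ is additive, $\pi(A+K)=\pi(A)+\pi(K)=\pi(A)$ as elements of the Calkin algebra. By Definition \ref{ED1} the essential $S$-spectrum, the left essential $S$-spectrum, and the right essential $S$-spectrum of an operator are, respectively, $\sigma_S$, $\sigma_l^S$, and $\sigma_r^S$ of its image under $\pi$; since $A+K$ and $A$ have the same $\pi$-image, all three coincide:
\[
\se(A+K)=\sigma_S(\pi(A+K))=\sigma_S(\pi(A))=\se(A),
\]
and likewise $\sel(A+K)=\sel(A)$ and $\ser(A+K)=\ser(A)$. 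This is the whole argument; nothing further is needed once the homomorphism property of $\pi$ (recorded just before Theorem \ref{ET2}) and $\kr(\pi)=\B_0(\vr)$ are in hand.

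For readers who prefer an argument phrased directly in terms of Fredholm operators, I would give the following alternative. Expanding the pseudo-resolvent,
\[
R_\qu(A+K)=(A+K)^2-2\text{Re}(\qu)(A+K)+|\qu|^2\Iop=R_\qu(A)+\bigl(AK+KA+K^2-2\text{Re}(\qu)K\bigr),
\]
and by Proposition \ref{com1} the correction term $AK+KA+K^2-2\text{Re}(\qu)K$ is compact (products of bounded operators with compact operators are compact, and $\B_0(\vr)$ is closed under addition). Hence, by Theorem \ref{FT4}, $R_\qu(A)\in\FF(\vr)$ if and only if $R_\qu(A+K)\in\FF(\vr)$, so Atkinson's theorem (Corollary \ref{EC2}) yields $\se(A+K)=\se(A)$. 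For the one-sided statements one uses instead the semi-Fredholm characterization of Theorem \ref{FT6} together with the fact --- immediate from Theorem \ref{ET2} --- that $\FF_l(\vr)$ and $\FF_r(\vr)$ are stable under compact perturbations, combined with Proposition \ref{EP1}.

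I do not expect a genuine obstacle here: the content is simply that the pseudo-resolvent $R_\qu(\cdot)$ is a polynomial in the operator with \emph{real} coefficients, so $R_\qu(A+K)$ and $R_\qu(A)$ differ by a compact operator, and the essential $S$-spectrum by construction depends only on the class of the operator in the Calkin algebra. The only point requiring a line of care is verifying compactness of the cross terms $AK$, $KA$, $K^2$, which is covered verbatim by Proposition \ref{com1}(d) and the vector-space structure of $\B_0(\vr)$. I would present the Calkin-algebra version as the main proof, since it is shortest and treats $\se$, $\sel$, $\ser$ uniformly.
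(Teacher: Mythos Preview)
Your main proof is correct and matches the paper's proof exactly: both observe that $\pi(A+K)=\pi(A)$ since $\pi$ is a homomorphism with kernel $\B_0(\vr)$, and then invoke Definition~\ref{ED1} to conclude immediately for $\se$, $\sel$, and $\ser$. Your alternative Fredholm-theoretic argument is also correct but is additional content not present in the paper.
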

\begin{proof}
For every $A\in\B(\vr)$ and $K\in\B_0(\vr)$ we have $\pi(A+K)=\pi(A)$ in $\B(\vr)/B_0(\vr)$. Therefore by the definition, we get
$$\se(A+K)=\sigma_S(\pi(A+K))=\sigma_S(\pi(A))=\se(A).$$
The other two equalities follow in the same way.
\end{proof}
\begin{proposition}\label{EP6}
Let $A\in\B(\vr)$. Then,
$$\sel(A)\subseteq\sigma_l^S(A)\quad\text{and}\quad\ser(A)\subseteq\sigma_r^S(A),$$
and hence $\se(A)\subseteq\sigma_S(A)$.
\end{proposition}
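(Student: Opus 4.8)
The plan is to push the left/right invertibility of the pseudo-resolvent through the quotient map $\pi$. First I would record the basic compatibility: since $\pi\colon\B(\vr)\longrightarrow\B(\vr)/\B_0(\vr)$ is a unital algebra homomorphism and $R_\qu$ has real coefficients, one has $\pi(R_\qu(A))=R_\qu(\pi(A))$ for every $\qu\in\quat$. Hence, by Definition \ref{ED1}, $\qu\in\sel(A)$ exactly when $\pi(R_\qu(A))$ fails to be left invertible in the Calkin algebra, and $\qu\in\ser(A)$ exactly when it fails to be right invertible.

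Next I would prove the left inclusion by contraposition. Suppose $\qu\notin\sigma_l^S(A)$, so $R_\qu(A)$ is left invertible in $\B(\vr)$, i.e. there is $S\in\B(\vr)$ with $SR_\qu(A)=\Iop$. Applying $\pi$ yields $\pi(S)\pi(R_\qu(A))=\pi(\Iop)$, the unit of $\B(\vr)/\B_0(\vr)$, so $\pi(R_\qu(A))$ is left invertible and $\qu\notin\sel(A)$. This gives $\sel(A)\subseteq\sigma_l^S(A)$. Repeating the argument with "left" replaced by "right" throughout (or, equivalently, applying the already-proven left inclusion to $A^\dagger$ and invoking Corollary \ref{EC4} together with $\sigma_r^S(A)=\sigma_l^S(A^\dagger)$) yields $\ser(A)\subseteq\sigma_r^S(A)$.

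For the final inclusion I would use that in a unital algebra an element is invertible if and only if it is both left and right invertible (if $ST=\Iop$ and $TS'=\Iop$ then $S=S(TS')=(ST)S'=S'$), so $\sigma_S(A)=\sigma_l^S(A)\cup\sigma_r^S(A)$. Combining this with $\se(A)=\sel(A)\cup\ser(A)$ from Definition \ref{ED1} and the two inclusions just established gives $\se(A)\subseteq\sigma_S(A)$.

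I do not expect a genuine obstacle; the only point needing a moment's care is the identity $\pi(R_\qu(A))=R_\qu(\pi(A))$, i.e. that forming the pseudo-resolvent commutes with passing to the Calkin algebra, which is immediate once one notes $\pi$ preserves sums, products, real scalar multiples and the unit. As an alternative route one could avoid the algebra language entirely and argue directly from the operator-theoretic descriptions: Proposition \ref{SP1} characterises $\sigma_l^S(A)$ by the condition ``$\ra(R_\qu(A))$ not closed or $\kr(R_\qu(A))\neq\{0\}$'', whereas Proposition \ref{EP3} characterises $\sel(A)$ by ``$\ra(R_\qu(A))$ not closed or $\dim(\kr(R_\qu(A)))=\infty$'', and the latter trivially implies the former; the analogous comparison (using $\kr(R_{\oqu}(A^\dagger))$) handles the right-hand versions, and the union gives the $\se$ statement.
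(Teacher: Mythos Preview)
Your proof is correct. Your primary argument---pushing left/right inverses of $R_\qu(A)$ through the unital homomorphism $\pi$---is a different route from the paper's, which simply invokes Propositions~\ref{SP1} and~\ref{EP3} and reads off the inclusion by comparing the two set descriptions (this is precisely the ``alternative route'' you sketch at the end). Your approach is more self-contained: it needs only the definition of $\sel,\ser$ as $\sigma_l^S(\pi(A)),\sigma_r^S(\pi(A))$ and the fact that $\pi$ preserves products and the unit, so it avoids the structural characterisation of Proposition~\ref{EP3} (which in turn rests on Theorem~\ref{FT6}). The paper's route is terser because those characterisations are already available, and it makes the inclusion transparently the implication ``$\dim(\kr(R_\qu(A)))=\infty\Rightarrow\kr(R_\qu(A))\neq\{0\}$''.
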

\begin{proof}
It is straightforward from proposition \ref{SP1} and \ref{EP3}.
\end{proof}
\begin{definition}\label{ED2}
Let $A\in\B(\vr)$ and $k\in\Z\setminus\{0\}$. Define,
$$\sigma_k^S(A)=\{\qu\in\quat~~|~~R_\qu(A)\in\FF(\vr)\quad\text{and}\quad \ind(R_\qu(A))=k\}.$$
Also
$$\sigma_0^S=\{\qu\in\sigma_S(A)~~|~~R_\qu(A)\in\Wy(\vr)\}.$$
\end{definition}
\begin{proposition}\label{EP7}
Let $A\in\B(\vr)$, then $\displaystyle\sigma_S(A)=\se(A)\cup\bigcup_{k\in\Z}\sigma_k^S(A).$
\end{proposition}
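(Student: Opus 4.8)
The plan is to prove the two inclusions separately, organizing the argument as a case analysis keyed to whether $R_\qu(A)$ is Fredholm and, when it is, to the value of $\ind(R_\qu(A))$.

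First I would establish $\se(A)\cup\bigcup_{k\in\Z}\sigma_k^S(A)\subseteq\sigma_S(A)$. Take $\qu$ in the right-hand side and distinguish three cases. If $\qu\in\se(A)$, then by Atkinson's theorem (Corollary \ref{EC2}) $R_\qu(A)\notin\FF(\vr)$; since every invertible operator is Fredholm of index zero (Corollary \ref{CI1}), $R_\qu(A)$ cannot be invertible, so $\qu\in\sigma_S(A)$. If $\qu\in\sigma_k^S(A)$ for some $k\neq 0$, then $R_\qu(A)\in\FF(\vr)$ with $\ind(R_\qu(A))=k\neq 0$, and again by Corollary \ref{CI1} an invertible operator would have index zero; hence $R_\qu(A)$ is not invertible and $\qu\in\sigma_S(A)$. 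Finally, if $\qu\in\sigma_0^S(A)$, then $\qu\in\sigma_S(A)$ directly from Definition \ref{ED2}.

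For the reverse inclusion $\sigma_S(A)\subseteq\se(A)\cup\bigcup_{k\in\Z}\sigma_k^S(A)$, take $\qu\in\sigma_S(A)$, so $R_\qu(A)\in\B(\vr)$ is not invertible. Either $R_\qu(A)$ is not Fredholm, in which case $\qu\in\se(A)$ by Corollary \ref{EC2}; or $R_\qu(A)\in\FF(\vr)$, in which case put $k=\ind(R_\qu(A))\in\Z$. If $k\neq 0$ then $\qu\in\sigma_k^S(A)$ by Definition \ref{ED2}. If $k=0$ then $R_\qu(A)\in\Wy(\vr)$ (Remark \ref{ER2}(a)), and since we are assuming $\qu\in\sigma_S(A)$ this is exactly the defining condition for $\qu\in\sigma_0^S(A)$. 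In every case $\qu$ lies in the right-hand side, which finishes the proof.

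The argument is pure bookkeeping once Atkinson's theorem is in place, so I do not expect a genuine obstacle. The one point needing a little care is the index-zero case: $\sigma_0^S(A)$ carries the extra constraint $\qu\in\sigma_S(A)$ — a Weyl operator can perfectly well be invertible — so one must carry the non-invertibility hypothesis through the $k=0$ branch rather than trying to describe $\sigma_0^S(A)$ by Fredholm data alone. It is also worth noting, as a remark accompanying the statement, that the displayed union is in fact disjoint: $\se(A)$ is disjoint from each $\sigma_k^S(A)$ because being Fredholm excludes its negation, and the $\sigma_k^S(A)$ are pairwise disjoint because the index is single-valued, so the identity actually presents a partition of $\sigma_S(A)$.
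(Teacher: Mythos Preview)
Your proof is correct and follows essentially the same approach as the paper: a case analysis on whether $R_\qu(A)$ is Fredholm, and when it is, on the value of its index, together with the observation that $\sigma_0^S(A)$ already carries the constraint $\qu\in\sigma_S(A)$. The only cosmetic difference is that the paper handles the $k\neq 0$ case by splitting into $k>0$ and $k<0$ and arguing directly via kernel dimensions, whereas you invoke Corollary~\ref{CI1} uniformly; the two justifications are equivalent, and the paper likewise records the disjointness of the decomposition.
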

\begin{proof}
Clearly the family $\{\sigma_k^S(A)\}_{k\in\Z}$ is pair-wise disjoint. Let $0\not=k\in\Z$ and $\qu\in\sigma_k^S(A)$. If $k>0$, then $0<\ind(R_\qu(A))<\infty$, hence $0<\dim(\kr(R_\qu(A)))-\dim(\kr(R_\qu(A^\dagger)))<\infty$. Thus, $\ker(R_\qu(A))\not=\{0\}$ and therefore $q\in\sigma_S(A)$. If $k<0$, then $\dim(\kr(R_\qu(A)))<\dim(\kr(R_\qu(A^\dagger)))$. Hence $\kr(R_\qu(A^\dagger))\not=\{0\}$ and therefore $R_\qu(A^\dagger)$ is not invertible. Thus, by proposition \ref{NP2}, $R_\qu(A)$ is not invertible. That is, $\qu\in\sigma_S(A)$. Altogether we get
$$\bigcup_{k\in\Z}\sigma_k^S(A)=\{\qu\in\sigma_S(A)~~|~~R_{\qu}(A)\in\FF(\vr)\}.$$
Also, by proposition \ref{EP6}, we have, $\se(A)\subseteq\sigma_S(A)$ and by corollary \ref{EC2}, $\se(A)=\{\qu\in\sigma_S(A)~~|~~R_\qu(A)\not\in\FF(\vr)\}.$ Therefore,
$\displaystyle\sigma_S(A)=\se(A)\cup\bigcup_{k\in\Z}\sigma_k^S(A),$ clearly with $\displaystyle\se(A)\cap\bigcup_{k\in\Z}\sigma_k^S(A)=\emptyset$.
\end{proof}
\section{conclusion}
We have studied the approximate S-point spectrum, Fredholm operators and essential S-spectrum of a bounded right linear operator on a right quaternionic Hilbert space $\vr$. The left multiplication defined on $\vr$ does not play a role in the approximate S-point spectrum and  in the Fredholm theory. However, in getting a unital Calkin algebra it has appeared and hence played a role in the S-essential spectrum. Also an interesting characterization to the S-spectrum is given in terms of the Fredholm operators and its index.\\

In the complex case, the exact energy spectrum is obtainable and known only for a handful of Hamiltonians, and for the rest only approximate spectrum is computed numerically. In these approximate theories the Fredholm operators and the essential spectra are used frequently. Further, these theories are successfully used in transport operator, integral operator and differential operator problems \cite{Ben, Gar, Jeri, Jeri2, kub, con}. In the same manner, the theories developed in this note may be useful in obtaining the approximate S-point spectrum or essential S-spectrum for quaternionic Hamiltonians and their small norm perturbations.  For quaternionic Hamiltonians and potentials see \cite{Ad, Veto} and the references therein.
\section{Acknowledments}
K. Thirulogasanthar would like to thank the FRQNT, Fonds de la Recherche  Nature et  Technologies (Quebec, Canada) for partial financial support under the grant number 2017-CO-201915. Part of this work was done while he was visiting the University of Jaffna to which he expresses his thanks for the hospitality.

\end{document}